\documentclass[pdflatex,sn-mathphys-ay]{sn-jnl}

\usepackage{graphicx}%
\usepackage{multirow}%
\usepackage{amsmath,amssymb,amsfonts}%
\usepackage{amsthm}%
\usepackage[title]{appendix}%
\usepackage{xcolor}%
\usepackage{textcomp}%
\usepackage{manyfoot}%
\usepackage{booktabs}%
\usepackage{algorithm}%
\usepackage{algorithmicx}%
\usepackage{algpseudocode}%
\usepackage{listings}%

\newcommand{\Prob}{\mathsf{P}}
\newcommand{\Ex}{\mathsf{E}}
\newcommand{\Var}{\mathsf{D}}
\newcommand{\Real}{\mathbb{R}}
\newcommand{\ind}{\mathbb{I}}
\DeclareMathOperator{\cov}{cov}

\newenvironment{proofsketch}{\par\noindent\textit{Sketch of proof.}\ }{\qed\par}

\theoremstyle{thmstyleone}%
\newtheorem{theorem}{Theorem}
\newtheorem{proposition}[theorem]{Proposition}%

\theoremstyle{thmstyletwo}%
\newtheorem{remark}{Remark}%

\theoremstyle{thmstylethree}%

\newtheorem{lemma}[theorem]{Lemma}
\newtheorem{corollary}[theorem]{Corollary}%

\begin{document}

\title[Closed-form estimation under two-sided informative censorship]{Closed-form estimation of the exponential scale under two-sided informative random censorship}


\author*[1]{\fnm{Dilshod R.} \sur{Mansurov}}\email{dmansurovmath@gmail.com}

\author[2]{\fnm{Sukhrob B.} \sur{Bozorov}}\email{suxrobbek\_8912@mail.ru}

\author[1]{\fnm{Azizbek B.} \sur{Oltiyev}}\email{azizbekolot1993@gmail.com}

\affil*[1]{\orgdiv{Department of Mathematics}, \orgname{Navoi State University}, \orgaddress{\city{Navoi}, \postcode{210100}, \country{Uzbekistan}}}

\affil[2]{\orgdiv{Department of Mathematics}, \orgname{Gulistan State University}, \orgaddress{\city{Gulistan}, \postcode{120100}, \country{Uzbekistan}}}


\abstract{We estimate the scale of an exponential lifetime from observations censored
	randomly from both sides, under an informative model in which each censoring
	law is a power of the lifetime survival function -- a two-sided generalization
	of the Koziol--Green proportional hazards model. Substituting the empirical
	distribution function into the likelihood equation yields a closed-form
	pseudo-maximum-likelihood estimator of the scale that is computed in one pass
	over the order statistics
	and needs neither iteration nor numerical optimization. A digamma identity
	shows the estimating equation to be \emph{exactly}, not merely asymptotically,
	Fisher-consistent. Strong consistency and asymptotic normality are established
	with \emph{no restriction on the censoring depth}; the key is an identity that
	rewrites the underlying $L$-statistic without its unbounded score function and
	so removes the condition that classical limit theorems would impose. The influence
	function and the asymptotic variance are explicit in polygamma functions, so
	confidence intervals require no numerical integration. Against the information
	bound of the observed-data model the efficiency exceeds $98.6\%$ in the designs
	considered and stays above $96.2\%$ over a wide sweep. Only the entry half of
	the model is needed: the estimator remains exactly Fisher-consistent and
	strongly consistent whatever the right-censoring law, and exponentiality is
	shown to be the precise price of that freedom. The construction extends
	verbatim to a proportional hazards class with known baseline. Three data sets
	illustrate the procedure, including a cohort in which left censoring is
	genuine.}

\keywords{Two-sided random censorship, Informative censoring, Koziol--Green model, Closed-form estimator, $L$-statistics, Asymptotic efficiency}


\pacs[MSC Classification]{62N02, 62F12, 62N05}

\maketitle

\section{Introduction}

In biomedical follow-up and in engineering reliability testing the lifetime of
interest is often observed incompletely from \emph{both} sides. A unit enters
observation only at a random time $L$, and if the event of interest has already
occurred before that time we learn not its exact value but only that it fell
short of $L$; this is \emph{random censorship from the left}. At the same time
the follow-up may be terminated prematurely by a right-censoring variable $Y$
(loss to follow-up, withdrawal from the trial, the end of the study), which is
\emph{random censorship from the right}. Under censorship acting from both
sides the observable quantity is therefore
$Z=\max\{L,\min(X,Y)\}$, the lifetime $X$ being censored on two sides by the
random pair $(L,Y)$.

It matters to distinguish left censoring from \emph{left truncation} (delayed
entry). Under truncation a unit whose event precedes $L$ never enters the
sample at all and the observed law is conditioned on $\{V\ge L\}$. In the
present scheme such a unit remains in the sample and carries, through the
indicator $\delta^{(0)}=1$, the information that the event occurred before $L$;
consequently the distribution function (df) of $Z$ stays unconditional,
$H=KN$. This structure arises, for instance, when a unit is first inspected at
time $L$ and found to have already failed, or when a measurement falls below a
limit of detection. Left truncation, frequently together with interval
censoring, has a literature of its own \citep{Sh21}, and none of the results
below applies to it. Throughout, $L$, $X$ and $Y$ are assumed independent with
continuous dfs $K$, $F$ and $G$ respectively. Standard accounts of censored-data
methodology are given by \citet{KM03} and \citet{La03}.

A structure that has proved fruitful for censored data is the \emph{informative}
(or proportional hazards) model, in which the survival function of the censoring
variable is a power of the survival function of the lifetime. For right
censoring this is the model of \citet{KG76}, studied systematically by
\citet{Cs88, Cs89, CF98, CH81, CM88, GS99, HP89, Pa99, UGC00}. The resulting
power-type estimator of the survival function, often called the ACL
(Abdushukurov--Cheng--Lin) estimator, exploits the informativeness of the
censoring mechanism and is more accurate than the product-limit estimator when
the proportional hazards hypothesis holds \citep{Ab87, Cs88, CF98, Pa99}. Its
kernel-smoothed and density variants, and the bandwidth-selection question they
raise, are treated in \citet{ABM24, CM88, UGC00}. In the two-sided informative
model considered here, \emph{both} censoring laws are powers of the
corresponding survival functions. Such a model was studied by \citet{Ab94} and
extended to competing risks and regression by \citet{Ab98}. Informative
censoring remains an active subject: generalized Koziol--Green models with
partially informative censoring are treated in \citet{Br05}, sieve maximum
likelihood methods under informative censoring in \citet{CHS17}, and the joint
problem of variable selection and estimation in \citet{Liu26}.

The relation of the informative model to \emph{dependent censoring}, the
currently dominant approach, deserves emphasis. There the dependence between
the lifetime and the censoring time is modelled through a copula, and
identifiability is bought either by fixing the copula or by imposing parametric
conditions on the margins \citep{CV23, DV21, DV24, DV25, SKVM25}. The
informative model~\eqref{eq:model} proceeds the other way round: the censoring
laws are tied \emph{functionally} to the lifetime law. This makes the joint
distribution identifiable without any copula and, as Theorem~\ref{th:char}
records, leaves the structural condition \emph{testable} from the data. The
price is a stiffer specification, and it is exactly this trade-off that the
robustness analysis of Section~\ref{sec:robust} quantifies. Estimation from
doubly censored data in general, and the two-sided scheme studied here in
particular, has attracted renewed attention \citep{DGC24, WZH22}; there the
estimators are defined implicitly, through self-consistency or EM iterations,
whereas the estimator proposed below is explicit.

While the semiparametric theory of the two-sided model --- the characterization
theorem and the weak convergence of the associated empirical process --- is by
now well developed (\citealp{Ab94, AM23, Ma20}; see Section~\ref{sec:model}
below), the fully parametric case required in reliability practice has received
much less attention. \citet{AM23} prove the weak convergence of the
semiparametric empirical process (Theorem~\ref{th:wc} below) but go no further
than defining the parametric estimator and illustrating it numerically on a
narrow range of designs. The present paper fills that gap for the single most
important parametric family, the exponential, and supplies the exact Fisher
consistency of the estimating equation, strong consistency, the influence
function and the limit law, the asymptotic bias law of the naive estimator, the
robustness analysis of Section~\ref{sec:robust} and the applications of
Section~\ref{sec:data}. The estimator obtained here is shown to combine
properties rarely found together among procedures designed for doubly censored
data: it is available in closed form, it is exactly Fisher-consistent, it is
$\sqrt n$-consistent with an explicit Gaussian limit at every censoring depth,
and it degrades gracefully under misspecification.

The main results of the paper are the following.
\begin{enumerate}[(i)]
	\item A closed-form pseudo-MLE $\alpha_n$ of the exponential scale under
	two-sided informative censorship is constructed (Section~\ref{ssec:est}). It
	requires no iterative optimization: substituting the empirical df turns the
	fixed-point form of the likelihood equation into an explicit $L$-statistic.
	
	\item The exact Fisher consistency of the estimating equation is proved by
	means of a digamma identity (Theorem~\ref{lem:fisher}); strong consistency is
	established at every censoring depth (Theorem~\ref{th:cons}); and asymptotic
	normality is proved for \emph{all} $\beta,\theta>0$, with an explicit influence
	function whose constants are available in closed form through polygamma
	functions (Theorem~\ref{th:clt} and Corollary~\ref{cor:df}). The key to the
	proof is the identity of Lemma~\ref{lem:phi}, which rewrites the $L$-statistic
	without its unbounded score function $u^{-\lambda}$. The asymptotic variance is
	compared with the Fisher information bound of the model, so that the price of
	passing to closed form is measured (Section~\ref{ssec:eff}): the efficiency
	exceeds $98.6\%$ in the designs considered. It is proved, moreover, that the
	right-censoring half of the model is redundant for this estimator: consistency
	persists whatever the right-censoring law (Theorem~\ref{th:gfree}). The exact
	asymptotic bias of the naive estimator that ignores left censoring is also
	derived (Proposition~\ref{prop:naive}); remarkably, the relative bias
	$(1+\beta)\left[\psi(2+\beta)-\psi(1)\right]-1$ depends on the depth of left
	censoring alone. Finally the scope of the method is delimited
	(Section~\ref{ssec:general}): the likelihood equation is derived for an
	arbitrary family (Proposition~\ref{prop:general}), it admits an explicit
	solution precisely on a proportional hazards class
	(Propositions~\ref{prop:phclass} and~\ref{prop:converse}), and within that
	class all of the above results carry over unchanged.
	
	\item An extensive Monte Carlo study spanning sample sizes from small ($n=30$)
	to large ($n=5000$) and five censoring designs (Section~\ref{sec:mc})
	quantifies the bias, the root-mean-square error (RMSE) and the empirical rate
	of convergence, and documents a systematic efficiency gain over the
	semiparametric power-type estimator.
	
	\item A dedicated robustness analysis (Section~\ref{sec:robust}) examines
	(a)~departures of the lifetime law from exponentiality in the Weibull and gamma
	directions, for which the quasi-maximum-likelihood (pseudo-true parameter)
	apparatus of \citet{Wh82} is used; (b)~violation of the entry condition
	$K=N^{\beta}$, including its replacement by a law of an entirely different
	shape; and (c)~violation of the right-censoring condition
	$1-G=(1-F)^{\theta}$, the finite-sample mirror of Theorem~\ref{th:gfree}. All
	three effects are quantified analytically and numerically, and together they
	expose the asymmetry inside~\eqref{eq:model}: (c) produces no bias, whereas
	(b) does.
	
	\item Three applications to real data (Section~\ref{sec:data}): the classical
	6-MP leukaemia remission data of \citet{Fr63} and \citet{Ge65} illustrate the
	procedure at the boundary case $\beta=0$; the insulating-fluid breakdown data
	of \citet{Ne72} and \citet{La03} do so under artificially imposed two-sided
	informative censorship with known ground truth; and the AIDS data on
	intravenous drug users of \citet{JG11} and \citet{dblcens} --- the one example
	in which left censoring is \emph{genuine} --- demonstrate the practical value
	of Theorem~\ref{th:gfree} and of the extension of
	Section~\ref{ssec:general}.
\end{enumerate}

Section~\ref{sec:model} presents the model and condenses the underlying
asymptotic results; Section~\ref{sec:param} constructs the estimator and
supplies its asymptotic theory, its efficiency and its scope;
Section~\ref{sec:mc} reports the Monte Carlo study, Section~\ref{sec:robust}
the robustness analysis and Section~\ref{sec:data} the applications to real
data; Sections~\ref{sec:disc} and~\ref{sec:concl} contain the discussion and
the conclusions.

\section{The informative model of censorship from both sides}\label{sec:model}

\subsection{Characterization of the model and semiparametric estimation}

Let $\left\{(X_k,L_k,Y_k),\,k\ge1\right\}$ be a sequence of independent copies of
the triple $(X,L,Y)$ and let
\[
S^{(n)}=\left\{(Z_i,\Delta_i),\ i=1,\dots,n\right\}
\]
be the observed sample, where $Z_i=\max\{L_i,\min\{X_i,Y_i\}\}$ and
$\Delta_i=(\delta_i^{(0)},\delta_i^{(1)},\delta_i^{(2)})$ with
$\delta_i^{(0)}=\ind(\min(X_i,Y_i)<L_i)$, $\delta_i^{(1)}=\ind(L_i\le X_i<Y_i)$
and $\delta_i^{(2)}=\ind(L_i\le Y_i<X_i)$, $\ind(A)$ denoting the indicator of
the event $A$. The number of completely observed lifetimes in $S^{(n)}$ is
$\delta_1^{(1)}+\dots+\delta_n^{(1)}$. The statistical problem is to estimate
the df $F$ from $S^{(n)}$, the dfs $K$ and $G$ playing the role of nuisance
parameters. Write $H$ and $N$ for the dfs of $Z_i$ and of
$V_i=\min(X_i,Y_i)$ respectively. Then
\begin{equation}\label{eq:HKN}
	H(x)=N(x)\,K(x),\qquad 1-N(x)=(1-F(x))(1-G(x)),\qquad x\in\Real .
\end{equation}
The \emph{two-sided informative model} postulates the existence of positive
parameters $\theta,\beta$ such that, for every $x\in\Real$,
\begin{equation}\label{eq:model}
	\left\{
	\begin{array}{l}
		1-G(x)=(1-F(x))^{\theta},\\[2pt]
		K(x)=(N(x))^{\beta}.
	\end{array}
	\right.
\end{equation}
Here $\beta$ governs the depth of left censoring and $\theta$ that of right
censoring, values close to zero corresponding to light censoring. The
model~\eqref{eq:model} was studied by \citet{Ab94} and extended to competing
risks by \citet{Ab98}. It contains the Koziol--Green model
\citep{CH81, KG76} as the special case $\beta=0$ (that is $K\equiv1$: no left
censoring). Combining \eqref{eq:HKN} with \eqref{eq:model} gives
\begin{equation}\label{eq:Frepr}
	1-F(x)=\bigl[1-(H(x))^{\lambda}\bigr]^{\gamma},\qquad x\in\Real,
	\qquad
	\lambda=\frac{1}{1+\beta},\quad \gamma=\frac{1}{1+\theta},
\end{equation}
so that values of $\lambda$ and $\gamma$ close to one indicate light censoring.
Representation~\eqref{eq:Frepr} reduces the estimation of $F$ to that of the
triple $(H,\lambda,\gamma)$. Introduce now the subdistribution functions
$T^{(m)}(x)=\Prob(Z_i\le x,\ \delta_i^{(m)}=1)$, $m=0,1,2$, which satisfy
$T^{(0)}+T^{(1)}+T^{(2)}=H$. The following characterization, due to
\citet{Ab94}, is the basic structural property of the model. Beyond its
theoretical interest it makes the specification testable in practice, a fact we
exploit in Sections~\ref{ssec:kviol} and~\ref{sec:data}.

\begin{theorem}[\citealp{Ab94}]\label{th:char}
	The equalities \eqref{eq:model} hold if and only if the random elements $Z_i$
	and $\Delta_i$ are independent.
\end{theorem}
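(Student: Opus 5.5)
The approach is to translate independence of $Z_i$ and $\Delta_i$ into proportionality of the subdistribution functions to $H$, and then read \eqref{eq:model} off as two separable Stieltjes differential identities. Since $\Delta_i$ takes only three values, independence is equivalent to
\[
T^{(m)}(x)=p_m\,H(x),\qquad x\in\Real,\quad m=0,1,2,
\]
where $p_m=\Prob(\delta_i^{(m)}=1)$. Because $T^{(0)}+T^{(1)}+T^{(2)}=H$, it suffices to have this for $m=0,1$.

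\emph{Subdistribution functions.} Using independence of $L$, $X$, $Y$ and continuity of $K$, $F$, $G$, I will first write
\[
T^{(0)}(x)=\int_{-\infty}^{x}N(t)\,dK(t),\qquad
T^{(1)}(x)=\int_{-\infty}^{x}K(t)\bigl(1-G(t)\bigr)\,dF(t),
\]
and $T^{(2)}(x)=\int_{-\infty}^{x}K(t)(1-F(t))\,dG(t)$. For $T^{(0)}$ I condition on $L=t\le x$ and use $\{V<t\}$. For $T^{(1)}$ I condition on $X=t$ and require $L\le t<Y$.

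\emph{Necessity of independence.} Assume \eqref{eq:model} holds.
\begin{itemize}
\item From $K=N^\beta$ we get $dK=\beta N^{\beta-1}dN$. Hence $T^{(0)}=\int\beta N^{\beta}\,dN=\frac{\beta}{1+\beta}N^{1+\beta}=\frac{\beta}{1+\beta}H$.
\item From $1-G=(1-F)^\theta$ we get $1-N=(1-F)^{1+\theta}$, so $(1-G)\,dF=(1-F)^\theta dF=dN/(1+\theta)$. Therefore $T^{(1)}=\frac{1}{1+\theta}\int N^\beta dN=\frac{1}{(1+\theta)(1+\beta)}H$.
\end{itemize}
Both are constant multiples of $H$, which gives independence.

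\emph{Sufficiency, left censoring.} Conversely, let $T^{(0)}=cH$ and $T^{(1)}=rH$ with constants $c=p_0$ and $r=p_1$. Since $H=NK$, the product rule gives $d(NK)=N\,dK+K\,dN$. The first identity then becomes
\[
(1-c)\,N\,dK=c\,K\,dN .
\]
On the set where $N,K>0$ this reads $d\log K=\beta\,d\log N$ with $\beta=c/(1-c)$. Integrating from $x$ to $+\infty$, where $K(\infty)=N(\infty)=1$, yields $K=N^\beta$ on $\{N>0,K>0\}$. The positivity of $\beta$ requires $0<c<1$, which is part of the nondegeneracy implicit in \eqref{eq:model}; I would state this explicitly.

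\emph{Sufficiency, right censoring.} Next I insert $K=N^\beta$ into $T^{(1)}=rH$. Since $d(KN)=(1+\beta)N^\beta dN$, where $K>0$ this gives
\[
(1-G)\,dF=r(1+\beta)\,dN=r(1+\beta)\bigl[(1-G)\,dF+(1-F)\,dG\bigr].
\]
Separating variables gives $-d\log(1-G)=\theta\,\bigl(-d\log(1-F)\bigr)$ with $\theta=\bigl(1-r(1+\beta)\bigr)/\bigl(r(1+\beta)\bigr)$. Integrating from $-\infty$ gives $1-G=(1-F)^\theta$. Left of the support of $N$ we have $F=G=0$, so this identity holds trivially there.

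\emph{Main obstacle.} I expect the main difficulty to be the bookkeeping on supports, namely extending the identities from the region where logarithms are defined to all of $\Real$. For $K=N^\beta$, the plan is to let $a=\inf\{N>0\}$. The relation $K=N^\beta$ holds on $(a,\infty)$ wherever $K>0$. Letting $x\downarrow a$ and using continuity of $K$ forces $K(a)=0$, hence $K=0=N^\beta$ on $(-\infty,a]$. One must also rule out $K$ vanishing on an interval where $N>0$. This follows because $\log K=\beta\log N$ can be propagated leftward from $+\infty$ without hitting $-\infty$ while $N>0$.
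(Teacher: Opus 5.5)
Your argument is correct. The paper gives no proof of this theorem; it is quoted from \citet{Ab94}. Your route is the standard one: independence of $Z$ and $\Delta$ is equivalent to $T^{(m)}=p^{(m)}H$, and \eqref{eq:model} is then recovered by solving two separable Stieltjes identities. This is also exactly the form in which the paper uses the result. The display $T^{(0)}=(1-\lambda)H$, $T^{(1)}=\gamma\lambda H$ follows the statement immediately, and the proof of Theorem~\ref{th:gfree} uses the integrals $p^{(0)}=\int_0^1(1-v^{\beta})\,dv$ and $p^{(1)}=\int K(1-G)\,dF$.

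Two points need tightening.

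\emph{Nondegeneracy.} This is a genuine hypothesis of the ``if'' direction, and it concerns $p^{(1)}$ and $p^{(2)}$ as well as $p^{(0)}$. Your constants are $\beta=p^{(0)}/(1-p^{(0)})$ and $\theta=(1-s)/s=p^{(2)}/p^{(1)}$, where $s=r(1+\beta)=p^{(1)}/(1-p^{(0)})$. Hence $\beta,\theta\in(0,\infty)$ exactly when all three $p^{(m)}$ are positive. Without this the equivalence fails. Let $X,Y$ satisfy the Koziol--Green relation, and let $L$ have a continuous law supported strictly to the left of the support of $V$. Then $\delta^{(0)}\equiv0$ and $Z=V$ is independent of $\Delta$, yet $K\neq N^{\beta}$ for every $\beta>0$.

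\emph{Right-end support.} The support bookkeeping you did at the left end of $K=N^{\beta}$ is also needed at the right end of the second identity. Integrating $dG/(1-G)=\theta\,dF/(1-F)$ from $-\infty$ gives $1-G=(1-F)^{\theta}$ only below $b=\min\bigl(\sup\{F<1\},\sup\{G<1\}\bigr)$. Let $x\uparrow b$ and use continuity together with $\theta>0$: this shows that $F$ and $G$ reach $1$ together at $b$. Beyond $b$ the identity then holds trivially.

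With these two additions the proof is complete.
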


By Theorem~\ref{th:char}, under \eqref{eq:model},
\begin{equation}
	T^{(0)}(x)=(1-\lambda)H(x),\qquad T^{(1)}(x)=\gamma\lambda H(x),\qquad T^{(2)}(x)=(1-\gamma)\lambda H(x),
\end{equation}
and, letting $x\to+\infty$,
\begin{equation}\label{eq:probs}
	p^{(0)}=\Prob(\delta_i^{(0)}=1)=1-\lambda,\qquad
	p^{(1)}=\Prob(\delta_i^{(1)}=1)=\lambda\gamma,\qquad
	p^{(2)}=\lambda(1-\gamma) .
\end{equation}
Estimating the $p^{(m)}$ by the relative frequencies
$p_n^{(m)}=n^{-1}\sum_{i=1}^{n}\delta_i^{(m)}$ yields
\[
\lambda_n=1-p_n^{(0)},\qquad \gamma_n=p_n^{(1)}\bigl(\lambda_n)^{-1},
\]
while estimating $H$ by the empirical df
$H_n(x)=n^{-1}\sum_{i=1}^{n}\ind(Z_i\le x)$ leads, through
\eqref{eq:Frepr}, to the semiparametric power-type (ACL-type) estimator
\begin{equation}\label{eq:Fn}
	F_n(x)=1-\bigl[1-(H_n(x))^{\lambda_n}\bigr]^{\gamma_n},\qquad x\in\Real .
\end{equation}
Under \eqref{eq:model} the endpoints of the supports of $F,G,N,K,H$ coincide:
$T_F=T_G=T_N=T_K=T_H=\inf\{x:H(x)=1\}$ and
$\tau_F=\dots=\tau_H=\sup\{x:H(x)=0\}$.

\subsection{Weak convergence of the semiparametric empirical process}

Consider the normalized process
\begin{equation}
	Q_n(x)=\sqrt n\,\bigl(F_n(x)-F(x)\bigr),\qquad x\in\Real,
\end{equation}
on a compact interval $D=[\tau,T]\subset\Real$ with $\tau_H<\tau\le T<T_H$. On
such a $D$ one has $0<H(\tau)\le H(x)\le H(T)<1$, so that
$(H(x))^{p^{(0)}}-H(x)$ stays bounded away from zero; this alone suffices for
the argument that follows.

\begin{theorem}[\citealp{AM23}]\label{th:wc}
	The sequence $\{Q_n(x),\,x\in D\}$ converges weakly, in the space
	$\ell^{\infty}(D)$ of bounded functions on $D$ equipped with the sup-norm, to a
	centred Gaussian process $\{A(x),\,x\in D\}$ whose covariance is, for
	$x_1,x_2\in D$,
	\begin{align*}
		\cov\{A(x_1),A(x_2)\}&=(1-F(x_1))(1-F(x_2))\times\\
		&\quad\times\bigl\{a(x_1)a(x_2)\left[H(\min(x_1,x_2))-H(x_1)H(x_2)\right]\\
		&\qquad\quad+b(x_1)b(x_2)\,p^{(0)}(1-p^{(0)})\\
		&\qquad\quad+c(x_1)c(x_2)\,p^{(1)}(1-p^{(1)})\\
		&\qquad\quad-\bigl[b(x_1)c(x_2)+b(x_2)c(x_1)\bigr]p^{(0)}p^{(1)}\bigr\},
	\end{align*}
	where
	\begin{align*}
		a(x)&=\frac{p^{(1)}}{(H(x))^{p^{(0)}}-H(x)},\\
		b(x)&=-\frac{a(x)}{1-p^{(0)}}\,H(x)\log H(x) + \frac{p^{(1)}}{1-p^{(0)}}\,c(x),\\
		c(x)&=-\frac{1}{1-p^{(0)}}\,\log\bigl[1-(H(x))^{1-p^{(0)}}\bigr].
	\end{align*}
\end{theorem}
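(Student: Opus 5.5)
The plan is to view $F_n$ as a smooth map $\Phi(H_n,p_n^{(0)},p_n^{(1)})$ of the empirical df and the two relative frequencies, and to apply the functional delta method in $\ell^\infty(D)$. Define, for $h\in(0,1)$, $p_0\in[0,1)$ and $p_1\in(0,1]$,
\[
\Phi(h,p_0,p_1)=1-\bigl[1-h^{1-p_0}\bigr]^{p_1/(1-p_0)} ,
\]
so that $F_n(x)=\Phi(H_n(x),p_n^{(0)},p_n^{(1)})$ and, by \eqref{eq:Frepr} and \eqref{eq:probs}, $F(x)=\Phi(H(x),p^{(0)},p^{(1)})$. The first step is the joint weak convergence of
\[
\sqrt n\,\bigl(H_n-H,\;p_n^{(0)}-p^{(0)},\;p_n^{(1)}-p^{(1)}\bigr)
\]
in $\ell^\infty(D)\times\Real^2$. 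This follows from the multivariate Donsker theorem, since the indicator class $\{\ind(Z\le x)\}\cup\{\delta^{(0)},\delta^{(1)}\}$ is Donsker. The limit is $(B(H(\cdot)),\xi_0,\xi_1)$ with $B$ a Brownian bridge.

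The second step is to compute the joint covariances. These are where Theorem~\ref{th:char} enters: independence of $Z$ and $\Delta$ gives
\[
\cov\bigl(\ind(Z\le x),\delta^{(m)}\bigr)=\Prob(Z\le x)p^{(m)}-H(x)p^{(m)}=0 .
\]
Hence $B$ is independent of $(\xi_0,\xi_1)$. The multinomial structure gives $\Var\xi_m=p^{(m)}(1-p^{(m)})$ and $\cov(\xi_0,\xi_1)=-p^{(0)}p^{(1)}$. This independence is exactly why no cross terms between $a$ and $b,c$ appear in the stated covariance.

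The third step is to linearize $\Phi$. On $D$, $H(x)\in[H(\tau),H(T)]\subset(0,1)$, so $\Phi$ is continuously differentiable in a neighbourhood of the range, with derivatives bounded uniformly in $x\in D$. The map $(h(\cdot),p_0,p_1)\mapsto\Phi(h(\cdot),p_0,p_1)$ is then Hadamard differentiable as a map from the set of functions taking values in a compact subset of $(0,1)$. The justification is a uniform mean-value (Taylor) argument, applied pointwise in $x$ and controlled uniformly over $D$. The delta method then yields
\[
Q_n\Rightarrow \partial_h\Phi\,B(H)+\partial_{p_0}\Phi\,\xi_0+\partial_{p_1}\Phi\,\xi_1 .
\]

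The last step is to identify the partial derivatives with $(1-F)a$, $(1-F)b$ and $(1-F)c$, up to sign; the signs cancel in the covariance. Write $1-\Phi=\exp\{\gamma'\log(1-h^{\lambda'})\}$ with $\lambda'=1-p_0$ and $\gamma'=p_1/\lambda'$.

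The $h$-derivative is
\[
-(1-F)\,\gamma'\lambda' h^{\lambda'-1}/(1-h^{\lambda'}) = -(1-F)\,p_1/(h^{p_0}-h),
\]
which matches $a$.

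The $p_1$-derivative is $(1-F)\log(1-h^{\lambda'})/\lambda'$, which is $-(1-F)c$.

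The $p_0$-derivative combines $\partial\gamma'/\partial p_0=p_1/\lambda'^2$ with the derivative of $h^{\lambda'}$ with respect to $p_0$, namely $-h^{\lambda'}\log h$. This gives the two terms of $b$.

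The main obstacle is only bookkeeping in this $p_0$ derivative and in its sign conventions. Expanding the variance of the linear combination, using the independence from the second step, then reproduces the stated covariance.
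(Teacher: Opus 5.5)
Your proof is correct, and its analytic core is the paper's. Both arguments linearize the smooth map $(u,y,z)\mapsto(1-u^{1-y})^{z/(1-y)}$ at $(H(x),p^{(0)},p^{(1)})$, and both use the independence of $Z$ and $\Delta$ (Theorem~\ref{th:char}) to remove the cross terms between $a$ and $(b,c)$.

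The difference is in the probabilistic machinery. The paper proves an explicit i.i.d.\ representation with an \emph{almost sure} remainder, $\sup_D|r_n|=O(n^{-1}\log n)$, from a second-order Taylor expansion controlled by the DKWM inequality. It then proves weak convergence of the linear term by hand: the multivariate CLT for the finite-dimensional laws and a fourth-moment criterion for tightness. You instead let the Donsker theorem for $\{\ind(z\le x):x\in D\}\cup\{\delta^{(0)},\delta^{(1)}\}$ and the functional delta method do both jobs. This reduces the proof to derivative bookkeeping, but it yields only an $o_{\Prob}(n^{-1/2})$ remainder. That is enough for the theorem, though weaker than the paper's strong representation if one later wants almost-sure or Berry--Esseen-type refinements.

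Two small points should be tightened.
\begin{enumerate}
\item Your derivatives are right. The gradient of $\log(1-\Phi)$ with respect to $(h,p_0,p_1)$, evaluated at $(H(x),p^{(0)},p^{(1)})$, is exactly $-(a(x),b(x),c(x))$. It is this \emph{common} sign that lets you drop it in the covariance. A sign discrepancy in the $p_0$ slot alone would flip the $b\,c$ cross term, so ``up to sign'' needs that justification.
\item The delta method needs the map defined on the whole range of $(H_n,p_n^{(0)},p_n^{(1)})$. However, $H_n$ need not stay in a compact subset of $(0,1)$ on $D$, and at $p_n^{(0)}=1$ the exponent $p_1/(1-p_0)$ is undefined. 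These events have probability tending to zero. Clipping the arguments to a compact neighbourhood of the true point removes them without affecting Hadamard differentiability there.
\end{enumerate}
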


\begin{proofsketch}
	The result rests on the asymptotic representation
	\begin{equation}\label{eq:repr}
		F_n(x)-F(x)=n^{-1}\sum_{i=1}^{n}\Psi_x(Z_i,\Delta_i)+r_n(x),\qquad
		\sup_{x\in D}|r_n(x)|\overset{a.s.}{=}{\rm O}\left(n^{-1}\log n\right),
	\end{equation}
	in which
	\begin{align*}
		\Psi_x(Z_i,\Delta_i)&=(1-F(x))\,a(x)\bigl(\ind(Z_i\le x)-H(x)\bigr)+(1-F(x))\,b(x)\bigl(\delta_i^{(0)}-p^{(0)}\bigr)\\
		&\quad+(1-F(x))\,c(x)\bigl(\delta_i^{(1)}-p^{(1)}\bigr).
	\end{align*}
	Representation~\eqref{eq:repr} follows from a second-order Taylor expansion of
	the smooth map $(u,y,z)\mapsto(1-u^{1-y})^{z/(1-y)}$ about the point
	$(H(x),p^{(0)},p^{(1)})$, evaluated at $(H_n(x),p_n^{(0)},p_n^{(1)})$; the
	quadratic remainder is bounded on $D$ by the
	Dvoretzky--Kiefer--Wolfowitz--Massart inequality \citep{Mss90} applied to $H_n$
	and to the frequencies $p_n^{(m)}$, the compactness of $D$ keeping every
	denominator away from zero. Since the summands $\Psi_x$ are bounded, centred and
	independent and identically distributed, convergence of the
	finite-dimensional distributions follows from the multivariate central limit
	theorem \citep{VW96}, and tightness is checked by the standard fourth-moment
	bound together with the moment criterion of \citet{VW96}. The covariance
	formula is a direct computation that uses the independence of $Z_i$ and
	$\Delta_i$ guaranteed by Theorem~\ref{th:char}: that independence annihilates
	the cross terms between $a$ and $b,c$, while the last term comes from
	$\Ex[(\delta^{(0)}-p^{(0)})(\delta^{(1)}-p^{(1)})]=-p^{(0)}p^{(1)}$, which
	arises because $\delta^{(0)}\delta^{(1)}=0$. A complete proof is given in
	\citet{AM23}; it is restated here to keep the paper self-contained and for the
	comparison made in Corollary~\ref{cor:df}. As a check, at $\beta=0$ one has
	$\delta^{(0)}\equiv0$, the term $b(x)$ vanishes and the covariance above
	reduces exactly to the formula given for the one-sided Koziol--Green model by
	\citet{CL87}.
\end{proofsketch}

Theorem~\ref{th:wc} sets the semiparametric benchmark against which any
parametric competitor is to be judged: on $D$ the estimator \eqref{eq:Fn}
converges at rate $\sqrt n$ and its limit law is a fully explicit Gaussian
process. We now turn to the central object of the paper.

\section{Parametric estimation of the exponential distribution}\label{sec:param}

\subsection{The closed-form pseudo-MLE}\label{ssec:est}

Suppose the lifetime of the observed unit is exponentially distributed,
\[
X_i\sim F(x,\alpha)=1-e^{-x/\alpha},\quad x\ge0,\ \alpha>0,
\]
and that the censoring laws obey \eqref{eq:model}. Then
$Y_i\sim G(x,\alpha)=1-(1-F(x,\alpha))^{\theta}=1-e^{-\theta x/\alpha}$ and
$L_i\sim K(x,\alpha)=(N(x,\alpha))^{\beta}$ with
$N(x,\alpha)=1-e^{-(1+\theta)x/\alpha}$. Together with \eqref{eq:Frepr} this
gives $1-e^{-x/\alpha}=1-[1-(H(x))^{\lambda}]^{\gamma}$, whence
\begin{equation}\label{eq:Hexp}
	H(x)=H(x;\alpha)=\Bigl(1-e^{-\frac{x}{\alpha\gamma}}\Bigr)^{1/\lambda},
	\qquad
	h(x;\alpha)=\frac{1}{\alpha\lambda\gamma}\,e^{-\frac{x}{\alpha\gamma}}\Bigl(1-e^{-\frac{x}{\alpha\gamma}}\Bigr)^{\frac{1}{\lambda}-1} .
\end{equation}
Treating $(\lambda,\gamma)$ as known for the moment, the log-likelihood of the
observed times $Z_1,\dots,Z_n$ is
\begin{equation}
	\ell_n(\alpha)=\sum_{i=1}^{n}\log h(Z_i;\alpha)
	=-n\log(\alpha\lambda\gamma)-\frac{1}{\alpha\gamma}\sum_{i=1}^{n}Z_i
	+\frac{1-\lambda}{\lambda}\sum_{i=1}^{n}\log\Bigl(1-e^{-\frac{Z_i}{\alpha\gamma}}\Bigr),
\end{equation}
with derivative
\[
\frac{\partial\ell_n(\alpha)}{\partial\alpha}
=-\frac{n}{\alpha}+\frac{1}{\alpha^{2}\gamma}\sum_{i=1}^{n}Z_i
-\frac{1-\lambda}{\lambda}\,\frac{1}{\alpha^{2}\gamma}\sum_{i=1}^{n}
\frac{Z_i\,e^{-Z_i/(\alpha\gamma)}}{1-e^{-Z_i/(\alpha\gamma)}} .
\]
Using the identity $1-e^{-z/(\alpha\gamma)}=(H(z;\alpha))^{\lambda}$, the
likelihood equation $\partial\ell_n/\partial\alpha=0$ takes the fixed-point form
\begin{equation}\label{eq:fixedpoint}
	\alpha=\frac{1}{n\gamma\lambda}\sum_{i=1}^{n}Z_i\Bigl(1-\frac{1-\lambda}{(H(Z_i;\alpha))^{\lambda}}\Bigr).
\end{equation}
Replacing the unknown triple $(H(\cdot\,;\alpha),\lambda,\gamma)$ by its
empirical counterpart $(H_n,\lambda_n,\gamma_n)$ removes the dependence of the
right-hand side on $\alpha$ and yields the \emph{closed-form} pseudo-MLE
\begin{equation}\label{eq:alphan}
	\alpha_n=\frac{1}{\lambda_n\gamma_n n}\sum_{i=1}^{n}Z_i
	\Bigl(1-\frac{1-\lambda_n}{(H_n(Z_i))^{\lambda_n}}\Bigr),
\end{equation}
and with it the estimator $F(x,\alpha_n)=1-e^{-x/\alpha_n}$ of the lifetime df.
Writing $Z_{(1)}\le\dots\le Z_{(n)}$ for the order statistics of the sample
(ties have probability zero), one has $H_n(Z_{(i)})=i/n$ and therefore
\begin{equation}\label{eq:Lstat}
	\alpha_n=\frac{1}{\lambda_n\gamma_n}\Bigl[\bar Z_n-(1-\lambda_n)\,\frac1n\sum_{i=1}^{n}\Bigl(\frac{i}{n}\Bigr)^{-\lambda_n}Z_{(i)}\Bigr],
\end{equation}
an $L$-statistic with score function $J(u)=u^{-\lambda_n}$, corrected by the
multinomial frequencies. The estimator \eqref{eq:alphan} is computed directly
from the ordered sample: no starting values, no iterations and no numerical
optimization are required. For comparison, nonparametric maximum likelihood
estimators for doubly censored data call for iterative self-consistency
algorithms, and parametric likelihoods in the non-informative two-sided scheme
call for numerical maximization; \eqref{eq:alphan} needs neither.

\begin{remark}
	The weights entering \eqref{eq:alphan} are automatically stable at the lower
	edge of the sample: by \eqref{eq:Hexp}, $z\,(H(z))^{-\lambda}\to\alpha\gamma$ as
	$z\downarrow0$, and correspondingly $n^{\lambda_n}Z_{(1)}={\rm O}_{\Prob}(1)$, so
	that no truncation or trimming near the origin is needed.
\end{remark}

\begin{remark}\label{rem:KG}
	When $\beta=0$ (no left censoring) one has $\lambda_n\equiv1$, the correction
	$(1-\lambda_n)H_n^{-\lambda_n}$ in \eqref{eq:alphan} vanishes and, since
	$\gamma_n=p_n^{(1)}$, the estimator reduces to
	\[
	\alpha_n=\frac{1}{\gamma_n n}\sum_{i=1}^{n}Z_i
	=\sum_{i=1}^{n}Z_i\Big/\sum_{i=1}^{n}\delta_i^{(1)} ,
	\]
	the total-time-on-test (TTT) estimator. \citet{ES53} derived it for
	\emph{type~II} censoring, in which $n$ items are put on test and the experiment
	is stopped after the $r$th failure, their estimator being
	$\widehat\theta_{r,n}=\bigl[x_{1,n}+\dots+x_{r,n}+(n-r)x_{r,n}\bigr]/r$, the
	ratio of the total time on test to the number of failures. Under arbitrary
	right censoring the same ratio remains exactly the maximum likelihood estimator
	of the exponential scale.
	
	The simplicity of the estimator is a consequence of the \emph{constant hazard}
	of the exponential distribution: because $f/(1-F)\equiv1/\alpha$, the
	log-likelihood becomes
	$-\bigl(\sum_i\delta_i^{(1)}\bigr)\log\alpha-\alpha^{-1}\sum_i Z_i$ and is
	solved explicitly. This simplicity is not lost entirely: on the class
	$1-F(x;\alpha)=\exp\{-A(x)/\alpha\}$ of Proposition~\ref{prop:phclass} the same
	argument gives $\sum_i A(Z_i)\big/\sum_i\delta_i^{(1)}$, again exactly the
	maximum likelihood estimator. In this sense the TTT estimator is nothing but
	the case $A(x)=x$ of the estimator studied here.
	
	For the one-sided Koziol--Green model, \citet{CL87} constructed
	$\widehat S_F=S_n^{\alpha_n}$ as the maximum likelihood estimator, $S_n$ being
	the empirical survival function of the observed time and $\alpha_n$ the
	proportion of uncensored observations. That is the case $\beta=0$ of the
	\emph{semiparametric} estimator \eqref{eq:Fn}, not of the \emph{parametric}
	estimator \eqref{eq:alphan}: at $\lambda_n=1$ formula \eqref{eq:Fn} becomes
	exactly $1-S_n^{\gamma_n}$ with $\gamma_n=p_n^{(1)}=\alpha_n$. Thus
	\eqref{eq:alphan} is a genuine two-sided extension of the classical theory and
	is distinct from the semiparametric line of work. The derivation itself does
	not depend on exponentiality: in Section~\ref{ssec:general} the likelihood
	equation is written for an arbitrary family and the class on which it admits a
	closed solution is identified. We first treat the exponential case in full.
\end{remark}

\subsection{Exact Fisher consistency}

The population version of the right-hand side of \eqref{eq:alphan} is the
statistical functional
\begin{equation}
	T(\widetilde H,l,g)=\frac{1}{l\,g}\int_{0}^{\infty}z\Bigl(1-\frac{1-l}{(\widetilde H(z))^{l}}\Bigr)\,d\widetilde H(z),
\end{equation}
$\widetilde H$ being an arbitrary df.

The following theorem shows that the estimating equation defining $\alpha_n$ is
Fisher-consistent \emph{exactly}, not merely asymptotically, and for every
censoring configuration of the model.

\begin{theorem}\label{lem:fisher}
	Under the model \eqref{eq:model} with exponentially distributed lifetime,
	\[
	\Ex\,Z=\alpha\gamma\bigl[\psi(1+1/\lambda)-\psi(1)\bigr],
	\qquad
	\Ex\bigl[Z (H(Z))^{-\lambda}\bigr]=\frac{\alpha\gamma}{1-\lambda}\bigl[\psi(1/\lambda)-\psi(1)\bigr],
	\]
	and consequently $T(H(\cdot\,;\alpha),\lambda,\gamma)=\alpha$ exactly, where
	$\psi$ denotes the digamma function.
\end{theorem}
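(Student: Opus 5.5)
The plan is to push both expectations through the probability integral transform. Set $U=H(Z)$, which is uniform on $(0,1)$ because $H$ is continuous. By \eqref{eq:Hexp} one has $U^{\lambda}=1-e^{-Z/(\alpha\gamma)}$, so the observation can be written explicitly as
\[
Z=-\alpha\gamma\log\bigl(1-U^{\lambda}\bigr).
\]
Hence
\[
\Ex Z=-\alpha\gamma\int_0^1\log(1-u^{\lambda})\,du,
\qquad
\Ex\bigl[Z(H(Z))^{-\lambda}\bigr]=-\alpha\gamma\int_0^1u^{-\lambda}\log(1-u^{\lambda})\,du .
\]
Next substitute $v=u^{\lambda}$, so that $du=\lambda^{-1}v^{1/\lambda-1}\,dv$. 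Both integrals then take the form
\[
I(s)=-\frac1\lambda\int_0^1 v^{s-1}\log(1-v)\,dv,
\]
with $s=1/\lambda$ for the first and $s=1/\lambda-1$ for the second.

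The key identity is
\[
-\int_0^1 v^{s-1}\log(1-v)\,dv=\frac{\psi(s+1)-\psi(1)}{s},\qquad s>0 .
\]
I would derive it in one of two ways. The first is to differentiate the Beta function $B(s,t)=\int_0^1v^{s-1}(1-v)^{t-1}dv$ in $t$ at $t=1$, which gives $B(s,1)[\psi(1)-\psi(s+1)]=[\psi(1)-\psi(s+1)]/s$. The second is to expand $-\log(1-v)=\sum_{k\ge1}v^k/k$ and use $\sum_{k\ge1}\frac{1}{k(k+s)}=\frac{\psi(s+1)-\psi(1)}{s}$.

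Applying the identity:
\begin{itemize}
\item With $s=1/\lambda$, the first integral becomes $\frac1\lambda\cdot\lambda[\psi(1+1/\lambda)-\psi(1)]$. This is the stated formula for $\Ex Z$.
\item With $s=1/\lambda-1=(1-\lambda)/\lambda$, which is positive when $\lambda<1$, we get $\frac1\lambda\cdot\frac{\lambda}{1-\lambda}[\psi(1/\lambda)-\psi(1)]$. This is the second formula.
\end{itemize}

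The only delicate point is integrability of the second integral near $u=0$, where the score $u^{-\lambda}$ blows up. Near zero $\log(1-u^{\lambda})\sim-u^{\lambda}$, so the integrand stays bounded and there is no problem. This is the same observation as in the Remark after \eqref{eq:Lstat}. I expect this step, together with interchanging sum and integral (justified by monotone convergence, since all terms are nonnegative), to be the only obstacle. The boundary case $\lambda=1$ (that is, $\beta=0$) has to be set aside: there the factor $1-l$ vanishes and the second term plays no role.

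Finally, Fisher consistency. Write
\[
T(H(\cdot;\alpha),\lambda,\gamma)=\frac{1}{\lambda\gamma}\Bigl(\Ex Z-(1-\lambda)\Ex\bigl[ZH(Z)^{-\lambda}\bigr]\Bigr).
\]
Substituting the two formulas gives
\[
T=\frac{\alpha}{\lambda}\bigl[\psi(1+1/\lambda)-\psi(1/\lambda)\bigr].
\]
By the recurrence $\psi(x+1)-\psi(x)=1/x$ with $x=1/\lambda$, the bracket equals $\lambda$, so $T=\alpha$ exactly.
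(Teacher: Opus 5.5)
Your proof is correct and is essentially the paper's argument: the same substitution $v=H^{\lambda}$, the same identity $\int_0^1(-\log(1-v))v^{a-1}\,dv=[\psi(a+1)-\psi(1)]/a$ at $a=(1-\lambda)/\lambda$ for the second expectation, and the same recurrence $\psi(x+1)-\psi(x)=1/x$ at $x=1/\lambda$ for Fisher consistency. The only cosmetic difference is that the paper obtains $\Ex Z$ from $\int_0^\infty(1-H(x))\,dx$ and the Gauss representation $\int_0^1(1-u^{1/\lambda})/(1-u)\,du$, whereas you reuse the same log-Beta identity with $s=1/\lambda$, which neatly handles both expectations with one formula.
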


\begin{proof}
	Substitute $u=1-e^{-x/(\alpha\gamma)}$, so that $H=u^{1/\lambda}$ and
	$dx=\alpha\gamma\,du/(1-u)$. By the classical integral representation of the
	digamma function,
	\[
	\Ex\,Z=\int_{0}^{\infty}(1-H(x))\,dx
	=\alpha\gamma\int_{0}^{1}\frac{1-u^{1/\lambda}}{1-u}\,du
	=\alpha\gamma\bigl[\psi(1+1/\lambda)-\psi(1)\bigr].
	\]
	Similarly, with $v=(H(z))^{\lambda}$,
	\[
	\Ex\bigl[Z H(Z)^{-\lambda}\bigr]
	=\frac{\alpha\gamma}{\lambda}\int_{0}^{1}\bigl(-\log(1-v)\bigr)v^{1/\lambda-2}\,dv
	=\frac{\alpha\gamma\,[\psi(1/\lambda)-\psi(1)]}{1-\lambda},
	\]
	where the formula $\int_0^1(-\log(1-v))v^{a-1}dv=[\psi(a+1)-\psi(1)]/a$ with
	$a=(1-\lambda)/\lambda$ has been used. Hence, by the recurrence
	$\psi(x+1)-\psi(x)=1/x$ at $x=1/\lambda$,
	\[
	T(H,\lambda,\gamma)
	=\frac{1}{\lambda\gamma}\Bigl(\Ex Z-(1-\lambda)\,\Ex\bigl[Z H^{-\lambda}\bigr]\Bigr)
	=\frac{\alpha}{\lambda}\bigl[\psi(1+1/\lambda)-\psi(1/\lambda)\bigr]=\alpha. \qed
	\]
\end{proof}

\begin{remark}
	In \eqref{eq:model} one has $\beta>0$, hence $0<\lambda<1$, and both
	expressions above are meaningful. In the boundary case $\beta=0$
	($\lambda=1$, Remark~\ref{rem:KG}) the right-hand side of the second identity
	takes the form $0/0$. The singularity is removable and the limit equals
	$\alpha\gamma\,\psi'(1)=\alpha\gamma\pi^{2}/6$, since
	$[\psi(1/\lambda)-\psi(1)]/(1-\lambda)\to\psi'(1)$ as $\lambda\uparrow1$.
	Fisher consistency itself does not break down at $\lambda=1$: in the expression
	for $T$ the factor $(1-\lambda)$ multiplying $\Ex[Z H^{-\lambda}]$ cancels the
	$(1-\lambda)$ in the denominator, so that $T(H,\lambda,\gamma)=\alpha$ holds for
	all $0<\lambda\le1$. At $\lambda=1$ this is also seen directly: the second term
	vanishes and $T=\Ex Z/\gamma=\alpha[\psi(2)-\psi(1)]=\alpha$.
\end{remark}

\subsection{Asymptotic properties}

\begin{theorem}\label{th:cons}
	Under \eqref{eq:model} with exponential lifetime, $\alpha_n\to\alpha$ almost
	surely as $n\to\infty$, for all $\beta,\theta>0$.
\end{theorem}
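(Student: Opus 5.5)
The plan is to split $\alpha_n$ in \eqref{eq:Lstat} into pieces whose almost sure limits are known, and then invoke Theorem~\ref{lem:fisher}. By the strong law of large numbers, $p_n^{(m)}\to p^{(m)}$ a.s., so $\lambda_n\to\lambda\in(0,1)$ and $\gamma_n\to\gamma$ a.s.; likewise $\bar Z_n\to\Ex Z<\infty$ a.s., the exponential tail giving all moments. It then suffices to show
\[
W_n:=\frac1n\sum_{i=1}^{n}\Bigl(\frac{i}{n}\Bigr)^{-\lambda_n}Z_{(i)}\;\longrightarrow\;\Ex\bigl[Z(H(Z))^{-\lambda}\bigr]\quad\text{a.s.}
\]
Indeed, combining the three limits with continuity of $(x,y,l,g)\mapsto(x-(1-l)y)/(lg)$ gives $\alpha_n\to T(H,\lambda,\gamma)=\alpha$ by Theorem~\ref{lem:fisher}.

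The score $u^{-\lambda_n}$ is random and unbounded at $0$, so classical $L$-statistic limit theorems do not apply directly. I would first freeze the exponent. Fix $\varepsilon>0$ so small that $\lambda+\varepsilon<1$. On the a.s. event $|\lambda_n-\lambda|<\varepsilon$ for large $n$, the ratio $(i/n)^{-\lambda_n}/(i/n)^{-\lambda}=(i/n)^{\lambda-\lambda_n}$ is controlled away from $0$. Split the sum at $i\le\delta n$.

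For the bulk $i>\delta n$, the ratio is uniformly within $\delta^{-|\lambda_n-\lambda|}\to1$ of one. The frozen sum $\frac1n\sum_{i>\delta n}(i/n)^{-\lambda}Z_{(i)}=\int z\,H_n(z)^{-\lambda}\ind(H_n(z)>\delta)\,dH_n(z)$ converges a.s. to the corresponding population integral. This follows from Glivenko--Cantelli ($\sup|H_n-H|\to0$ a.s.), since on $\{H>\delta/2\}$ the function $u\mapsto u^{-\lambda}$ is Lipschitz, together with the SLLN for $\frac1n\sum Z_i$ to handle the upper tail. Letting $\delta\downarrow0$ along a countable sequence recovers the full integral, which is finite by Theorem~\ref{lem:fisher}.

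The main obstacle is the lower edge $i\le\delta n$, where the weight $(i/n)^{-\lambda_n}$ is large. To handle it I would not bound weights but use the smallness of $Z_{(i)}$. Since $Z$ is monotone in the ordering, $Z_{(i)}\le Z_{(\lceil\delta n\rceil)}\to H^{-1}(\delta)$ a.s. A cruder bound is insufficient, because $H^{-1}(u)\approx\alpha\gamma u^{\lambda}$ near $0$ by \eqref{eq:Hexp}, so the product $(i/n)^{-\lambda}Z_{(i)}$ is of order one and must be controlled pointwise. I would use the quantile representation $Z_{(i)}=H^{-1}(U_{(i)})$ with uniform order statistics and the bound $H^{-1}(u)\le C u^{\lambda}$ for $u\le1/2$. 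This gives
\[
\frac1n\sum_{i\le\delta n}\Bigl(\frac in\Bigr)^{-\lambda_n}Z_{(i)}\le \frac{C}{n}\sum_{i\le\delta n}\Bigl(\frac in\Bigr)^{-\lambda-\varepsilon}U_{(i)}^{\lambda}.
\]
The a.s. ratio bound $\sup_{i}U_{(i)}/(i/n)=O(\log n)$ is too weak here. Instead I would apply H\"older to get $U_{(i)}^{\lambda}\le U_{(i)}^{\lambda}$ and use the exact linear bound of Wellner: a.s. eventually $U_{(i)}\le K\, (i/n)$ for all $i\ge c\log n$, and for the finitely many $O(\log n)$ smallest indices use crude $U_{(i)}\le U_{(c\log n)}$. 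The result is a bound of order $\delta^{1-\lambda-\varepsilon}$ plus $o(1)$ a.s., which vanishes as $\delta\downarrow0$. This lower-tail control is the step I expect to require the most care. If Wellner's bound proves awkward, the fallback is to write $W_n$ as $\int_0^1 u^{-\lambda_n}\,dG_n(u)$-type expressions via $U_i=H(Z_i)$ and apply the SLLN to $\frac1n\sum U_i^{-\lambda-\varepsilon}H^{-1}(U_i)$, whose expectation is finite since $\lambda+\varepsilon<1$.
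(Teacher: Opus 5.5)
Your proof is correct and has the same skeleton as the paper's: the SLLN for $p_n^{(m)}$ and $\bar Z_n$, almost sure convergence of the $L$-statistic part, then continuous mapping and Theorem~\ref{lem:fisher}. Where you differ is in how you obtain the limit of $W_n$.

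\textbf{The paper's route.} It handles the random exponent by a monotone sandwich. Since $Z_{(i)}\ge0$ and $i/n\le1$, eventually $S_n(\lambda-\varepsilon)\le W_n\le S_n(\lambda+\varepsilon)$, where $S_n(l)=n^{-1}\sum_i(i/n)^{-l}Z_{(i)}$. It then cites the strong law for linear functions of order statistics (\citealp{SW86}, Chapter~19) at the two fixed exponents. This needs only the integrability of $u^{-l}H^{-1}(u)$ and the continuity of its integral in $l$.

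\textbf{Your route.} You prove that strong law by hand: Glivenko--Cantelli and the SLLN in the bulk, and a separate bound at the lower edge. This is self-contained and shows plainly that only $\lambda<1$ is used. The sandwich would, however, have spared you handling $\lambda_n$ separately in the two regions.

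\textbf{The lower edge.} You single this out as the main obstacle, but it is easy, and your remark that the crude bound is insufficient is mistaken. On the event $\lambda_n\le\lambda+\varepsilon$, which holds eventually, $u^{-\lambda-\varepsilon}$ is decreasing and integrable at $0$. Hence
\[
\frac1n\sum_{i\le\delta n}\Bigl(\frac in\Bigr)^{-\lambda_n}Z_{(i)}\le Z_{(\lfloor\delta n\rfloor)}\int_0^{\delta}u^{-\lambda-\varepsilon}\,du=\frac{\delta^{1-\lambda-\varepsilon}}{1-\lambda-\varepsilon}\,Z_{(\lfloor\delta n\rfloor)}\ \xrightarrow{a.s.}\ \frac{\delta^{1-\lambda-\varepsilon}}{1-\lambda-\varepsilon}\,H^{-1}(\delta),
\]
and this vanishes as $\delta\downarrow0$. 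That the individual products $(i/n)^{-\lambda}Z_{(i)}$ are of order one is harmless, because only a fraction $\delta$ of them is averaged with weight $1/n$.

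Your Wellner-type linear bound $U_{(i)}\le K\,i/n$ for all $i\ge c\log n$ is true; Chernoff plus Borel--Cantelli gives it for $K$ large. It is simply superfluous here.

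\textbf{Minor points.} Drop the garbled H\"older sentence. Your fallback is not an independent route: replacing $(i/n)^{-\lambda_n}$ by $U_{(i)}^{-\lambda-\varepsilon}$ requires exactly the ratio control of $U_{(i)}/(i/n)$ that you were trying to avoid.
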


\begin{proofsketch}
	By the strong law of large numbers $\lambda_n\to\lambda$, $\gamma_n\to\gamma$
	and $\bar Z_n\to\Ex Z$ almost surely. For the $L$-statistic part of
	\eqref{eq:Lstat} the score--quantile product is
	$u^{-\lambda}H^{-1}(u)=-\alpha\gamma\,u^{-\lambda}\log(1-u^{\lambda})$, which is
	continuous on $(0,1)$, tends to $\alpha\gamma$ as $u\downarrow0$ and diverges
	only logarithmically as $u\uparrow1$. In particular it is integrable by
	Theorem~\ref{lem:fisher}: $\int_0^1u^{-\lambda}H^{-1}(u)\,du=\kappa_1<\infty$.
	Under that integrability condition the strong law of large numbers for linear
	functions of order statistics (\citealp{SW86}, Chapter~19) gives
	$n^{-1}\sum_i (i/n)^{-\lambda_n}Z_{(i)}\to\kappa_1$ almost surely; the random
	exponent $\lambda_n$ is handled by an almost sure sandwich argument between
	$u^{-\lambda\pm\varepsilon}$. The assertion then follows from the continuous
	mapping theorem and Theorem~\ref{lem:fisher}.
\end{proofsketch}

The next result supplies the limit distribution. For $z>0$ set
\begin{equation}\label{eq:ellz}
	\ell(z)=\int_{0}^{\infty}(H(x))^{-\lambda}\bigl(H(x)-\ind(z\le x)\bigr)\,dx .
\end{equation}
This function is available in closed form.

\begin{lemma}\label{lem:ell}
	For all $\lambda\in(0,1)$ and $z>0$,
	\begin{equation}\label{eq:ellclosed}
		\ell(z)=z+\alpha\gamma\log\Bigl(1-e^{-\frac{z}{\alpha\gamma}}\Bigr)
		-\alpha\gamma\bigl[\psi(1/\lambda)-\psi(1)\bigr].
	\end{equation}
	In particular $\Ex\,\ell(Z)=0$ exactly, and $\Ex\,\ell^{2}(Z)<\infty$.
\end{lemma}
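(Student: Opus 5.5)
The plan is to split the integrand of \eqref{eq:ellz} at $x=z$ and evaluate each piece with the substitution $u=1-e^{-x/(\alpha\gamma)}$ used in the proof of Theorem~\ref{lem:fisher}. Under it, $(H(x))^{-\lambda}=u^{-1}$, $H(x)=u^{1/\lambda}$ and $dx=\alpha\gamma\,du/(1-u)$. We write
\[
\ell(z)=\int_0^\infty (H(x))^{1-\lambda}\,dx-\int_z^\infty (H(x))^{-\lambda}\,dx .
\]
This split is illegitimate as it stands, because neither integral converges at $+\infty$: each integrand tends to $1$. The first step is therefore to regroup as
\[
\ell(z)=\int_0^z (H(x))^{1-\lambda}\,dx-\int_z^\infty\bigl[(H(x))^{-\lambda}-(H(x))^{1-\lambda}\bigr]\,dx ,
\]
whose second integrand equals $H^{-\lambda}(1-H)$ and is integrable at infinity. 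Writing $u_z=1-e^{-z/(\alpha\gamma)}$, the second piece becomes
\[
\alpha\gamma\int_{u_z}^1 \frac{u^{-1}-u^{1/\lambda-1}}{1-u}\,du .
\]

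The second step handles the first piece by the same substitution. It becomes $\alpha\gamma\int_0^{u_z}u^{1/\lambda-1}(1-u)^{-1}\,du$. Adding and subtracting $1/(1-u)$ gives
\[
\alpha\gamma\Bigl[-\log(1-u_z)-\int_0^{u_z}\frac{1-u^{1/\lambda-1}}{1-u}\,du\Bigr],
\]
and $-\alpha\gamma\log(1-u_z)=z$, which produces the leading $z$ in \eqref{eq:ellclosed}. For the tail piece we use the partial fraction $\frac{1}{u(1-u)}=\frac1u+\frac1{1-u}$ to write
\[
\frac{u^{-1}-u^{1/\lambda-1}}{1-u}=\frac1u+\frac{1-u^{1/\lambda-1}}{1-u}.
\]
Its $\frac1u$ part integrates over $(u_z,1)$ to $-\log u_z$, which contributes the term $+\alpha\gamma\log(1-e^{-z/(\alpha\gamma)})$. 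The two remaining integrals of $(1-u^{1/\lambda-1})/(1-u)$, taken over $(0,u_z)$ and $(u_z,1)$, combine into a single integral over $(0,1)$. By the representation used in Theorem~\ref{lem:fisher}, that integral equals $\psi(1/\lambda)-\psi(1)$, which gives \eqref{eq:ellclosed}. The one point needing care is this bookkeeping of signs, so that the two partial integrals really join rather than cancel. I will check it against the limiting behaviour: $\ell(z)\to z-\alpha\gamma[\psi(1/\lambda)-\psi(1)]+o(1)$ as $z\to\infty$, which matches the direct computation since $H^{-\lambda}\to1$.

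For $\Ex\,\ell(Z)=0$, the cleanest route is Fubini on the definition. The identity $\Ex[H(x)-\ind(Z\le x)]=0$ holds for each $x$. This requires the absolute integrability $\int_0^\infty H^{-\lambda}\,\Ex|H(x)-\ind(Z\le x)|\,dx=\int_0^\infty 2H^{1-\lambda}(1-H)\,dx<\infty$. That bound holds because the integrand is bounded near $0$ and is $O(1-H)$ at infinity, with $\Ex Z<\infty$. As a cross-check against the closed form, $\Ex Z-\alpha\gamma\Ex[-\log(1-e^{-Z/(\alpha\gamma)})]$ should equal $\alpha\gamma[\psi(1/\lambda)-\psi(1)]$. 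With $v=H^\lambda$ this reads
\[
\Ex[-\log v]=\lambda\,\Ex[-\log H(Z)]=\lambda ,
\]
since $H(Z)$ is uniform, so $-\log H(Z)$ is standard exponential. Combined with Theorem~\ref{lem:fisher} and the recurrence $\psi(1+1/\lambda)=\psi(1/\lambda)+\lambda$, the check works out.

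Finally, $\Ex\,\ell^2(Z)<\infty$ follows from \eqref{eq:ellclosed}, because $|\ell(z)|\le z+\alpha\gamma|\log(1-e^{-z/(\alpha\gamma)})|+C$. The first term has all moments, since $Z$ has an exponential tail by \eqref{eq:Hexp}. For the second, $-\log(1-e^{-Z/(\alpha\gamma)})=-\lambda\log H(Z)$ is $\lambda$ times a standard exponential variable, so it also has finite second moment. I expect the main obstacle to be the first step, namely justifying the regrouping of the improper integrals and tracking the digamma constant correctly; everything after that is routine.
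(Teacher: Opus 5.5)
Your proposal is correct, and for the closed form it is the paper's argument. You use the same split of the integrand at $x=z$ and the same Gauss integral for $\psi(1/\lambda)-\psi(1)$. Your single substitution $u=1-e^{-x/(\alpha\gamma)}$ does the work of the paper's two successive substitutions $u=H(x)$, $s=u^{\lambda}$. Your sign bookkeeping is right: both partial integrals of $(1-u^{1/\lambda-1})/(1-u)$ enter with a minus sign and join over $(0,1)$. The obstacle you anticipate does not actually arise. Splitting the original integrand at $z$ gives $\int_0^z H^{1-\lambda}\,dx-\int_z^\infty H^{-\lambda}(1-H)\,dx$ directly, with both pieces finite, so the divergent intermediate form never needs to be written.

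Where you genuinely differ is in the two supplementary claims.

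\emph{Centring.} The paper substitutes the closed form into $\Ex$ and uses $\Ex Z$ from Theorem~\ref{lem:fisher}, $\Ex\log(1-e^{-Z/(\alpha\gamma)})=-\lambda$ and the digamma recurrence; that is exactly your cross-check. Your main route instead applies Fubini to the definition of $\ell$, justified by $\int_0^\infty 2H^{1-\lambda}(1-H)\,dx\le 2\,\Ex Z<\infty$. Your route is shorter and shows that the centring is structural: it needs neither the closed form nor any digamma identity, only $\Ex Z<\infty$. The paper's route has the merit of doubling as a consistency check between the lemma and Theorem~\ref{lem:fisher}.

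\emph{Square integrability.} The paper appeals to the asymptotics of $\ell$ (logarithmic singularity at $0$, linear growth at infinity) together with the exponential tail of $H$. Your exact identity $-\log(1-e^{-Z/(\alpha\gamma)})=-\lambda\log H(Z)$, a multiple of a standard exponential variable, gives the bound immediately and is, if anything, tidier.
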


\begin{proof}
	Split at $z$:
	$\ell(z)=\int_0^z H^{1-\lambda}dx-\int_z^{\infty}H^{-\lambda}(1-H)\,dx$.
	Substituting first $u=H(x)$, $dx=\alpha\gamma\lambda u^{\lambda-1}(1-u^{\lambda})^{-1}du$,
	and then $s=u^{\lambda}$, both integrals simplify and, with
	$s_0=(H(z))^{\lambda}=1-e^{-z/(\alpha\gamma)}$,
	\[
	\frac{\ell(z)}{\alpha\gamma}=\int_{0}^{1}\frac{s^{1/\lambda-1}-1}{1-s}\,ds
	-\log(1-s_0)+\log s_0 .
	\]
	The first integral equals $-[\psi(1/\lambda)-\psi(1)]$ by the Gauss
	representation of the digamma function, while the second term is
	$-\log(1-s_0)=z/(\alpha\gamma)$. Collecting the pieces gives
	\eqref{eq:ellclosed}. To verify centring, note that $U=H(Z)$ is uniform and
	$\log(1-e^{-Z/(\alpha\gamma)})=\lambda\log U$, so that
	$\Ex\log(1-e^{-Z/(\alpha\gamma)})=-\lambda$. Together with
	$\Ex Z=\alpha\gamma[\psi(1+1/\lambda)-\psi(1)]$ from Theorem~\ref{lem:fisher},
	\[
	\Ex\,\ell(Z)=\alpha\gamma\bigl[\psi(1+1/\lambda)-\psi(1/\lambda)\bigr]-\alpha\gamma\lambda
	=\alpha\gamma\lambda-\alpha\gamma\lambda=0,
	\]
	because $\psi(x+1)-\psi(x)=1/x$ at $x=1/\lambda$. Square integrability follows
	from \eqref{eq:ellasym} below together with the exponential tail of $H$.
\end{proof}

It is immediate from \eqref{eq:ellclosed} that $\ell$ is \emph{not} bounded,
contrary to what a hasty reading of the form \eqref{eq:ellz} might suggest:
\begin{equation}\label{eq:ellasym}
	\ell(z)=\alpha\gamma\log z+{\rm O}(1)\quad (z\downarrow0),
	\qquad
	\ell(z)=z+{\rm O}(1)\quad (z\to\infty),
\end{equation}
so that $\ell$ has a logarithmic singularity at the origin and grows linearly at
infinity. Both rates are needed for the argument that $\ell$ is square
integrable with respect to $H$. Writing
$\kappa_1=\Ex\bigl[Z(H(Z))^{-\lambda}\bigr]$ and
$\kappa_2=\Ex\bigl[Z(H(Z))^{-\lambda}\log H(Z)\bigr]$, we introduce the constant
\begin{equation}\label{eq:clam}
	c_{\lambda}=\frac{\partial T(H,l,\gamma)}{\partial l}\Big|_{l=\lambda}
	=-\frac{\alpha}{\lambda}+\frac{\kappa_1+(1-\lambda)\kappa_2}{\lambda\gamma} = \frac{\alpha}{\lambda^2}\big(\psi'(\tfrac{1}{\lambda})-\lambda\big),
\end{equation}
where, besides $\kappa_1=\alpha\gamma[\psi(1/\lambda)-\psi(1)]/(1-\lambda)$ from
Theorem~\ref{lem:fisher},
\[
\kappa_2=\frac{\alpha\gamma\,\psi'(1/\lambda)}{\lambda(1-\lambda)}
-\frac{\alpha\gamma\,[\psi(1/\lambda)-\psi(1)]}{(1-\lambda)^{2}},
\]
$\psi'$ being the trigamma function. All the constants above are thus available
in closed form.

The key to the proof is the following identity, which carries the
$L$-statistic from the quantile domain to the distribution-function domain.

\begin{lemma}\label{lem:phi}
	Put $\Phi(t)=\dfrac{1-t^{1-\lambda}}{1-\lambda}$, $t\in[0,1]$ (since
	$\lambda<1$ there is no singularity at $t=0$: $0^{1-\lambda}=0$). For every
	continuous df $\widetilde H$ supported on $[0,\infty)$,
	\begin{equation}\label{eq:phiident}
		T_L(\widetilde H):=\int_{0}^{1}u^{-\lambda}\widetilde H^{-1}(u)\,du
		=\int_{0}^{\infty}\Phi\bigl(\widetilde H(x)\bigr)\,dx ,
	\end{equation}
	both sides being finite or infinite together. Here
	$\Phi'(t)=-t^{-\lambda}$ and $\Phi''(t)=\lambda t^{-\lambda-1}$ on $(0,1)$;
	these derivatives are unbounded as $t\downarrow0$, but $\Phi$ \emph{itself} is
	continuous and \emph{bounded} on the whole of $[0,1]$, with
	$\Phi(0)=(1-\lambda)^{-1}$, $\Phi(1)=0$ and
	$|\Phi(t)-\Phi(s)|\le|t-s|^{1-\lambda}/(1-\lambda)$ for all $s,t\in[0,1]$
	(with equality at $s=0$).
\end{lemma}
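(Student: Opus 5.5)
The identity \eqref{eq:phiident} is a Fubini / layer-cake exchange; it should be carried out with nonnegative integrands so that Tonelli applies, and then both sides are automatically finite or infinite together. The plan is to use the quantile representation of $\widetilde H^{-1}$ as an integral of an indicator, namely
\[
\widetilde H^{-1}(u)=\int_0^\infty \ind\bigl(\widetilde H^{-1}(u)>x\bigr)\,dx .
\]
This holds because $\widetilde H^{-1}(u)\ge0$, the df being supported on $[0,\infty)$. For a generalized inverse $\widetilde H^{-1}(u)=\inf\{x:\widetilde H(x)\ge u\}$ and right-continuous $\widetilde H$ one has the standard equivalence $\widetilde H^{-1}(u)>x\iff u>\widetilde H(x)$. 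Substituting this into the left-hand side of \eqref{eq:phiident} gives
\[
T_L(\widetilde H)=\int_0^1\int_0^\infty u^{-\lambda}\,\ind\bigl(u>\widetilde H(x)\bigr)\,dx\,du .
\]
The integrand is nonnegative and jointly measurable, so Tonelli lets us swap the order of integration. Integrating in $u$ first over $(\widetilde H(x),1]$ yields $\int_{\widetilde H(x)}^1u^{-\lambda}du=(1-\widetilde H(x)^{1-\lambda})/(1-\lambda)=\Phi(\widetilde H(x))$, which is the right-hand side. Since Tonelli holds in $[0,\infty]$, the two sides are finite or infinite together. Continuity of $\widetilde H$ is not even needed beyond making the quantile function well behaved; it only needs noting.

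The analytic properties of $\Phi$ are elementary. Differentiating gives $\Phi'(t)=-t^{-\lambda}$ and $\Phi''(t)=\lambda t^{-\lambda-1}$. Because $1-\lambda>0$, $\Phi$ extends continuously to $0$ with $\Phi(0)=1/(1-\lambda)$, and $\Phi(1)=0$. Since $\Phi$ is decreasing, it is bounded between $0$ and $(1-\lambda)^{-1}$.

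For the Hölder bound, take $0\le s\le t\le1$. Then $|\Phi(t)-\Phi(s)|=(t^{1-\lambda}-s^{1-\lambda})/(1-\lambda)$, and the claim reduces to subadditivity of the concave map $x\mapsto x^{1-\lambda}$ with value $0$ at the origin, i.e. $t^{1-\lambda}\le s^{1-\lambda}+(t-s)^{1-\lambda}$. To prove subadditivity I would use the standard argument: for $a,b\ge0$, $a^p+b^p\ge (a+b)^p\bigl[(a/(a+b))+(b/(a+b))\bigr]=(a+b)^p$ for $p\in(0,1]$. At $s=0$ equality is immediate.

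The only delicate point is the quantile–df equivalence at flat stretches and atoms of $\widetilde H$. I would handle it by noting that the sets where it could fail have Lebesgue measure zero in $u$ (or in $x$), so the double integral is unaffected.
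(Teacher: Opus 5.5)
Your proposal is correct and takes essentially the same route as the paper: write $\widetilde H^{-1}(u)=\int_0^\infty\ind\bigl(x<\widetilde H^{-1}(u)\bigr)\,dx$, use the equivalence $x<\widetilde H^{-1}(u)\iff\widetilde H(x)<u$, swap the integrals by Tonelli to get $\int_{\widetilde H(x)}^1u^{-\lambda}\,du=\Phi(\widetilde H(x))$, and derive the H\"older bound from subadditivity of $t\mapsto t^{1-\lambda}$. Your closing worry about flat stretches and atoms is unnecessary: for the left-continuous inverse and a right-continuous $\widetilde H$, the equivalence holds exactly for every $u\in(0,1)$ and every $x$, not merely almost everywhere.
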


\begin{proof}
	Apply Tonelli's theorem, using the equivalence
	$x<\widetilde H^{-1}(u)\iff\widetilde H(x)<u$; the integrand
	$u^{-\lambda}\ind\bigl(x<\widetilde H^{-1}(u)\bigr)$ is non-negative, so the
	identity holds in $[0,\infty]$. Thus
	\begin{align*}
		\int_0^1 u^{-\lambda}\widetilde H^{-1}(u)\,du
		&=\int_0^1 u^{-\lambda}\!\!\int_0^{\infty}\!\!\ind\bigl(x<\widetilde H^{-1}(u)\bigr)dx\,du\\
		&=\int_0^{\infty}\!\!\int_{\widetilde H(x)}^{1}\!\!u^{-\lambda}du\,dx
		=\int_0^{\infty}\!\Phi\bigl(\widetilde H(x)\bigr)dx .
	\end{align*}
	The inner integral is finite even when the lower limit is $\widetilde H(x)=0$,
	that is when $x$ lies to the left of the support of $\widetilde H$: because
	$\lambda<1$ it then equals exactly
	$\int_0^1u^{-\lambda}du=\Phi(0)=(1-\lambda)^{-1}$. The set of such points is
	$[0,\tau)$ with $\tau=\inf\{x:\widetilde H(x)>0\}$ and contributes
	$\tau\,\Phi(0)$ to the right-hand side, matched on the left by the same term
	arising from $\widetilde H^{-1}(u)\ge\tau$. The H\"older bound follows from the
	subadditivity of $t\mapsto t^{1-\lambda}$.
\end{proof}

The point of \eqref{eq:phiident} is that on its right-hand side the score
function $J(u)=u^{-\lambda}$ has been integrated away: what remains is $\Phi$
alone, which is bounded on $[0,1]$. The unboundedness of $J$ --- the sole
reason for invoking the classical central limit theorems for $L$-statistics with
smooth weights \citep{Mas81, St74}, and hence for the restriction
$\lambda>1/2$ --- is therefore an artefact of writing the statistic in the
quantile domain. In the distribution-function domain the only thing left to
control is a second-order Taylor remainder. Transferring an $L$-functional from
the quantile domain to the distribution-function domain is itself a classical
device in the theory of $L$-statistics (\citealp{SW86}, Chapter~19); the
novelty here lies not in the device but in its consequence, namely that the
restriction on $\lambda$ disappears altogether. The next lemma carries this out.

\begin{lemma}\label{lem:lin}
	For all $\lambda\in(0,1)$,
	\[
	R_n:=T_L(H_n)-T_L(H)-\frac1n\sum_{i=1}^{n}\ell(Z_i)={\rm o}_{\Prob}\bigl(n^{-1/2}\bigr).
	\]
\end{lemma}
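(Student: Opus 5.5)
By Lemma~\ref{lem:phi}, applied to $H_n$ and to $H$, I would write
\[
T_L(H_n)-T_L(H)=\int_0^\infty\bigl[\Phi(H_n(x))-\Phi(H(x))\bigr]\,dx .
\]
The linear part is the first-order term
$\Phi'(H)(H_n-H)=-H^{-\lambda}(H_n-H)$. Since
$H_n(x)-H(x)=n^{-1}\sum_i[\ind(Z_i\le x)-H(x)]$, Tonelli and the definition \eqref{eq:ellz} give
\[
\int_0^\infty-H^{-\lambda}(H_n-H)\,dx=\frac1n\sum_{i=1}^n\ell(Z_i).
\]
Tonelli applies because each summand is integrable, as the closed form \eqref{eq:ellclosed} shows. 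Hence
\[
R_n=\int_0^\infty r_n(x)\,dx,\qquad r_n=\Phi(H_n)-\Phi(H)-\Phi'(H)(H_n-H).
\]
I will prove the stronger statement $\Ex|R_n|={\rm O}(n^{-1}\log n)$ by bounding $\Ex|r_n(x)|$ pointwise in $x$ and integrating. No uniform ratio bounds on $H_n/H$ are then needed.

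\emph{Pointwise bound.} Fix $x$, write $H=H(x)$, and split on the event $A=\{H_n(x)\ge H/2\}$.

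On $A$, the Taylor formula with $\Phi''(t)=\lambda t^{-\lambda-1}$ decreasing gives
$|r_n|\le \tfrac{\lambda}{2}(H/2)^{-\lambda-1}(H_n-H)^2$. For $H\ge1/2$ this becomes simply $C(H_n-H)^2$.

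On $A^c$, we have $|H_n-H|\le H$. The H\"older bound of Lemma~\ref{lem:phi} then gives $|\Phi(H_n)-\Phi(H)|\le H^{1-\lambda}/(1-\lambda)$, and also $H^{-\lambda}|H_n-H|\le H^{1-\lambda}$, so $|r_n|\le CH^{1-\lambda}$.

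Since $nH_n(x)$ is Binomial$(n,H)$, we have $\Ex(H_n-H)^2=H(1-H)/n$, and a Chernoff bound gives $\Prob(A^c)\le e^{-cnH}$. Altogether,
\[
\Ex|r_n(x)|\le C\Bigl[\min\Bigl(\frac{H^{-\lambda}(1-H)}{n},\,H^{1-\lambda}\Bigr)+H^{1-\lambda}e^{-cnH}\Bigr].
\]
On the region $H\le1/n$ I would use the trivial bound $CH^{1-\lambda}$, which is also available there from the H\"older estimate plus $\Ex|H_n-H|\le2H$ and Jensen.

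\emph{Integration.} I would change variables with \eqref{eq:Hexp}: near the origin $H(x)\asymp(x/\alpha\gamma)^{1/\lambda}$, so $dx\asymp H^{\lambda-1}dH$. The three pieces then behave as follows.
\begin{itemize}
\item On $\{H\le1/n\}$, $\int H^{1-\lambda}dx\asymp\int_0^{1/n}dH=1/n$.
\item On $\{1/n\le H\le1/2\}$, $n^{-1}\int H^{-\lambda}dx\asymp n^{-1}\int_{1/n}^{1/2}H^{-1}dH={\rm O}(n^{-1}\log n)$.
\item The exponential term satisfies $\int H^{1-\lambda}e^{-cnH}dx\asymp\int_0^1e^{-cnH}dH={\rm O}(1/n)$.
\end{itemize}
On the upper region $\{H\ge1/2\}$, $\Ex|r_n|\le C(1-H)/n$. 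The exponential tail of $1-H$ makes this integrable, contributing ${\rm O}(1/n)$. Summing, $\Ex|R_n|={\rm O}(n^{-1}\log n)=o(n^{-1/2})$, and Markov's inequality gives the lemma for every $\lambda\in(0,1)$.

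\emph{Main obstacle.} The delicate step is the lower edge, where $H$ is of order $1/n$ and $\Phi''$ blows up. A naive cutoff that bounds $\Phi$ by a constant on $[0,x_n]$ only yields ${\rm O}(n^{-\lambda})$, which fails for $\lambda\le1/2$. The fix is to exploit that both $\Phi(H_n)-\Phi(H)$ and the linear term are ${\rm O}(H^{1-\lambda})$ when $H_n$ undershoots. Combined with the Jacobian $H^{\lambda-1}$ of the change of variables, this cancellation makes the contribution near the origin ${\rm O}(1/n)$ regardless of $\lambda$.
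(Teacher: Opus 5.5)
Your proof is correct, and it takes a genuinely different route from the paper's. Both arguments start from Lemma~\ref{lem:phi} and reduce $R_n$ to $\int_0^\infty r_n(x)\,dx$. The paper then cuts the $x$-axis at the deterministic point $x_n=H^{-1}(n^{-\rho})$, with $\lambda/(1+\lambda)<\rho<\frac12$. To the right of $x_n$ it works on a single global event $E_n=\{\inf_{u\ge n^{-\rho}}\Gamma_n(u)/u\ge\frac12\}$, whose complement is controlled by the Dvoretzky--Kiefer--Wolfowitz--Massart inequality; this forces $\rho<\frac12$. To the left of $x_n$ it bounds the linear term by Cauchy--Schwarz and the nonlinear term by the H\"older bound and Jensen; this forces $\rho>\lambda/(1+\lambda)$.

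You instead localize the event to $A_x=\{H_n(x)\ge H(x)/2\}$ and control it pointwise by the binomial Chernoff bound. Your observation that both parts of $r_n$ are ${\rm O}(H^{1-\lambda})$ on $A_x^c$ replaces the paper's inner-region estimates. This removes the need for any uniform inequality and for the tuning exponent $\rho$. It also gives a stronger conclusion, $\Ex|R_n|={\rm O}(n^{-1}\log n)$. That $L^1$ rate is slightly sharper than the ${\rm O}_{\Prob}(n^{-1+\varepsilon})$ the paper only sketches, via a dyadic Bernstein bound, in the remark after its proof. The paper's own argument discards the event $E_n^c$, which has vanishing probability, and so yields ${\rm o}_{\Prob}(n^{-1/2})$ only in probability.

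One step needs an extra line.
\begin{itemize}
\item The term $H^{1-\lambda}e^{-cnH}$ in your displayed pointwise bound is of order $e^{-cn}$ uniformly on $\{H\ge\frac12\}$. That set is an unbounded $x$-interval, so the term cannot be integrated there.
\item Correspondingly, because of the Jacobian factor $(1-u^{\lambda})^{-1}$, your $\int_0^1 e^{-cnH}\,dH$ should read $\int_0^{1/2}$.
\item Your claim $\Ex|r_n(x)|\le C(1-H)/n$ on $\{H\ge\frac12\}$ is true, but you must handle $A_x^c$ separately there. On $A_x^c$ one has $|H_n-H|>H/2\ge\frac14$, while $|r_n|\le(1-\lambda)^{-1}+1$. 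Hence $|r_n|\le C(H_n-H)^2$ on the whole sample space once $H\ge\frac12$. Chebyshev, $\Prob(A_x^c)\le 4(1-H)/(nH)$, works equally well.
\end{itemize}
With the exponential term confined to $\{H\le\frac12\}$, your three lower pieces and the upper piece add up to ${\rm O}(n^{-1}\log n)$, as you claimed.
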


\begin{proof}
	Let $U_i=H(Z_i)$ be uniform with empirical df $\Gamma_n$; since $H$ is
	continuous, $H_n(x)=\Gamma_n(H(x))$. Put $D_n=H_n-H$, so that
	$\Ex\,D_n^2(x)=n^{-1}H(x)(1-H(x))$ exactly. By \eqref{eq:Hexp} one has $H(x)>0$
	for every $x>0$, hence the quantities $\Phi'(H(x))$ and $\Phi''(H(x))$ appearing
	below are finite (the derivatives in Lemma~\ref{lem:phi} are defined on
	$(0,1)$; the only point excluded, $x=0$, is Lebesgue-null). By
	Lemma~\ref{lem:phi} and the identity
	$\int_0^{\infty}\Phi'(H)D_n\,dx=-n^{-1}\sum_i\ell(Z_i)$,
	\[
	R_n=\int_{0}^{\infty}\Bigl[\Phi\bigl(H_n(x)\bigr)-\Phi\bigl(H(x)\bigr)
	-\Phi'\bigl(H(x)\bigr)D_n(x)\Bigr]dx .
	\]
	Since $\lambda<1$ we have $\lambda/(1+\lambda)<1/2$, so there exists $\rho$ with
	\begin{equation}\label{eq:rho}
		\frac{\lambda}{1+\lambda}<\rho<\frac12 .
	\end{equation}
	Set $a_n=n^{-\rho}$ and $x_n=H^{-1}(a_n)$, and split the integral at $x_n$.
	Throughout, the substitution $u=H(x)$ gives
	$dx=\alpha\gamma\lambda u^{\lambda-1}(1-u^{\lambda})^{-1}du$.
	
	\emph{(a) Outer region $x>x_n$.} Introduce the event
	$E_n=\{\inf_{u\ge a_n}\Gamma_n(u)/u\ge\tfrac12\}$. By the
	Dvoretzky--Kiefer--Wolfowitz--Massart inequality \citep{Mss90},
	$\Prob(E_n^{c})\le\Prob(\|\Gamma_n-\mathrm{id}\|_{\infty}>a_n/2)\le2e^{-na_n^{2}/2}\to0$
	because $\rho<1/2$. On $E_n$, every $v$ between $H(x)$ and $H_n(x)$ satisfies
	$v\ge H(x)/2$, whence $\Phi''(v)\le\lambda2^{\lambda+1}(H(x))^{-\lambda-1}$ and,
	by Taylor's formula,
	\[
	\Bigl|\int_{x_n}^{\infty}[\;\cdot\;]\,dx\Bigr|
	\le\lambda2^{\lambda}\int_{x_n}^{\infty}D_n^{2}(x)\,(H(x))^{-\lambda-1}dx .
	\]
	Taking expectations and substituting $u=H(x)$,
	\begin{align*}
		\Ex\int_{x_n}^{\infty}\!\!D_n^{2}H^{-\lambda-1}dx
		&=\frac1n\int_{a_n}^{1}\!u(1-u)\,u^{-\lambda-1}
		\frac{\alpha\gamma\lambda u^{\lambda-1}}{1-u^{\lambda}}\,du\\
		&=\frac{\alpha\gamma\lambda}{n}\int_{a_n}^{1}\frac{1-u}{u\,(1-u^{\lambda})}\,du
		={\rm O}\!\Bigl(\frac{\log n}{n}\Bigr),
	\end{align*}
	since the integrand behaves like $u^{-1}$ at $0$ and like $\lambda^{-1}$ at $1$.
	Note that $\lambda$ \emph{cancels exactly} here:
	$u^{-\lambda-1}\cdot u^{\lambda-1}=u^{-2}$, and no trace of $\lambda$ remains.
	
	\emph{(b) Inner region, linear term.} By the Cauchy--Schwarz inequality,
	\begin{align*}
		\Ex\int_{0}^{x_n}\!\!H^{-\lambda}|D_n|\,dx
		&\le n^{-1/2}\!\!\int_{0}^{a_n}\!\!u^{-\lambda}\bigl(u(1-u)\bigr)^{1/2}
		\frac{\alpha\gamma\lambda u^{\lambda-1}}{1-u^{\lambda}}du\\
		&\le C n^{-1/2}\!\!\int_{0}^{a_n}\!\!u^{-1/2}du=C'n^{-1/2}a_n^{1/2},
	\end{align*}
	which is ${\rm o}(n^{-1/2})$ because $a_n\to0$. (Again $\lambda$ cancels.)
	
	\emph{(c) Inner region, nonlinear term.} By the H\"older bound of
	Lemma~\ref{lem:phi} and Jensen's inequality ($t\mapsto t^{(1-\lambda)/2}$ being
	concave), $\Ex|D_n(x)|^{1-\lambda}\le\bigl(n^{-1}u(1-u)\bigr)^{(1-\lambda)/2}$,
	so that
	\begin{align*}
		\Ex\int_{0}^{x_n}\!\bigl|\Phi(H_n)-\Phi(H)\bigr|dx
		&\le\frac{n^{-\frac{1-\lambda}{2}}}{1-\lambda}\int_{0}^{a_n}\!\!u^{\frac{1-\lambda}{2}}
		\frac{\alpha\gamma\lambda u^{\lambda-1}}{1-u^{\lambda}}du\\
		&\le C n^{-\frac{1-\lambda}{2}}\!\!\int_{0}^{a_n}\!\!u^{-\frac{1-\lambda}{2}}du
		=C'n^{-\frac{1-\lambda}{2}}a_n^{\frac{1+\lambda}{2}},
	\end{align*}
	convergence of the integral being ensured by $-\tfrac{1-\lambda}{2}>-1$. With
	$a_n=n^{-\rho}$ this is ${\rm O}(n^{-\eta})$ with
	$\eta=\tfrac{1-\lambda}{2}+\rho\tfrac{1+\lambda}{2}$, and the requirement
	$\eta>\tfrac12$ is precisely $\rho>\lambda/(1+\lambda)$, that is
	\eqref{eq:rho}.
	
	Combining (a)--(c) yields $R_n={\rm o}_{\Prob}(n^{-1/2})$.
\end{proof}

\begin{remark}
	The proof used only the exact identity $\Ex D_n^2=n^{-1}H(1-H)$, the
	inequalities of Jensen and Cauchy--Schwarz, Tonelli's theorem and the
	Dvoretzky--Kiefer--Wolfowitz--Massart inequality; no theory of weighted
	empirical processes was needed, and no condition was imposed on $\lambda$
	beyond $\lambda<1$. Choosing $a_n$ by a dyadic Bernstein bound instead of
	DKWM admits any $\rho<1$ and strengthens the conclusion to
	$R_n={\rm O}_{\Prob}(n^{-1+\varepsilon})$ for every $\varepsilon>0$.
\end{remark}

\begin{theorem}\label{th:clt}
	Let $\beta>0$ and $\theta>0$. Then
	\[
	\sqrt n\,(\alpha_n-\alpha)\ \xrightarrow{d}\ N\bigl(0,\sigma^{2}\bigr),
	\qquad
	\sigma^{2}=\Var\,\psi_{\alpha}(Z,\Delta),
	\]
	where the influence function is
	\begin{equation}\label{eq:IF}
		\begin{split}
			\psi_{\alpha}(z,\delta^{(0)},\delta^{(1)})
			&=\frac{1}{\lambda\gamma}\Bigl[(z-\Ex Z)-(1-\lambda)\,\ell(z)\Bigr]
			-c_{\lambda}\bigl(\delta^{(0)}-p^{(0)}\bigr)\\
			&\quad-\frac{\alpha}{\lambda\gamma}\Bigl[\bigl(\delta^{(1)}-p^{(1)}\bigr)+\gamma\bigl(\delta^{(0)}-p^{(0)}\bigr)\Bigr].
		\end{split}
	\end{equation}
\end{theorem}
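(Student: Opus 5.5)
We write $\alpha_n=T(H_n,\lambda_n,\gamma_n)$ and decompose
\[
\alpha_n-\alpha=\bigl[T(H_n,\lambda_n,\gamma_n)-T(H_n,\lambda,\gamma)\bigr]+\bigl[T(H_n,\lambda,\gamma)-T(H,\lambda,\gamma)\bigr],
\]
using $T(H,\lambda,\gamma)=\alpha$ from Theorem~\ref{lem:fisher}. The plan is to linearize each bracket as an i.i.d. sum plus ${\rm o}_{\Prob}(n^{-1/2})$ and then apply the multivariate CLT to the triple of summands $(Z_i,\delta_i^{(0)},\delta_i^{(1)})$. By the $L$-statistic form \eqref{eq:Lstat},
\[
T(H_n,l,g)=\frac{1}{lg}\bigl[\bar Z_n-(1-l)\,T_{L,l}(H_n)\bigr],
\]
where $T_{L,l}(H_n)=n^{-1}\sum_i(i/n)^{-l}Z_{(i)}$. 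This equals $\int_0^1 u^{-l}H_n^{-1}(u)\,du$ up to the discretization of $u^{-l}$ on each cell $((i-1)/n,i/n]$. That discrepancy must be checked to be ${\rm o}_{\Prob}(n^{-1/2})$. The first cell is harmless by the Remark on $n^{\lambda_n}Z_{(1)}={\rm O}_{\Prob}(1)$. The remaining cells give a Riemann-type error of order $n^{-1}\sum_i i^{-1}(i/n)^{-l}Z_{(i)}$, which is ${\rm O}_{\Prob}(n^{-1}\log^2 n)$ given the logarithmic growth of $Z_{(n)}$. Alternatively one may define $T_L$ directly through the step score and repeat Lemma~\ref{lem:phi}. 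We leave this bookkeeping for last.

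\emph{Fixed $(\lambda,\gamma)$.} By Lemma~\ref{lem:lin},
\[
T_L(H_n)-T_L(H)=n^{-1}\sum_i\ell(Z_i)+{\rm o}_{\Prob}(n^{-1/2}),
\]
with $\bar Z_n-\Ex Z$ exact. Hence the second bracket equals $n^{-1}\sum_i(\lambda\gamma)^{-1}\bigl[(Z_i-\Ex Z)-(1-\lambda)\ell(Z_i)\bigr]+{\rm o}_{\Prob}(n^{-1/2})$. This is the first term of \eqref{eq:IF}, and it is centred by Lemma~\ref{lem:ell}.

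\emph{Plug-in of $(\lambda_n,\gamma_n)$.} This is the delta-method part. Since $\lambda_n-\lambda=-(p^{(0)}_n-p^{(0)})$ and
\[
\gamma_n-\gamma=\frac{(p^{(1)}_n-p^{(1)})+\gamma(p^{(0)}_n-p^{(0)})}{\lambda}+{\rm O}_{\Prob}(n^{-1}),
\]
a first-order expansion gives
\[
\partial_l T\cdot(\lambda_n-\lambda)+\partial_g T\cdot(\gamma_n-\gamma),
\qquad \partial_l T=c_\lambda,\quad \partial_g T=-\alpha/\gamma .
\]
These reproduce the $\delta$-terms of \eqref{eq:IF} with the stated signs. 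The point needing care is that the derivatives must be evaluated at $H_n$ rather than $H$, and with the random exponent. For $\gamma$ this is trivial, because $T$ is exactly proportional to $1/g$. For $l$, we write
\[
T(H_n,\lambda_n,\cdot)-T(H_n,\lambda,\cdot)=(\lambda_n-\lambda)\,\partial_l T(H_n,\tilde l,\cdot)
\]
with $\tilde l$ between $\lambda$ and $\lambda_n$. It then suffices that $\partial_l T(H_n,\tilde l,\gamma)\to c_\lambda$ in probability, since $\lambda_n-\lambda={\rm O}_{\Prob}(n^{-1/2})$.

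Differentiating in $l$ produces the $L$-statistics $n^{-1}\sum_i(i/n)^{-l}Z_{(i)}$ and $n^{-1}\sum_i(i/n)^{-l}\log(i/n)\,Z_{(i)}$. Their consistency, uniformly for $l$ in a small neighbourhood of $\lambda$, follows from the sandwich argument in the proof of Theorem~\ref{th:cons}, because the score $u^{-l}|\log u|$ times $H^{-1}(u)$ remains integrable; the limits are $\kappa_1$ and $\kappa_2$. \textbf{I expect this step to be the main obstacle}, since it requires a uniform strong law for $L$-statistics with a random, unbounded score. The first thing to try is the monotone bound: for $|l-\lambda|\le\varepsilon<1-\lambda$ we have $u^{-l}(1+|\log u|)\le u^{-\lambda-\varepsilon}(1+|\log u|)$, which dominates the family and is still integrable against $H^{-1}$. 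Dominated convergence together with \citet{SW86} then gives the required uniformity.

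\emph{Conclusion.} Adding the two linearizations gives
\[
\sqrt n(\alpha_n-\alpha)=n^{-1/2}\sum_i\psi_\alpha(Z_i,\Delta_i)+{\rm o}_{\Prob}(1).
\]
The summands are i.i.d. and centred. They have finite variance because $\Ex\ell^2(Z)<\infty$ by Lemma~\ref{lem:ell}, $Z$ has exponential tails, and the indicators are bounded. The Lindeberg--L\'evy CLT and Slutsky's lemma then yield $N(0,\sigma^2)$ with $\sigma^2=\Var\,\psi_\alpha(Z,\Delta)$. Since Lemma~\ref{lem:lin} holds for every $\lambda\in(0,1)$, no restriction on $\beta,\theta>0$ enters.
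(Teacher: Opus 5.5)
Your proposal is correct and follows essentially the paper's proof. The ingredients are the same: the discretization of $S_n(\lambda)$ against $T_L(H_n)$, Lemma~\ref{lem:lin} at the fixed exponent, and a mean-value step in the exponent that needs only a locally uniform law of large numbers for $S_n$ and $S_n'$, followed by the delta method and Lindeberg--L\'evy. The differences are bookkeeping. You expand $T(H_n,l,g)$ in $(l,g)$, so that $c_\lambda$ and $-\alpha/\gamma$ appear directly, whereas the paper uses $\lambda_n\gamma_n=p_n^{(1)}$ to eliminate $\gamma_n$ and linearizes a ratio. Also, dominated convergence alone does not give the uniformity you need; it follows cleanly from the monotonicity of $S_n(l)$ and $S_n'(l)$ in $l$, or from the paper's uniform Lipschitz bound via $S_n''$.
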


\begin{proof}
	Since $\lambda_n=1-p_n^{(0)}$ and $\gamma_n=p_n^{(1)}/\lambda_n$, we have
	$\lambda_n\gamma_n=p_n^{(1)}$, so that \eqref{eq:Lstat} can be written as
	\begin{equation}\label{eq:alphaclean}
		\alpha_n=\frac{\bar Z_n-p_n^{(0)}\,S_n\bigl(1-p_n^{(0)}\bigr)}{p_n^{(1)}},
		\qquad
		S_n(l)=\frac1n\sum_{i=1}^{n}\Bigl(\frac in\Bigr)^{-l}Z_{(i)} .
	\end{equation}
	By Theorem~\ref{th:char} the vector $\Delta_i$ is independent of $Z_i$, hence
	$(p_n^{(0)},p_n^{(1)})$ is independent of the whole $Z$-sample. The proof is in
	four steps.
	
	\emph{Step 1: discretization.} By Lemma~\ref{lem:phi},
	$T_L(H_n)=\sum_j w_jZ_{(j)}$ with $w_j=\int_{(j-1)/n}^{j/n}u^{-\lambda}du$,
	whereas the estimator involves
	$S_n(\lambda)=\sum_j n^{-1}(j/n)^{-\lambda}Z_{(j)}$. As $u^{-\lambda}$ is
	decreasing, $e_j:=w_j-n^{-1}(j/n)^{-\lambda}\ge0$, and by the mean value theorem
	$e_j\le\tfrac{\lambda}{2}n^{-2}((j-1)/n)^{-\lambda-1}$ for $j\ge2$, while
	$e_1=\tfrac{\lambda}{1-\lambda}n^{\lambda-1}$. Put $u_0=2^{-1/\lambda}$: for
	$u\le u_0$ one has $u^{\lambda}\le\tfrac12$, hence
	$-\log(1-u^{\lambda})\le u^{\lambda}/(1-u^{\lambda})\le2u^{\lambda}$, that is
	$H^{-1}(u)\le2\alpha\gamma u^{\lambda}$; moreover
	$\Ex U_{(j)}^{\lambda}\asymp(j/n)^{\lambda}$. Split the sum at $j/n=u_0$. On the
	lower part,
	\begin{align*}
		\Ex\Bigl|\sum_{j/n\le u_0} e_jZ_{(j)}\Bigr|
		&\lesssim n^{\lambda-1}\Ex Z_{(1)}+\frac1{n^{2}}\sum_{j\ge2}\Bigl(\frac jn\Bigr)^{-\lambda-1}\!\!\Ex Z_{(j)}\\
		&\lesssim n^{-1}+\frac1{n^{2}}\sum_{j\ge2}\frac nj={\rm O}\Bigl(\frac{\log n}{n}\Bigr).
	\end{align*}
	On the upper part ($j/n>u_0$) one has $((j-1)/n)^{-\lambda-1}={\rm O}(1)$ and
	$\Ex Z_{(j)}={\rm O}(\log n)$, so that this term is likewise of order
	$n^{-2}\cdot n\cdot{\rm O}(\log n)={\rm O}(n^{-1}\log n)$. (Here $u_0$ is a
	constant depending on $\lambda$ only, not on $n$.) Hence
	$S_n(\lambda)-T_L(H_n)={\rm O}_{\Prob}(n^{-1}\log n)$ and, combined with
	Lemma~\ref{lem:lin},
	\begin{equation}\label{eq:Slin}
		S_n(\lambda)=\kappa_1+\frac1n\sum_{i=1}^{n}\ell(Z_i)+{\rm o}_{\Prob}(n^{-1/2}).
	\end{equation}
	
	\emph{Step 2: the random exponent.} Let
	\[
	S(l)=\Ex\bigl[Z(H(Z))^{-l}\bigr]
	=\frac{\alpha\gamma}{1-l}\Bigl[\psi\Bigl(1+\tfrac{1-l}{\lambda}\Bigr)-\psi(1)\Bigr]
	\]
	be the population version; in particular $S(\lambda)=\kappa_1$ and
	$S'(\lambda)=-\kappa_2$. For $\varepsilon>0$ with
	$[\lambda-\varepsilon,\lambda+\varepsilon]\subset(0,1)$, the derivative
	\[
	S_n'(l)=-\frac1n\sum_j\Bigl(\frac jn\Bigr)^{-l}\log\Bigl(\frac jn\Bigr)Z_{(j)}
	\]
	admits on that interval the fixed majorant
	$J_\varepsilon(u)=u^{-\lambda-\varepsilon}|\log u|$, and since
	$H^{-1}(u)\asymp\alpha\gamma u^{\lambda}$ at $0$ and
	$H^{-1}(u)\asymp\alpha\gamma\log\frac1{1-u}$ at $1$, one has
	$\int_0^1 J_\varepsilon(u)H^{-1}(u)\,du<\infty$. Under that integrability
	condition the strong law of large numbers for linear functions of order
	statistics (\citealp{SW86}, Chapter~19) gives $S_n'(l)\to S'(l)$ almost surely
	for each fixed $l$. The same majorant argument applied to $S_n''$ yields
	$\sup_{|l-\lambda|\le\varepsilon}|S_n''(l)|={\rm O}_{\Prob}(1)$, so the family is
	uniformly Lipschitz; pointwise convergence together with uniform Lipschitz
	continuity on a compact interval gives uniform convergence. Hence, by the mean
	value theorem, with
	$\Lambda_n=\lambda_n-\lambda=-(p_n^{(0)}-p^{(0)})={\rm O}_{\Prob}(n^{-1/2})$,
	\begin{equation}\label{eq:Srandom}
		S_n(\lambda_n)=S_n(\lambda)+\Lambda_n S_n'(\tilde\lambda_n)
		=S_n(\lambda)-\kappa_2\Lambda_n+{\rm o}_{\Prob}(n^{-1/2}),
	\end{equation}
	where $\tilde\lambda_n$ lies between $\lambda_n$ and $\lambda$, so that
	$\tilde\lambda_n\to\lambda$ almost surely.
	
	\emph{Note.} The random exponent $\lambda_n$ required only a \emph{law of large
		numbers}, not a central limit theorem; and the law of large numbers --- unlike
	the central limit theorem --- holds for unbounded score functions under a plain
	integrability condition.
	
	\emph{Step 3: the numerator.} Substituting \eqref{eq:Slin} and
	\eqref{eq:Srandom} into the numerator of \eqref{eq:alphaclean} and writing
	$\mathcal N=\Ex Z-(1-\lambda)\kappa_1$,
	\[
	\mathcal N_n-\mathcal N=\bigl(\bar Z_n-\Ex Z\bigr)-(1-\lambda)\frac1n\sum_i\ell(Z_i)
	-\bigl[\kappa_1+(1-\lambda)\kappa_2\bigr]\bigl(p_n^{(0)}-p^{(0)}\bigr)
	+{\rm o}_{\Prob}(n^{-1/2}).
	\]
	
	\emph{Step 4: assembling.} By Theorem~\ref{lem:fisher},
	$\alpha=\mathcal N/p^{(1)}$, and $p_n^{(1)}\to p^{(1)}=\lambda\gamma>0$, so that
	\begin{align*}
		\alpha_n-\alpha&=\frac{1}{\lambda\gamma}\Bigl[(\mathcal N_n-\mathcal N)
		-\alpha\bigl(p_n^{(1)}-p^{(1)}\bigr)\Bigr]+{\rm o}_{\Prob}(n^{-1/2})\\
		&=\frac1n\sum_{i=1}^{n}\psi_{\alpha}(Z_i,\Delta_i)+{\rm o}_{\Prob}(n^{-1/2}),
	\end{align*}
	because \eqref{eq:clam} gives
	$\kappa_1+(1-\lambda)\kappa_2=\lambda\gamma\bigl(c_\lambda+\alpha/\lambda\bigr)$,
	which is exactly the coefficient of $\delta^{(0)}-p^{(0)}$ in \eqref{eq:IF}. The
	summands are independent, identically distributed, centred and square
	integrable (Lemma~\ref{lem:ell} for the $Z$-part, boundedness for the
	$\Delta$-part), so the Lindeberg--L\'evy central limit theorem completes the
	argument.
\end{proof}

Thanks to \eqref{eq:ellclosed}, $\sigma^{2}$ is also available in closed form.
Indeed, with $U=H(Z)$ the $Z$-part of \eqref{eq:IF} equals, up to an additive
constant, $\alpha\bigl[-\log(1-U^{\lambda})-(1-\lambda)\log U\bigr]$, and
$U^{\lambda}$ is $\mathrm{Beta}(1/\lambda,1)$ distributed; the independence of
$Z$ and $\Delta$ (Theorem~\ref{th:char}) separates the variance exactly. As a
result, writing $a=1/\lambda$,
$A=\bigl[\kappa_1+(1-\lambda)\kappa_2\bigr]/(\lambda\gamma)$ and
$B=\alpha/(\lambda\gamma)$,
\begin{equation}\label{eq:sigmaclosed}
	\begin{split}
		\sigma^{2}&=\alpha^{2}\Bigl[\psi'(1)-\psi'(1+a)+(1-\lambda)^{2}
		-\tfrac{2(1-\lambda)}{\lambda}\psi'(1+a)\Bigr]\\
		&\quad+A^{2}\lambda(1-\lambda)+B^{2}\gamma\lambda(1-\gamma\lambda)
		-2AB(1-\lambda)\gamma\lambda ,
	\end{split}
\end{equation}
where $\psi'(1)=\pi^{2}/6$. Formula \eqref{eq:sigmaclosed} reproduces to four
decimal places every value of $\sigma$ reported in Section~\ref{ssec:bias}.

\begin{remark}\label{rem:beta-ge-1}
	Theorem~\ref{th:clt} imposes no restriction whatever on the depth of left
	censoring. The condition $\beta<1$ (that is $\lambda>1/2$), which looks natural
	in this problem, arose only from the need to place the score function
	$J(u)=u^{-\lambda}$ within the reach of the classical central limit theorems for
	order statistics with smooth weights \citep{Mas81, SW86, St74};
	Lemma~\ref{lem:phi} removes that need, because $\Phi$ is bounded and $\lambda$
	cancels exactly in the bounds of Lemma~\ref{lem:lin}. The numerical picture is
	the same: for every one of the five censoring designs of Section~\ref{sec:mc}
	--- including $\beta=4$, that is $\lambda=0.2$ --- the empirical correlation between
	$\sqrt n(\alpha_n-\alpha)$ and the linearized sum
	$n^{-1/2}\sum_i\psi_\alpha(Z_i,\Delta_i)$ is at least $0.988$ at $n=1000$ and at
	least $0.998$ at $n=5000$, tending to $1$ as $n$ grows; and
	the standard deviation $\sqrt n\cdot\mathrm{SD}(\alpha_n)$ approaches
	$\sigma=(\Var\psi_\alpha)^{1/2}$ \emph{from above}, the ratios at $n=5000$
	running from $1.000$ to $1.013$, the largest occurring precisely in the design
	$\beta=4$ (Section~\ref{ssec:bias}).
	How much is this freedom worth? In biomedical follow-up left censoring is
	usually light --- in the cohort of Section~\ref{ssec:ivaids},
	$\widehat\beta=0.064$ --- and there the condition $\lambda>1/2$ is no obstacle.
	Deep left censoring occurs in reliability testing, where a unit may already
	have failed at the first inspection; $\beta\ge1$ is exactly that regime, and
	the theory now covers it. The only thing heavy left censoring affects is not
	the limit itself but the \emph{rate} of the normal approximation, as the
	quantile--quantile plots of Section~\ref{ssec:bias} show.
\end{remark}

\begin{corollary}\label{cor:df}
	Under the conditions of Theorem~\ref{th:clt}, since
	$\sup_{x}|\partial F(x,\alpha)/\partial\alpha|=e^{-1}/\alpha$ (attained at
	$x=\alpha$),
	\[
	\sqrt n\,\sup_{x\ge0}\bigl|F(x,\alpha_n)-F(x,\alpha)\bigr|
	\ \xrightarrow{d}\ \frac{\sigma}{e\,\alpha}\,|N(0,1)| .
	\]
\end{corollary}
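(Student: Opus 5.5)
The plan is a delta-method argument in the space of bounded functions, combining Theorem~\ref{th:clt} with a uniform Taylor expansion of $\alpha\mapsto F(x,\alpha)$. Write
\[
D_n(x)=\sqrt n\bigl(F(x,\alpha_n)-F(x,\alpha)\bigr)=\sqrt n(\alpha_n-\alpha)\,\partial_\alpha F(x,\tilde\alpha_{n,x}),
\]
with $\tilde\alpha_{n,x}$ between $\alpha_n$ and $\alpha$. Here $\partial_\alpha F(x,a)=-(x/a^{2})e^{-x/a}$. Setting $t=x/a$, this equals $-a^{-1}te^{-t}$, whose absolute value is maximized at $t=1$ with value $e^{-1}/a$. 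This justifies the stated constant $\sup_x|\partial_\alpha F|=e^{-1}/\alpha$.

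\emph{Step 1: uniform derivative approximation.} I will show that
\[
\sup_{x\ge0}\bigl|\partial_\alpha F(x,a)-\partial_\alpha F(x,\alpha)\bigr|\to0 \quad\text{as } a\to\alpha.
\]
Write $g_a(x)=a^{-1}(x/a)e^{-x/a}$. Then $|g_a(x)-g_\alpha(x)|\le|a^{-1}-\alpha^{-1}|e^{-1}+\alpha^{-1}\sup_x|\phi(x/a)-\phi(x/\alpha)|$, where $\phi(t)=te^{-t}$. Since $\phi$ is Lipschitz with constant $1$, the second supremum is bounded by $\sup_x x|1/a-1/\alpha|$. That bound is unbounded in $x$, so I will instead use the uniform continuity of $\phi$ together with its decay. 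For $x\le M$ the Lipschitz bound applies directly. For $x>M$ both terms are at most $2\sup_{t>M/(2\alpha)}\phi(t)$ once $a$ is near $\alpha$, and this is small for large $M$.

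\emph{Step 2: apply consistency.} By Theorem~\ref{th:cons}, $\alpha_n\to\alpha$ a.s., so $\sup_x|\tilde\alpha_{n,x}-\alpha|\le|\alpha_n-\alpha|\to0$. Step 1 then gives
\[
\sup_x\bigl|D_n(x)-\sqrt n(\alpha_n-\alpha)\,\partial_\alpha F(x,\alpha)\bigr|\le|\sqrt n(\alpha_n-\alpha)|\cdot o(1)=o_{\Prob}(1),
\]
since $\sqrt n(\alpha_n-\alpha)=O_{\Prob}(1)$ by Theorem~\ref{th:clt}.

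\emph{Step 3: conclusion.} From Step 2,
\[
\sqrt n\sup_x|F(x,\alpha_n)-F(x,\alpha)|=|\sqrt n(\alpha_n-\alpha)|\,e^{-1}\alpha^{-1}+o_{\Prob}(1).
\]
Theorem~\ref{th:clt}, the continuous mapping theorem applied to $t\mapsto|t|e^{-1}\alpha^{-1}$, and Slutsky's lemma then give the limit $(\sigma/(e\alpha))|N(0,1)|$.

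The only delicate point is the uniformity over the unbounded domain $x\ge0$ in Step 1, which the tail-splitting argument there handles.
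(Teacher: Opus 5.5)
Your proposal is correct and takes the same route as the paper. The paper gives the corollary only the one-line delta-method justification $\sup_x|\partial_\alpha F(x,\alpha)|=e^{-1}/\alpha$; your Step~1, which shows that $\partial_\alpha F(\cdot,a)\to\partial_\alpha F(\cdot,\alpha)$ uniformly on $[0,\infty)$ as $a\to\alpha$, supplies the uniformity over the unbounded domain that this one line leaves implicit.
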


Corollary~\ref{cor:df} should be compared with Theorem~\ref{th:wc}: the
parametric plug-in estimator attains the same $\sqrt n$ rate as the
semiparametric estimator \eqref{eq:Fn}, but with a substantially smaller
constant; the Monte Carlo experiments of Section~\ref{ssec:df} quantify this
gain at a factor of $2.1$--$3.7$ in expected sup-norm for $n\ge300$ (at least
$1.7$ at every sample size).

\subsection{How much efficiency does the closed form cost?}\label{ssec:eff}

Since \eqref{eq:alphan} is obtained by substituting the empirical distribution
function into the likelihood equation, a natural question arises: how much
information does that shortcut lose? The question can be answered exactly,
because Theorem~\ref{th:char} exhibits the observed-data model completely: $Z$
and $\Delta$ are independent, with
\[
Z\sim H(z)=\bigl(1-e^{-z/s}\bigr)^{a},\quad s=\alpha\gamma,\ \ a=1/\lambda,
\]
\[
\Delta\sim\mathrm{Mult}\bigl(1-\lambda,\ \gamma\lambda,\ (1-\gamma)\lambda\bigr).
\]
The Fisher information matrix for $(\alpha,\lambda,\gamma)$ therefore consists
of two summands, the $\Delta$-part in closed form and the $Z$-part a
one-dimensional integral over $V=H(Z)\sim\mathrm{U}(0,1)$. For any regular
estimator of $\alpha$ the lower bound on the asymptotic standard deviation is
$\bigl(I^{-1}\bigr)_{11}^{1/2}$. Table~\ref{tab:eff} compares it with
\eqref{eq:sigmaclosed}.

\begin{table}[!htp]
	\centering
	\caption{Asymptotic efficiency of the estimator \eqref{eq:alphan}: $\sigma$ from
		\eqref{eq:sigmaclosed} against the information bound of the observed-data model
		($\alpha=1$). Efficiency is (bound$/\sigma)^{2}$. The bound is computed by
		quadrature; as $\beta\to0$ it tends to the analytic value
		$\alpha/\sqrt\gamma=\alpha\sqrt{1+\theta}$ (for instance $\sqrt2$ at $\theta=1$)
		and the efficiency tends to $1$}
	\label{tab:eff}
	\small
	\begin{tabular}{cc cc c}
		\toprule
		$(\beta,\theta)$ & $\lambda$ & $\sigma$ \eqref{eq:sigmaclosed} & information bound & efficiency\\
		\midrule
		$(0.5,0.5)$ & 0.667 & 1.2674 & 1.2622 & 99.2\%\\
		$(1,1)$     & 0.500 & 1.6716 & 1.6626 & 98.9\%\\
		$(2,1)$     & 0.333 & 1.9335 & 1.9198 & 98.6\%\\
		$(1,2)$     & 0.500 & 2.1896 & 2.1827 & 99.4\%\\
		$(4,2)$     & 0.200 & 3.2709 & 3.2594 & 99.3\%\\
		\bottomrule
	\end{tabular}
\end{table}

Across the five designs the loss is of the order of one per cent: the efficiency
runs from $98.6\%$ to $99.4\%$, that is a loss in variance between $0.6\%$ and
$1.4\%$. A wider sweep --- a grid of $40$ points over the rectangle
$\beta\in[0.1,16]$, $\theta\in[0.25,4]$ --- gives between $96.2\%$ and
$99.98\%$. The two directions behave differently. In $\theta$ the efficiency
increases monotonically (the heavier the right censoring, the higher it is),
whereas in $\beta$ it is U-shaped. The worst point is therefore
$(\beta,\theta)=(4,0.25)$, with a loss of $3.8\%$: on the edge of the region in
$\theta$, but in the \emph{interior} in $\beta$. A practical conclusion
follows: evaluating a single $\beta$- or $\theta$-section is not enough, because
the minimum over one-dimensional sections lies appreciably above the minimum
over the whole region. Both ends of the U are intelligible. In the limit of
vanishing left censoring ($\beta\to0$) the efficiency tends to $1$: there
$\lambda_n=1$ in \eqref{eq:alphan} and the estimator becomes
$\sum_iZ_i/\sum_i\delta_i^{(1)}$, that is the genuine maximum likelihood
estimator. As $\beta$ grows, the information the $\Delta$-block carries about
$\lambda$ increases like $(1-\lambda)^{-1}+\lambda^{-1}\sim\beta$ and the
efficiency returns to $1$ ($99.9\%$ at $\beta=256$, $\theta=0.25$).

It should be stressed that this is the right benchmark: \eqref{eq:alphan}
estimates $\lambda$ and $\gamma$ from the data as well
($\lambda_n=1-p_n^{(0)}$, $\gamma_n=p_n^{(1)}/\lambda_n$), so comparing it with
the bound of the three-parameter model $(\alpha,\lambda,\gamma)$ is the
appropriate comparison. Were $(\lambda,\gamma)$ taken as known, $\Delta$ would
carry no information about $\alpha$ at all and the bound would drop
substantially --- at $(\beta,\theta)=(1,1)$, for instance, from $1.66$ to
$0.74$. The magnitude of $\sigma$ is thus governed mainly by the need to
estimate the nuisance parameters, not by the passage to closed form. The
practical conclusion is that full numerical maximization would reduce the
variance by at most $1.4\%$ in the designs of Table~\ref{tab:eff}, and by at
most $3.8\%$ over the whole sweep.

\subsection{Extension to an arbitrary family of distributions}\label{ssec:general}

In the derivation of Section~\ref{ssec:est} exponentiality was used only at the
last step. To see this, pass to the cumulative hazard function
$\Lambda(x;\vartheta)=-\log(1-F(x;\vartheta))$: by \eqref{eq:Frepr},
$1-H^{\lambda}=(1-F)^{1/\gamma}$, hence $\Lambda/\gamma=-\log(1-H^{\lambda})$ and
\[
\log h(z;\vartheta)=\log\Lambda'(z;\vartheta)-\frac{\Lambda(z;\vartheta)}{\gamma}
+\frac{1-\lambda}{\lambda}\log\bigl(1-e^{-\Lambda(z;\vartheta)/\gamma}\bigr)-\log(\lambda\gamma).
\]

\begin{proposition}\label{prop:general}
	For an arbitrary family $F(\cdot\,;\vartheta)$ the likelihood equation under
	\eqref{eq:model} takes the form
	\begin{equation}\label{eq:genscore}
		\sum_{i=1}^{n}\frac{\partial}{\partial\vartheta}\log\Lambda'(Z_i;\vartheta)
		=\frac{1}{\lambda\gamma}\sum_{i=1}^{n}\frac{\partial\Lambda(Z_i;\vartheta)}{\partial\vartheta}
		\Bigl[1-\frac{1-\lambda}{H(Z_i;\vartheta)^{\lambda}}\Bigr].
	\end{equation}
	In particular, the weight $1-(1-\lambda)H^{-\lambda}$ of \eqref{eq:alphan} does
	not depend on the family.
\end{proposition}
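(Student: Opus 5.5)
The plan is to differentiate the displayed expression for $\log h(z;\vartheta)$ with respect to $\vartheta$, sum over the sample, and rewrite the derivative of the logarithmic term through the identity $1-e^{-\Lambda/\gamma}=H^{\lambda}$. This identity follows from \eqref{eq:Frepr}. The log-likelihood of the observed times is $\ell_n(\vartheta)=\sum_i\log h(Z_i;\vartheta)$. Here $(\lambda,\gamma)$ are treated as known, exactly as in Section~\ref{ssec:est}. By Theorem~\ref{th:char}, $\Delta$ is independent of $Z$ with a law depending only on $(\lambda,\gamma)$, so the $\Delta$-part contributes nothing to the $\vartheta$-score. The term $-\log(\lambda\gamma)$ drops out as well.

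First I would confirm the displayed form of $\log h$. From $H=(1-e^{-\Lambda/\gamma})^{1/\lambda}$, differentiating in $z$ gives
\[
h=\frac{1}{\lambda\gamma}\Lambda'e^{-\Lambda/\gamma}\bigl(1-e^{-\Lambda/\gamma}\bigr)^{1/\lambda-1}.
\]
This reproduces the expression stated before the proposition, and it reduces to \eqref{eq:Hexp} when $\Lambda=z/\alpha$.

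Next I would differentiate term by term. Writing $\dot\Lambda=\partial\Lambda/\partial\vartheta$, the derivative of $-\Lambda/\gamma$ is $-\dot\Lambda/\gamma$. The derivative of the third term is
\[
\frac{1-\lambda}{\lambda}\cdot\frac{\dot\Lambda\,e^{-\Lambda/\gamma}/\gamma}{1-e^{-\Lambda/\gamma}}.
\]
Substituting $1-e^{-\Lambda/\gamma}=H^{\lambda}$ and $e^{-\Lambda/\gamma}=1-H^{\lambda}$ turns this into
\[
\frac{(1-\lambda)\dot\Lambda}{\lambda\gamma}\bigl(H^{-\lambda}-1\bigr).
\]
Adding it to $-\dot\Lambda/\gamma=-\lambda\dot\Lambda/(\lambda\gamma)$ gives the combined coefficient
\[
\frac{\dot\Lambda}{\lambda\gamma}\bigl[-\lambda+(1-\lambda)H^{-\lambda}-(1-\lambda)\bigr]
=-\frac{\dot\Lambda}{\lambda\gamma}\bigl[1-(1-\lambda)H^{-\lambda}\bigr].
\]
Setting the summed score to zero and moving this term to the right-hand side yields \eqref{eq:genscore}.

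Finally, the weight $1-(1-\lambda)H^{-\lambda}$ involves $\vartheta$ only through $H(Z_i;\vartheta)$, and its form is the same for every family. As a consistency check, take $\Lambda=z/\alpha$, so that $\partial_\alpha\log\Lambda'=-1/\alpha$ and $\dot\Lambda=-z/\alpha^2$. Equation \eqref{eq:genscore} then becomes \eqref{eq:fixedpoint}.

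There is no real obstacle here. The one delicate step is the algebra that merges $-\lambda$ and $-(1-\lambda)$ into $-1$, so that exactly the weight of \eqref{eq:alphan} appears. I would also state the regularity assumptions explicitly: $\Lambda$ is differentiable in $\vartheta$, $\Lambda'>0$, and differentiation is allowed under the sum.
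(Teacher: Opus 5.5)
Your proposal is correct and follows the paper's proof. Like the paper, you differentiate the displayed $\log h$ in $\vartheta$, substitute $1-e^{-\Lambda/\gamma}=H^{\lambda}$, and merge the two $\partial_\vartheta\Lambda$ terms into $-\frac{1}{\lambda\gamma}[1-(1-\lambda)H^{-\lambda}]$; the paper states the same algebra as the identity $1-\frac{1-\lambda}{\lambda}\cdot\frac{1-H^{\lambda}}{H^{\lambda}}=\frac{1}{\lambda}[1-(1-\lambda)H^{-\lambda}]$, and your extra remarks on the $\Delta$-part and the check against \eqref{eq:fixedpoint} are consistent but not required.
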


\begin{proof}
	Differentiate the expression for $\log h$ above with respect to $\vartheta$,
	substitute $1-e^{-\Lambda/\gamma}=H^{\lambda}$ and use the identity
	$1-\frac{1-\lambda}{\lambda}\cdot\frac{1-H^{\lambda}}{H^{\lambda}}
	=\frac1\lambda\bigl[1-(1-\lambda)H^{-\lambda}\bigr]$.
\end{proof}

In general \eqref{eq:genscore} cannot be solved, because $\vartheta$ remains on
both sides, inside $\Lambda$ and $\Lambda'$. We now show on which class a closed
form appears, and that it appears only there. From here on $\vartheta$ is taken
to be a scalar parameter. Since ``closed form'' is not a formal notion, we
replace it by \emph{separability}: call \eqref{eq:genscore} separable if
$\partial_{\vartheta}\log\Lambda'(x;\vartheta)=a(\vartheta)p(x)$ and
$\partial_{\vartheta}\Lambda(x;\vartheta)=b(\vartheta)q(x)$. Once $H_n$ has been
substituted, the weights $w_i=1-(1-\lambda_n)H_n(Z_i)^{-\lambda_n}$ are then free
of $\vartheta$ and the equation becomes $(a/b)(\vartheta)=T_n$, where
\[
T_n=\frac{\sum_{i=1}^{n}q(Z_i)\,w_i}{\lambda_n\gamma_n\sum_{i=1}^{n}p(Z_i)} ;
\]
if $a/b$ is invertible, this is solved explicitly. In
Proposition~\ref{prop:phclass} one has $p\equiv1$, $q=A$ and
$(a/b)(\alpha)=\alpha$, and $T_n$ becomes exactly \eqref{eq:alphaA}.

\begin{proposition}\label{prop:phclass}
	Let the family be of \emph{proportional hazards} form,
	\[
	1-F(x;\alpha)=\exp\{-A(x)/\alpha\},\qquad \alpha>0,
	\]
	where $A$ is a \emph{known} function, differentiable on $(0,\infty)$, with
	$A'>0$, $A(0)=0$ and $A(\infty)=\infty$. No condition in this list is
	superfluous and each has its own role. Differentiability makes $F$ continuous
	and endows it with a density; $A'>0$ makes that density positive and, above all,
	makes $A$ \emph{invertible} on $[0,\infty)$ --- continuity and strict monotonicity
	are exactly what the proof below rests on; $A(0)=0$ gives $F(0)=0$ and
	$A(\infty)=\infty$ gives $F(\infty)=1$, so that $F$ is a genuine df. Then the
	left-hand side of \eqref{eq:genscore} becomes $-n/\alpha$ and the equation takes
	the same fixed-point form as \eqref{eq:fixedpoint}, with $A(Z_i)$ in place of
	$Z_i$. Replacing $(H,\lambda,\gamma)$ by $(H_n,\lambda_n,\gamma_n)$ gives the
	closed-form estimator
	\begin{equation}\label{eq:alphaA}
		\begin{split}
			\alpha_n^{A}&=\frac{1}{\lambda_n\gamma_n n}\sum_{i=1}^{n}A(Z_i)
			\Bigl[1-\frac{1-\lambda_n}{H_n(Z_i)^{\lambda_n}}\Bigr]\\
			&=\frac{1}{\lambda_n\gamma_n}\Bigl[\overline{A(Z)}_n
			-(1-\lambda_n)\frac1n\sum_{i=1}^{n}\Bigl(\frac in\Bigr)^{-\lambda_n}A(Z_{(i)})\Bigr].
		\end{split}
	\end{equation}
	Moreover, since $A$ is strictly increasing,
	$A(Z)=\max\{A(L),\min(A(X),A(Y))\}$, $\Delta$ is unchanged and $H_n$ depends on
	ranks alone. Hence $\alpha_n^{A}$ is precisely the estimator \eqref{eq:alphan}
	applied to $A(Z_1),\dots,A(Z_n)$. Consequently the distribution of
	$\alpha_n^{A}/\alpha$ does not depend on $A$, \emph{for every $n$}. The reason
	is immediate: as shown in the proof, $A(Z)=-\alpha\gamma\log(1-U^{\lambda})$
	with $U=H(Z)\sim\mathrm{U}(0,1)$, and $A$ does not appear on the right-hand side
	at all --- that is, changing $A$ changes the \emph{observations} but not the
	\emph{relative} error of the estimator. For this reason
	Theorems~\ref{lem:fisher}, \ref{th:cons} and~\ref{th:clt},
	formula~\eqref{eq:sigmaclosed} and the efficiencies of Table~\ref{tab:eff} hold
	verbatim. For the same reason Proposition~\ref{prop:naive} and
	Theorem~\ref{th:gfree} carry over: on the $A$-scale the model becomes exactly
	one with exponential lifetime, arbitrary $G$ and $K=N^{\beta}$.
\end{proposition}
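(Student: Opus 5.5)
The plan proves the statement in three parts: the form of the likelihood equation, the identification of $\alpha_n^A$ with $\alpha_n$ on the transformed sample, and the distributional invariance, from which everything carried over follows.

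\textbf{Step 1: the likelihood equation.} Put $\Lambda(x;\alpha)=A(x)/\alpha$ into Proposition~\ref{prop:general}. Then $\Lambda'(x;\alpha)=A'(x)/\alpha$, so $\partial_\alpha\log\Lambda'=-1/\alpha$ and the left-hand side of \eqref{eq:genscore} equals $-n/\alpha$. On the right, $\partial_\alpha\Lambda=-A(x)/\alpha^2$. Multiplying through by $-\alpha^2$ gives
\[
\alpha=\frac{1}{n\lambda\gamma}\sum_i A(Z_i)\bigl[1-(1-\lambda)H(Z_i;\alpha)^{-\lambda}\bigr],
\]
which is \eqref{eq:fixedpoint} with $A(Z_i)$ in place of $Z_i$. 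Substituting $(H_n,\lambda_n,\gamma_n)$ gives the first line of \eqref{eq:alphaA}. The second line follows, as for \eqref{eq:Lstat}, from $H_n(Z_{(i)})=i/n$ and the fact that $A$ preserves order, so $A(Z)_{(i)}=A(Z_{(i)})$.

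\textbf{Step 2: $\alpha_n^A$ is $\alpha_n$ on the $A$-scale.} Because $A$ is continuous and strictly increasing, it commutes with $\max$ and $\min$. Hence $A(Z)=\max\{A(L),\min(A(X),A(Y))\}$, and each event $\{\min(X,Y)<L\}$, $\{L\le X<Y\}$, $\{L\le Y<X\}$ coincides with its $A$-transformed version, so $\Delta$ is unchanged. The empirical df $H_n$ evaluated at the data points is $i/n$ at the $i$-th order statistic, so it depends on ranks only, and ranks are preserved. Therefore $\alpha_n^A$ is formula \eqref{eq:alphan} applied to $(A(Z_i),\Delta_i)$.

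Next check the transformed model. $A(X)$ has survival function $e^{-t/\alpha}$, so it is exponential with scale $\alpha$. Write $\tilde G$ and $\tilde K$ for the dfs of $A(Y)$ and $A(L)$, and $\tilde N$ for that of $\min(A(X),A(Y))$. Composing \eqref{eq:model} with $A^{-1}$ gives $1-\tilde G=(1-\tilde F)^\theta$ and $\tilde K=\tilde N^\beta$. Independence and continuity are also preserved. So the transformed data satisfy the exponential two-sided model exactly, and Theorems~\ref{lem:fisher}, \ref{th:cons}, \ref{th:clt}, formula~\eqref{eq:sigmaclosed} and Table~\ref{tab:eff} apply to $\alpha_n^A$ verbatim.

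\textbf{Step 3: $A$-free law of $\alpha_n^A/\alpha$.} By \eqref{eq:Hexp} on the transformed scale, $U=H(Z)$ is uniform, and inverting $H$ gives $A(Z)=-\alpha\gamma\log(1-U^\lambda)$. By Theorem~\ref{th:char}, $Z$ is independent of $\Delta$, and $\Delta$ has the law \eqref{eq:probs}. Thus the joint law of $(A(Z_i)/\alpha,\Delta_i)_{i\le n}$ depends only on $(\lambda,\gamma)$. Since $\alpha_n^A/\alpha$ is a fixed measurable function of that vector (the estimator is scale-equivariant), its distribution is free of $A$ for every $n$.

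The extensions to Proposition~\ref{prop:naive} and Theorem~\ref{th:gfree} use the same transfer, with $G$ arbitrary instead of satisfying \eqref{eq:model}. The main obstacle I expect is this last transfer: those results do not assume the $\theta$-half of \eqref{eq:model}, so the uniform representation from Theorem~\ref{th:char} is not available. I would rely only on Step 2 (the structural equivalence of the models under $A$) and not on the distributional formula.
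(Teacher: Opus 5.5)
Your proof is correct and follows the paper's own argument. It uses the same substitution $\Lambda=A(x)/\alpha$ into \eqref{eq:genscore}, the same observation that a strictly increasing $A$ commutes with $\max$ and $\min$ and preserves $\Delta$ and ranks, and the same representation $A(Z)=-\alpha\gamma\log(1-U^{\lambda})$.

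Your two additions fill in what the paper leaves implicit. You verify explicitly that both halves of \eqref{eq:model} hold on the $A$-scale, and you invoke Theorem~\ref{th:char} to pass from the $A$-free marginal law of $A(Z)$, which is all the paper's proof writes down, to the $A$-free joint law of $(A(Z_i)/\alpha,\Delta_i)_{i\le n}$. The only slip is your claim that Proposition~\ref{prop:naive}, like Theorem~\ref{th:gfree}, does not assume the $\theta$-half; it does assume it, though your structural transfer in Step 2 covers both results anyway.
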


\begin{proof}
	With $\Lambda=A(x)/\alpha$ one has $\Lambda'=A'(x)/\alpha$, hence
	$\partial\log\Lambda'/\partial\alpha=-1/\alpha$ and
	$\partial\Lambda/\partial\alpha=-A(x)/\alpha^{2}$; substituting into
	\eqref{eq:genscore} and cancelling $\alpha^{2}$ gives the analogue of
	\eqref{eq:fixedpoint} with $Z_i\mapsto A(Z_i)$.
	
	The second part rests on two separate properties. Because $A$ is
	\emph{increasing} it commutes with $\max$ and $\min$, so that
	$A(Z)=\max\{A(L),\min(A(X),A(Y))\}$; strictness is not needed for this. Because
	$A$ is \emph{strictly} increasing it preserves the events $\{V<L\}$ and
	$\{X<Y\}$ --- hence $\Delta$, and with it $\lambda_n$ and $\gamma_n$, is
	unchanged --- and it preserves ranks, so that
	$A\bigl(Z_{(i)}\bigr)=\bigl(A(Z_1),\dots,A(Z_n)\bigr)_{(i)}$: applying $A$ and
	ordering may be interchanged. Finally, the general analogue of \eqref{eq:Hexp}
	is $A(Z)=-\alpha\gamma\log(1-U^{\lambda})$ with $U=H(Z)\sim\mathrm{U}(0,1)$, and
	the right-hand side does not involve $A$ at all. The df of $A(Z)$ is therefore
	$\bigl(1-e^{-w/(\alpha\gamma)}\bigr)^{1/\lambda}$ whatever $A$ may be, that is
	exactly the df of $Z$ in the exponential case $A(x)=x$.
	
	It matters to read this statement correctly, for it concerns the
	\emph{transformed} observation $A(Z)$, not $Z$ or $X$. If, for instance,
	$A(x)=x^{2}$, then $X$ is Weibull with shape $2$, not exponential, and the
	distribution of $Z$ differs from the one in the exponential case; the quantity
	that is redistributed is not $Z$ but $Z^{2}$. Note also that even in the
	exponential case $Z$ itself is not exponential: its df is a $1/\lambda$-th
	power, and it reduces to an exponential only at $\lambda=1$.
\end{proof}

\begin{proposition}[converse]\label{prop:converse}
	Let \eqref{eq:genscore} be separable, with $q'(x)\ne0$ for all $x$ and
	$b\not\equiv0$. (The first condition is exactly the requirement $A'>0$ of
	Proposition~\ref{prop:phclass}; the second is an identification condition: if
	$b\equiv0$ then $\partial_{\vartheta}\Lambda\equiv0$, so $\vartheta$ has no
	effect on $F$ at all and the statement is vacuous.) Then
	\[
	\Lambda(x;\vartheta)=B(\vartheta)\,A(x),
	\]
	that is, the family belongs, up to reparametrization, to the proportional
	hazards class of Proposition~\ref{prop:phclass}.
\end{proposition}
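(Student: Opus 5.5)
The plan is to integrate the two separability relations in $\vartheta$ and then check them against each other. Separability gives $\partial_\vartheta\Lambda(x;\vartheta)=b(\vartheta)q(x)$. Fix a reference value $\vartheta_0$ and integrate from $\vartheta_0$ to $\vartheta$:
\[
\Lambda(x;\vartheta)=\Lambda(x;\vartheta_0)+\beta(\vartheta)q(x),\qquad \beta(\vartheta)=\int_{\vartheta_0}^{\vartheta}b(t)\,dt .
\]
Since $b\not\equiv0$, the function $\beta$ is not identically zero. Differentiating in $x$ gives
\[
\Lambda'(x;\vartheta)=\Lambda_0'(x)+\beta(\vartheta)q'(x),
\]
where $\Lambda_0=\Lambda(\cdot\,;\vartheta_0)$. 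It follows that
\[
\partial_\vartheta\log\Lambda'=\frac{b(\vartheta)q'(x)}{\Lambda_0'(x)+\beta(\vartheta)q'(x)} .
\]
Here I assume enough smoothness to interchange $\partial_x$ and $\partial_\vartheta$, as is implicit in the score equation \eqref{eq:genscore}.

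The second separability relation requires this expression to equal $a(\vartheta)p(x)$. Choose $\vartheta_1$ with $b(\vartheta_1)\ne0$ and $\beta(\vartheta_1)$ in the range where the relation holds. Then $a(\vartheta_1)\ne0$ and $p(x)\ne0$ for all $x$, because $q'(x)\ne0$ and $\Lambda'>0$. Taking the reciprocal gives
\[
\frac{\Lambda_0'(x)}{q'(x)}+\beta(\vartheta)=\frac{b(\vartheta)}{a(\vartheta)}\cdot\frac{1}{p(x)},
\]
valid for every $\vartheta$ with $b(\vartheta)\ne0$. Write $r(x)=\Lambda_0'(x)/q'(x)$ and $s(x)=1/p(x)$. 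We then have $r(x)+\beta(\vartheta)=c(\vartheta)s(x)$ for all $x$ and all such $\vartheta$.

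This is the key step. I will compare two values of $\vartheta$ and split into two cases.

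\emph{Case 1: $c$ is constant on the relevant set.} Then $\beta$ is also constant there, and this constant value must be the same as at $\vartheta_0$. If $\beta$ is in fact nonconstant, this is a contradiction, and we are in Case 2. If $\beta$ is constant, then $b=\beta'=0$ on that set, which contradicts $b(\vartheta)\neq 0$.

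\emph{Case 2: $c(\vartheta_1)\ne c(\vartheta_2)$ for some $\vartheta_1,\vartheta_2$.} Subtracting the two relations gives
\[
s(x)=\frac{\beta(\vartheta_1)-\beta(\vartheta_2)}{c(\vartheta_1)-c(\vartheta_2)},
\]
so $s$ is a constant $s_0$, and therefore $r(x)=c(\vartheta_1)s_0-\beta(\vartheta_1)=:k$ is constant as well. Hence $\Lambda_0'=k\,q'$. Integrating, and using $\Lambda(0;\vartheta)=0$ for every $\vartheta$ (which forces $\beta(\vartheta)q(0)=0$, hence $q(0)=0$ because $\beta\not\equiv0$), we get $\Lambda_0=k\,q$. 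Consequently
\[
\Lambda(x;\vartheta)=\bigl(k+\beta(\vartheta)\bigr)q(x)=B(\vartheta)A(x),
\]
with $A=q$ (or $-q$, so that $A'>0$; note $q'$ has constant sign by continuity) and $B=\pm(k+\beta)$.

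The main obstacle is the bookkeeping of where the pointwise identities hold: the zero set of $b$, and why $c$ cannot be constant. I would handle this by working on an open interval of $\vartheta$ on which $b\ne0$; such an interval exists by continuity of $b$, which I will assume. On that interval $\beta$ is strictly monotone. If $c$ were constant there, then $\beta(\vartheta)=c\,s(x)-r(x)$ would be independent of $\vartheta$, contradicting strict monotonicity. So Case 2 applies, and the product form extends to all $\vartheta$ through the integrated formula for $\Lambda$ in the first paragraph.
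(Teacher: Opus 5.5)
Your proof is correct and follows essentially the paper's route: integrate $\partial_\vartheta\Lambda=b(\vartheta)q(x)$ to get $\Lambda=\Lambda_0+\beta(\vartheta)q$, use separability of $\partial_\vartheta\log\Lambda'$ to force $\Lambda_0'/q'$ to be constant, then kill the additive constant and $q(0)$ via $\Lambda(0;\vartheta)=0$ and fix the sign of $q$. The only difference is the device for forcing constancy: you take reciprocals and subtract at two values of $\vartheta$, whereas the paper takes the ratio at two points $x_1,x_2$ and differentiates it in $B$. Your Case~1 paragraph is loosely worded (for instance, $\vartheta_0$ need not lie in the set where $b\ne0$). It is, however, superseded by your final restriction to an open interval on which $b\ne0$ and $\beta$ is strictly monotone, which settles that case cleanly.
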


\begin{proof}
	From $\partial_{\vartheta}\Lambda=b(\vartheta)q(x)$ we get
	$\Lambda(x;\vartheta)=B(\vartheta)q(x)+r(x)$ with $B'=b$, hence
	$\Lambda'=Bq'+r'$ and
	\[
	\partial_{\vartheta}\log\Lambda'=\frac{B'q'}{Bq'+r'}
	=\frac{B'}{\,B+r'/q'\,}
	\]
	(the last equality uses $q'\ne0$). For this to be of the form
	$a(\vartheta)p(x)$, the ratio
	\[
	\frac{\partial_{\vartheta}\log\Lambda'(x_1;\vartheta)}{\partial_{\vartheta}\log\Lambda'(x_2;\vartheta)}
	=\frac{B+r'(x_2)/q'(x_2)}{B+r'(x_1)/q'(x_1)}
	\]
	must be free of $\vartheta$ for arbitrary $x_1,x_2$. Since $b\not\equiv0$, $B$
	varies, and
	\[
	\frac{\partial}{\partial B}\,\frac{B+r'(x_2)/q'(x_2)}{B+r'(x_1)/q'(x_1)}
	=\frac{r'(x_1)/q'(x_1)-r'(x_2)/q'(x_2)}{\bigl(B+r'(x_1)/q'(x_1)\bigr)^{2}}\,,
	\]
	so this is possible only if $r'/q'\equiv c$ is constant. Then $r=cq+\mathrm{const}$
	and $\Lambda=(B+c)q+\mathrm{const}$. Because the lifetime is supported on
	$[0,\infty)$, that is $F(0;\vartheta)=0$, we have $\Lambda(0;\vartheta)=0$ for
	every $\vartheta$; since $B$ varies, this forces $q(0)=0$ and the vanishing of
	the constant. Finally, $\Lambda'>0$ and $\Lambda'=(B+c)q'$ imply that the sign of
	$q'$ does not change with $x$; replacing $(B+c,q)$ by $(-(B+c),-q)$ if necessary
	we may take $q'>0$. It therefore suffices to take $B+c$ as the new parameter and
	$q$ as $A$, and such an $A$ satisfies all the conditions of
	Proposition~\ref{prop:phclass}.
\end{proof}

\begin{remark}\label{rem:family}
	The class of Proposition~\ref{prop:phclass} contains the exponential
	($A(x)=x$), the Weibull with \emph{known} shape $k$ ($A(x)=x^{k}$, $\alpha$ being
	the $k$th power of the scale), the Rayleigh ($A(x)=x^{2}$), the Gompertz with
	known rate ($A(x)=e^{cx}-1$) and the Pareto or Lomax
	($A(x)=\log(1+x/x_{0})$, with $1-F=(1+x/x_{0})^{-1/\alpha}$) --- in general, a
	known baseline hazard multiplied by an unknown scalar. Notably, the class also
	contains members with \emph{infinite mean}: for the Lomax, $\Ex X=\infty$ when
	$\alpha\ge1$. This has no bearing on the theory, because every moment condition
	is imposed on $A(Z)$ rather than on $Z$, and $A(X)\sim\mathrm{Exp}(\alpha)$
	always has a finite mean; numerically, even at $\alpha=2$ (that is
	$\Ex X=\infty$) the bias of $\alpha_n^{A}$ came out as $0.0003$ ($\pm0.0034$).
	\emph{Not} in the class are: the Weibull with unknown shape, the gamma, the
	lognormal and any family with a location parameter; for these
	\eqref{eq:genscore} has no closed solution. For two-parameter families --- the
	Weibull with unknown shape $k$, say --- a profile-like device remains: fixing
	$k$ makes $A_k(x)=x^{k}$ known and $\alpha$ is obtained \emph{explicitly} from
	\eqref{eq:alphaA}, while an outer one-dimensional search evaluates the
	log-likelihood of $Z$ at $\alpha=\alpha_n^{A_k}$ and maximizes it over $k$ (the
	$\Delta$-part does not enter the outer step, being free of $k$). This is not a
	genuine profile likelihood --- the inner step is not an MLE but
	\eqref{eq:alphaA} --- yet being explicit it is appreciably cheaper than a
	two-dimensional maximization; over $k\in\{0.7,\,1,\,2\}$ and
	$n\in\{400,\,3000\}$, in a study with $200$ replications, both parameters were
	recovered with a small relative bias (at most $0.30\%$ in $k$ and $1.82\%$ in
	$\alpha$, the largest deviation occurring at $n=400$ and disappearing at
	$n=3000$). Furthermore, neither Theorem~\ref{th:char} nor the goodness-of-fit
	criterion of \citet{AM26k} based on it depends on $F$, so the procedure for
	testing the model conditions applies unchanged across the whole class.
	
	Finally, this result should be read correctly. Propositions~\ref{prop:phclass}
	and~\ref{prop:converse} together delimit the scope of the method \emph{exactly}:
	the closed form exists on the proportional hazards class and nowhere outside it.
	It requires $A$ to be \emph{known exactly}. This is a common situation in
	reliability --- the shape of the baseline hazard is known from engineering
	theory and only its scale is unknown --- and a rare one in biomedicine. The
	assumption ``Weibull with known shape'' is therefore no weaker than the
	assumption ``exponential''; the assumption is merely moved from one place to
	another. In this sense the present section cuts both ways: it is at once an
	extension and a limitation. It should be added that the robustness analysis of
	Section~\ref{sec:robust} does not concern this displacement: it measures
	misspecification of $F$ and therefore remains specific to the family.
\end{remark}

\subsection{The price of ignoring left censoring}

A practitioner who disregards left censoring and treats the data as
right-censored only will use the total-time-on-test (TTT) estimator of
Remark~\ref{rem:KG}:
$\alpha_n^{\mathrm{TTT}}=\sum_{i=1}^{n}Z_i\big/\sum_{i=1}^{n}\delta_i^{(1)}$.
The following result quantifies the systematic error thereby incurred, exactly.

\begin{proposition}\label{prop:naive}
	Under \eqref{eq:model} with exponential lifetime,
	\[
	\alpha_n^{\mathrm{TTT}}\ \xrightarrow{a.s.}\ \frac{\Ex Z}{p^{(1)}}
	=\alpha\,(1+\beta)\bigl[\psi(2+\beta)-\psi(1)\bigr],
	\]
	so that the asymptotic relative bias
	$g(\beta):=(1+\beta)[\psi(2+\beta)-\psi(1)]-1$ is positive for every $\beta>0$,
	strictly increasing in $\beta$ and --- remarkably --- free of the depth
	$\theta$ of right censoring.
\end{proposition}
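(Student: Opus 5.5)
The plan has two parts: first identify the almost sure limit and its closed form, then establish positivity and monotonicity of $g$.

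\emph{The limit.} Write $\alpha_n^{\mathrm{TTT}}=\bar Z_n/p_n^{(1)}$. By the strong law of large numbers $\bar Z_n\to\Ex Z<\infty$ and $p_n^{(1)}\to p^{(1)}=\lambda\gamma>0$ almost surely, using \eqref{eq:probs}. The continuous mapping theorem then gives the limit $\Ex Z/p^{(1)}$.

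Theorem~\ref{lem:fisher} supplies $\Ex Z=\alpha\gamma[\psi(1+1/\lambda)-\psi(1)]$. Dividing by $\lambda\gamma$ cancels $\gamma$, so $\theta$ disappears. Substituting $1/\lambda=1+\beta$ yields $\alpha(1+\beta)[\psi(2+\beta)-\psi(1)]$. This establishes the displayed formula and the claimed independence of $\theta$ in one stroke.

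\emph{Positivity and monotonicity of $g$.} Put $x=1+\beta>1$ and $f(x)=x[\psi(1+x)-\psi(1)]$, so $g=f-1$ and $f(1)=\psi(2)-\psi(1)=1$. I would use the Gauss representation already used in the paper:
\[
\psi(1+x)-\psi(1)=\int_0^1\frac{1-u^{x}}{1-u}\,du .
\]
It is then enough to show that $f$ is strictly increasing on $[1,\infty)$. Positivity for $\beta>0$ follows from $f(x)>f(1)=1$, and monotonicity in $\beta$ is the same statement.

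The derivative is
\[
f'(x)=\bigl[\psi(1+x)-\psi(1)\bigr]+x\,\psi'(1+x).
\]
Both terms are positive for $x>0$: $\psi$ is increasing and $\psi'>0$ because $\psi'(y)=\sum_k (y+k)^{-2}$. Hence $f'>0$ and the monotonicity is immediate.

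A cruder alternative also works: for $x\ge1$ one has $\psi(1+x)-\psi(1)\ge 1$, since it equals $\sum_{k\ge1}\bigl(\frac1k-\frac1{k+x}\bigr)$, which is increasing in $x$ and equals $1$ at $x=1$. So $f$ is a product of two positive increasing factors.

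There is no real obstacle in this argument. The only point needing care is checking that $\Ex Z<\infty$ so the strong law applies, and this is guaranteed by Theorem~\ref{lem:fisher}.
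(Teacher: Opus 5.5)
Your proposal is correct and follows the paper's proof essentially step for step. The limit comes from the strong law together with Theorem~\ref{lem:fisher} and $p^{(1)}=\lambda\gamma$, with $\gamma$ cancelling. Positivity and monotonicity come from $g(0)=0$ and $g'(\beta)=[\psi(2+\beta)-\psi(1)]+(1+\beta)\psi'(2+\beta)>0$, and your $f'(x)$ at $x=1+\beta$ is exactly this derivative.
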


\begin{proof}
	The limit follows at once from the strong law of large numbers,
	Theorem~\ref{lem:fisher} (with $1/\lambda=1+\beta$, so that
	$\Ex Z=\alpha\gamma[\psi(1+1/\lambda)-\psi(1)]$) and $p^{(1)}=\gamma\lambda>0$.
	As for positivity: $g(0)=0$ because $\psi(2)-\psi(1)=1$, and
	\[
	g'(\beta)=\bigl[\psi(2+\beta)-\psi(1)\bigr]+(1+\beta)\,\psi'(2+\beta)>0,
	\]
	both summands being positive for $\beta>-1$ ($\psi$ is increasing and
	$\psi'>0$). Hence $g$ is strictly increasing and, since $g(0)=0$, we have
	$g(\beta)>0$ for all $\beta>0$. In particular $g'(0)=1+\psi'(2)=\pi^{2}/6$, and
	$g(\beta)\sim\beta\log\beta$ as $\beta\to\infty$, so the bias grows without
	bound.
\end{proof}

\emph{Why $\theta$ disappears.} Under \eqref{eq:model} right censoring does
nothing to the observed time axis but \emph{rescale} it: since
$1-N(x)=e^{-(1+\theta)x/\alpha}$, the variable $V=\min(X,Y)$ is exponential with
mean $\alpha\gamma$; and because $K=N^{\beta}$ is a power of $N$, the variable
$L$ carries the same scale $\alpha\gamma$, hence so does $Z=\max(L,V)$. Thus
$\Ex Z=\alpha\gamma\,c(\beta)$ with $c$ depending on $\beta$ alone. At the same
time the proportion of complete observations, $p^{(1)}=\gamma\lambda$, is
reduced by that very $\gamma$. The TTT estimator being exactly the ratio of
these two quantities, $\gamma$ --- and with it $\theta$ --- cancels. Left
censoring is different: it distorts the \emph{shape} of the distribution, and it
is from this that the harmonic-number factor below arises.

For integer $\beta$ the factor $\psi(2+\beta)-\psi(1)$ equals the harmonic
number $1+\frac12+\dots+\frac{1}{1+\beta}$. The limit is thus $3\alpha$ at
$\beta=1$ and $5.5\alpha$ at $\beta=2$. Even moderate informative left
censoring therefore makes the naive estimator useless (at $\beta=1$ the bias is
$2$ times $\alpha$, that is $200\%$). The prediction is confirmed by the
simulations of Section~\ref{ssec:bias}: at $n=1000$, in each of the five
designs, the observed TTT bias departs from the closed-form value by no more
than $1.7\%$.

Finally, by Proposition~\ref{prop:phclass} this analysis holds on the whole
proportional hazards class: when $1-F=\exp\{-A(x)/\alpha\}$, a practitioner who
ignores left censoring computes $\sum_iA(Z_i)\big/\sum_i\delta_i^{(1)}$ and
incurs exactly the same relative bias $g(\beta)$. The form of $A$ has no effect
on this.

\subsection{Which half of the model is really needed}

Condition \eqref{eq:model} consists of two separate requirements: the
informativeness of the \emph{entry} law, $K=N^{\beta}$, and the informativeness
of the \emph{right-censoring} law, $1-G=(1-F)^{\theta}$. It is natural to ask
which of the two the estimator actually uses. The answer is that, for
consistency, the second is not used at all.

\begin{theorem}\label{th:gfree}
	Let $L,X,Y$ be independent with continuous dfs, let $X$ be exponential with mean
	$\alpha$, and assume only that $K=N^{\beta}$ for some $\beta>0$. No $\theta$ is
	present here, so $\lambda$ and $\gamma$ are defined through the observed
	frequencies, $\lambda=1-p^{(0)}$ and $\gamma=p^{(1)}/\lambda$ --- which is
	exactly how the estimator estimates them. From $K=N^{\beta}$ the identity
	$\lambda=1/(1+\beta)$ persists for every $G$, since
	$p^{(0)}=\int_0^1(1-v^{\beta})dv=\beta/(1+\beta)$, whereas $\gamma$ now depends
	on $G$. Then, \emph{whatever the right-censoring df $G$ may be},
	\[
	T(H,\lambda,\gamma)=\alpha ,
	\]
	that is, the estimating equation \eqref{eq:fixedpoint} remains exactly
	Fisher-consistent and $\alpha_n\to\alpha$ almost surely.
\end{theorem}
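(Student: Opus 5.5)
The plan is to repeat the computation of Theorem~\ref{lem:fisher} without ever writing $H$ explicitly in terms of $\alpha$ and $\gamma$, working instead with $N$ itself.

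\emph{Reduction.} Since $L$ is independent of $V=\min(X,Y)$ and $K=N^{\beta}$, \eqref{eq:HKN} gives $H=N\cdot N^{\beta}=N^{a}$ with $a=1+\beta=1/\lambda$, for every $G$. Hence $H^{\lambda}=N$ and $dH=a\,N^{a-1}\,dN$. Substituting into the functional,
\[
\lambda\gamma\,T(H,\lambda,\gamma)=\int_0^\infty z\bigl(1-(1-\lambda)N(z)^{-1}\bigr)\,dH(z)
=\int_0^\infty z\bigl[aN^{a-1}-(a-1)N^{a-2}\bigr]\,dN(z).
\]

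\emph{Key algebraic step.} The bracket is exactly the derivative in $N$ of $N^{a-1}(N-1)$. The integral therefore equals $-\int_0^\infty z\,d\bigl[N^{a-1}(1-N)\bigr]$. Integrating by parts should give
\[
\lambda\gamma\,T=\int_0^\infty N(z)^{\beta}\,(1-F(z))(1-G(z))\,dz .
\]
Both boundary terms must vanish. At $0$ this holds because $N(0)=0$ and $a-1=\beta>0$. At $\infty$ it holds because $z(1-N(z))\le z(1-F(z))=z e^{-z/\alpha}\to0$, whatever $G$ is.

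\emph{Matching the denominator.} By independence,
\[
p^{(1)}=\Prob(L\le X<Y)=\int K(x)(1-G(x))\,dF(x).
\]
Here exponentiality enters, and only here: $dF=\alpha^{-1}(1-F)\,dx$. This gives
\[
p^{(1)}=\alpha^{-1}\int N^{\beta}(1-F)(1-G)\,dx .
\]
Since $\lambda\gamma=p^{(1)}$ by the definition of $\gamma$ in the statement, we get $T=\alpha$ exactly. The identity $\lambda=1/(1+\beta)$ is given in the statement. This also explains why $G$ is irrelevant: it appears only in the common integral, which cancels.

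\emph{Strong consistency.} I would follow the sketch of Theorem~\ref{th:cons}. The SLLN gives $\lambda_n\to\lambda$, $\gamma_n\to\gamma$ and $\bar Z_n\to\Ex Z$. For $\Ex Z<\infty$, note $1-H\le 1-N\le 1-F$. For the $L$-statistic, the SLLN for linear functions of order statistics (\citealp{SW86}, Chapter~19) applies once $\int_0^1u^{-\lambda\mp\varepsilon}H^{-1}(u)\,du<\infty$.

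This integrability is the step I expect to be the main obstacle, since $H$ is no longer explicit. I would bound it by comparison: $N\ge F$ implies $H^{-1}(u)=N^{-1}(u^{\lambda})\le F^{-1}(u^{\lambda})=-\alpha\log(1-u^{\lambda})$. That majorant is exactly the $\gamma=1$ exponential case already shown integrable in Theorem~\ref{th:cons}. It behaves like $\alpha u^{\lambda}$ at $0$ and logarithmically at $1$, so it is integrable against $u^{-\lambda-\varepsilon}$ for small $\varepsilon$.

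The sandwich argument for the random exponent $\lambda_n$ then goes through verbatim. The continuous mapping theorem, combined with $T=\alpha$ and Lemma~\ref{lem:phi} to identify the limit $\int u^{-\lambda}H^{-1}(u)\,du=\Ex[Z H(Z)^{-\lambda}]$, gives $\alpha_n\to\alpha$ almost surely.
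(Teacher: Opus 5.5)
Your proposal is correct and is essentially the paper's proof. It uses the same reduction $H=N^{1+\beta}$, the same integration by parts yielding $\int_0^\infty N^{\beta}(1-F)(1-G)\,dz$ with the boundary term controlled by $z(1-N(z))\le z e^{-z/\alpha}$, and the same constant-hazard step for $p^{(1)}$. The paper merely integrates by parts in the quantile variable $v=u^{\lambda}$ with $\varphi=N^{-1}$ rather than in $z$. Your majorant $H^{-1}(u)\le F^{-1}(u^{\lambda})$ is a cleaner version of its appeal to $\Ex V<\infty$ and $\varphi(v)={\rm O}(v)$.

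One slip needs fixing: $1-H\le 1-N$ is backwards, since $H=N^{1+\beta}\le N$ (indeed $Z\ge V$). Use $1-N^{1+\beta}\le(1+\beta)(1-N)\le(1+\beta)e^{-z/\alpha}$ instead, or note that your own majorant already gives $\Ex Z=\int_0^1H^{-1}(u)\,du<\infty$.
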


\begin{proof}
	Because $K=N^{\beta}$, we have $H=KN=N^{1+\beta}=N^{1/\lambda}$ irrespective of
	$G$, whence $H^{-1}(u)=N^{-1}(u^{\lambda})$. Write $\varphi=N^{-1}$ and
	$a=1/\lambda$. The substitution $v=u^{\lambda}$ gives
	\[
	\mathcal A:=\Ex\Bigl[Z\bigl(1-(1-\lambda)(H(Z))^{-\lambda}\bigr)\Bigr]
	=\frac1\lambda\int_{0}^{1}\varphi(v)\bigl[v^{a-1}-(1-\lambda)v^{a-2}\bigr]dv .
	\]
	Both integrals converge: the first because $\Ex V<\infty$, the second because
	$\varphi(v)={\rm O}(v)$ at the origin. Integrating each by parts, the terms at
	$v=0$ vanish since $\varphi(0)=0$, while those at $v=1$ combine, by
	$1/a=\lambda$ and $(1-\lambda)/(a-1)=\lambda$, into
	\[
	\lim_{v\uparrow1}\varphi(v)\Bigl[\frac{v^{a}}{a}-(1-\lambda)\frac{v^{a-1}}{a-1}\Bigr]
	=-\lambda\lim_{v\uparrow1}\varphi(v)(1-v)
	=-\lambda\lim_{t\to T_N}t\bigl(1-N(t)\bigr)=0 ;
	\]
	the last equality holds because $1-N=(1-F)(1-G)\le1-F=e^{-t/\alpha}$, so that
	however heavy the tail of $G$ may be, $t\,(1-N(t))\le t\,e^{-t/\alpha}\to0$.
	Consequently
	\begin{align*}
		\mathcal A&=\int_{0}^{1}v^{\beta}(1-v)\,d\varphi(v)
		=\int_{0}^{T_N}(N(t))^{\beta}\bigl(1-N(t)\bigr)dt\\
		&=\int_{0}^{\infty}(N(t))^{\beta}\bigl(1-F(t)\bigr)\bigl(1-G(t)\bigr)dt .
	\end{align*}
	On the other hand, by the independence of $L$, $X$ and $Y$,
	\begin{align*}
		p^{(1)}=\Prob(L\le X<Y)&=\int_{0}^{\infty}K(t)\bigl(1-G(t)\bigr)dF(t)\\
		&=\frac1\alpha\int_{0}^{\infty}(N(t))^{\beta}\bigl(1-G(t)\bigr)e^{-t/\alpha}dt=\frac{\mathcal A}{\alpha}.
	\end{align*}
	It is at this step that the exponential \emph{form} of $F$ --- that is, the
	constant hazard $dF=(1-F)\,dt/\alpha$ --- plays the decisive role.
	Exponentiality was invoked twice above (for the integrability $\Ex V<\infty$ and
	for the boundary term $t(1-N(t))\to0$), but there only moment conditions of the
	type $\Ex X<\infty$ were extracted from it, and other distributions satisfy
	those as well; at the present step nothing can replace it. This is made precise
	in Remark~\ref{rem:gfscope}. Since $\gamma=p^{(1)}/\lambda$,
	\[
	T(H,\lambda,\gamma)=\frac{\mathcal A}{\lambda\gamma}=\frac{\mathcal A}{p^{(1)}}=\alpha .
	\]
	Almost sure convergence follows as in Theorem~\ref{th:cons}, the integrability
	condition $\int_0^1u^{-\lambda}H^{-1}(u)\,du<\infty$ being a consequence of
	$\Ex V<\infty$ and $\varphi(v)={\rm O}(v)$ above.
\end{proof}

\begin{remark}[scope of the theorem]\label{rem:gfscope}
	How necessary is the assumption of exponentiality here? Carrying the
	computation of the proof through for an arbitrary $F$ with a density ---
	assuming $t(1-F(t))\to0$ so that the boundary term vanishes --- one obtains
	\begin{equation}\label{eq:gfgap}
		\mathcal A-\alpha\,p^{(1)}=\int_{0}^{\infty}(N(t))^{\beta}\bigl(1-G(t)\bigr)
		\bigl[\bigl(1-F(t)\bigr)-\alpha f(t)\bigr]dt .
	\end{equation}
	For this integral to vanish for \emph{every} $G$, the square bracket must be
	identically zero, and a one-parameter family of $G$'s already suffices to show
	it: approximating by continuous $G$'s concentrating near a point $T$, in the
	limit $N=F$ on $[0,T)$ and \eqref{eq:gfgap} becomes
	\[
	\Psi(T)=\int_{0}^{T}(F(t))^{\beta}\bigl[\bigl(1-F(t)\bigr)-\alpha f(t)\bigr]dt .
	\]
	Now $\Psi(T)=0$ for all $T>0$ is possible only if $(1-F)-\alpha f\equiv0$ (the
	factor $F^{\beta}$ destroys nothing, $F$ being positive), that is
	$f/(1-F)\equiv1/\alpha$: the hazard function is constant. Exponentiality is
	therefore not merely a sufficient condition but the \emph{exact price} of the
	requirement ``arbitrary $G$''; it is not an artefact of the method of proof.
	When $F$ is Weibull with shape $2$, for instance, $\Psi$ is not zero and even
	changes sign.
	
	This does not, however, confine $\alpha_n$ to the exponential family. By
	Proposition~\ref{prop:phclass}, on the proportional hazards class
	$1-F=\exp\{-A(x)/\alpha\}$ the hazard is exactly constant on the $A$-scale and
	the model retains its structure; Theorem~\ref{th:gfree} therefore holds verbatim
	on that whole class --- the only requirement being that the statistic be
	computed from $A(Z_i)$ rather than $Z_i$. The scale cannot be dropped: for a
	Weibull $F$ with $\alpha=1.7$ and shape $2$, replacing $A(Z)=Z^{2}$ by $Z$ makes
	$T(H,\lambda,\gamma)$ depend on $G$, and across the $6$ laws $G$ we examined by
	quadrature (including a heavy-tailed Pareto, a bounded uniform and the case of
	no censoring at all) it ranges from $0.68$ to $1.03$; with $A(Z)$ it returns
	exactly $1.7$ in every one of them.
\end{remark}

Theorem~\ref{th:gfree} strengthens Theorem~\ref{lem:fisher} (of which
$1-G=(1-F)^{\theta}$ is a special case) and appreciably changes the practical
reading of the model. What the procedure requires is the informativeness of the
mechanism that brings units \emph{into} observation; the mechanism that removes
them (loss to follow-up, withdrawal, the end of the study) may be arbitrary. The
moment of entry into observation can usually be governed by the study design,
whereas loss to follow-up cannot; this is therefore precisely the property one
would wish for. The right-censoring condition is still needed for the
semiparametric estimator \eqref{eq:Fn}, which reconstructs $F$ from
$(\lambda_n,\gamma_n)$, and it enters the variance in Theorem~\ref{th:clt}; but
it has no bearing on the consistency of $\alpha_n$.

\section{Monte Carlo study}\label{sec:mc}

\subsection{Design of the experiment}

Throughout, the exponential scale is set to $\alpha=1$; this is no loss of
generality, $\alpha$ being a scale parameter. Data are generated directly from
the model: $X_i\sim\mathrm{Exp}(1)$; since $1-G=(1-F)^{\theta}$,
$Y_i\sim\mathrm{Exp}(1/\theta)$; and from $K=N^{\beta}$,
$L_i=-\frac{1}{1+\theta}\log\bigl(1-U_i^{1/\beta}\bigr)$ with $U_i$ uniform on
$(0,1)$. Five censoring designs cover the range from light to extreme censorship
on both sides (Table~\ref{tab:designs}). For each configuration six sample sizes
$n\in\{30,50,100,300,500,1000\}$ are considered, with $M=50\,000$ Monte Carlo
replications. Samples containing no completely observed lifetime
($\sum_i\delta_i^{(1)}=0$) admit no estimate of $\alpha$; these are discarded and
their frequency is reported. This happens appreciably only in the severe design
$(\beta,\theta)=(4,2)$ for $n\le100$. All computations were carried out in
Python~3 (NumPy/SciPy) with fixed random seeds; the complete code is available
from the authors.

Before presenting the results we verified the simulation machinery itself.
(i)~\emph{The generated data conform to the model.} For each design, over
$2\cdot10^{6}$ realizations, the empirical frequencies $p_n^{(m)}$ reproduce the
theoretical values \eqref{eq:probs} to within roughly one Monte Carlo standard
error ($13$ of $15$ comparisons lie inside one standard error), while the
Kolmogorov--Smirnov distances between the empirical laws of $L$ and $Z$ and
their theoretical counterparts $K=N^{\beta}$ and $H$ from \eqref{eq:Hexp} are of
the order $(2\cdot10^{6})^{-1/2}\approx0.00071$ commensurate with the number of
realizations (at most $0.00086$). (ii)~\emph{Theorem~\ref{th:char} holds on the
	simulated data.} The three conditional laws of $Z$ given $\delta^{(m)}=1$,
$m=0,1,2$, are statistically indistinguishable: over $4\cdot10^{5}$ realizations
two-sample Kolmogorov--Smirnov tests give $p$-values from $0.10$ to $0.82$
across the five designs. (iii)~\emph{The estimator is correctly programmed.}
The vectorized rank-based code agrees with a literal two-loop transcription of
formula \eqref{eq:alphan} (in which $H_n(Z_i)$ is recomputed by direct counting):
over $90$ samples the maximum relative discrepancy is
$7.8\cdot10^{-16}$, and in the absence of left censoring it reproduces the
total-time-on-test statistic of Remark~\ref{rem:KG} to within
$2.5\cdot10^{-16}$, confirming the simplification announced there.
(iv)~\emph{The reported Monte Carlo standard errors are correct.} Splitting the
replications into $24$ independent blocks, we compared the between-block spread
of each statistic with its analytic standard error: over $16$ comparisons the
ratios run from $0.67$ to $1.21$, and since with $R=24$ blocks the ratio itself
carries a relative uncertainty of about $15\%$, $15$ of them lie within one to
two standard errors of unity.

\begin{table}[!htp]
	\centering
	\caption{Simulation designs: the censoring parameters, the corresponding model
		constants and the probabilities of the censoring pattern from \eqref{eq:probs}.
		Here $p^{(1)}$ is the expected proportion of completely observed lifetimes}
	\label{tab:designs}
	\small
	\begin{tabular}{cc cc ccc l}
		\toprule
		$\beta$ & $\theta$ & $\lambda$ & $\gamma$ & $p^{(0)}$ & $p^{(1)}$ & $p^{(2)}$ & censoring regime\\
		\midrule
		0.5 & 0.5 & 0.667 & 0.667 & 0.333 & 0.444 & 0.222 & light, two-sided\\
		1   & 1   & 0.500 & 0.500 & 0.500 & 0.250 & 0.250 & moderate, symmetric\\
		2   & 1   & 0.333 & 0.500 & 0.667 & 0.167 & 0.167 & heavy from the left\\
		1   & 2   & 0.500 & 0.333 & 0.500 & 0.167 & 0.333 & heavy from the right\\
		4   & 2   & 0.200 & 0.333 & 0.800 & 0.067 & 0.133 & extreme, two-sided\\
		\bottomrule
	\end{tabular}
\end{table}

\subsection{Bias and RMSE of the parametric estimator}\label{ssec:bias}

Table~\ref{tab:main} reports, for each configuration, the bias, standard
deviation (SD) and root-mean-square error (RMSE) of $\alpha_n$, the scaled
quantity $\sqrt n\cdot\mathrm{RMSE}$, and the bias and RMSE of the naive
estimator $\alpha_n^{\mathrm{TTT}}$ of Proposition~\ref{prop:naive}. Three
conclusions stand out. First, the bias of $\alpha_n$ decays fast --- roughly
like $n^{-1}$ (by a factor of $8$--$12$ across the five designs as $n$ goes from
$100$ to $1000$) --- and stays small relative to the SD. For $n\ge100$ the
bias/SD ratio is between $0.05$ and $0.24$ in the four milder designs, and in
the severest design $(4,2)$ it falls from $0.31$ at $n=100$ to $0.14$ at
$n=1000$; the contribution of the bias to the RMSE thus nowhere exceeds $9\%$.
Second, $\sqrt n\cdot\mathrm{RMSE}$ stabilizes in every design (at
$\approx1.28$, $1.69$, $1.96$, $2.24$ and $3.45$ respectively), a clean
numerical confirmation of the $\sqrt n$ rate --- including for the heavy designs
with $\beta\ge1$ --- and the stabilized values approach the asymptotic standard
deviation $\sigma$ of Theorem~\ref{th:clt} computed from \eqref{eq:IF}. At
$n=1000$ the simulated $\sqrt n\cdot\mathrm{SD}(\alpha_n)$ exceeds $\sigma$ by
$0.5\%$, $0.8\%$, $1.4\%$, $1.3\%$ and $4.0\%$ respectively --- \emph{positive}
in all five designs, and growing as the censoring becomes heavier. This is not
Monte Carlo noise: the standard error of a cell is below $0.3\%$ and the
deviations amount to between $2$ and $18$ standard errors. It is a
finite-sample correction that decreases with $n$: at $n=5000$ it has vanished
into Monte Carlo error in four designs (from $-0.04\%$ to $+0.53\%$, all within
$1.4$ standard errors), only the severest design $(4,2)$ retaining $1.3\%$
($3.5$ standard errors); see Table~\ref{tab:sigma}. Third, the naive TTT
estimator converges exactly to the biased limits predicted by
Proposition~\ref{prop:naive} ($+0.921$, $+2$, $+4.5$, $+2$, $+10.42$): its RMSE
is one to two orders of magnitude larger than that of $\alpha_n$ and does not
vanish as $n$ grows. Figure~\ref{Fig1} shows typical realizations at $n=300$: in
every design the parametric estimator $F(x,\alpha_n)$ follows the true df
appreciably more closely than the semiparametric step function $F_n$.

\begin{table}[!htp]
	\centering
	\caption{Monte Carlo bias, SD and RMSE of the proposed estimator $\alpha_n$ and
		of the naive estimator $\alpha_n^{\mathrm{TTT}}$ that ignores left censoring
		($\alpha=1$, $M=50\,000$ replications). Monte Carlo standard errors do not
		exceed $0.0035$ for the bias column, $0.0071$ for the RMSE column, and $0.0012$
		for every cell with $n\ge300$}
	\label{tab:main}
	\small
	\setlength{\tabcolsep}{3.8pt}
	\begin{tabular}{r rrrr rr}
		\toprule
		& \multicolumn{4}{c}{proposed $\alpha_n$ \eqref{eq:alphan}} & \multicolumn{2}{c}{naive $\alpha_n^{\mathrm{TTT}}$}\\
		\cmidrule(lr){2-5}\cmidrule(lr){6-7}
		$n$ & Bias & SD & RMSE & $\sqrt n\,$RMSE & Bias & RMSE\\
		\midrule
		\multicolumn{7}{l}{$(\beta,\theta)=(0.5;\,0.5)$}\\
		30   & 0.0497 & 0.2579 & 0.2627 & 1.439 & 1.0112 & 1.1605\\
		50   & 0.0290 & 0.1907 & 0.1929 & 1.364 & 0.9713 & 1.0551\\
		100  & 0.0157 & 0.1310 & 0.1319 & 1.319 & 0.9448 & 0.9853\\
		300  & 0.0053 & 0.0735 & 0.0737 & 1.277 & 0.9283 & 0.9414\\
		500  & 0.0030 & 0.0571 & 0.0572 & 1.278 & 0.9249 & 0.9327\\
		1000 & 0.0019 & 0.0404 & 0.0405 & 1.280 & 0.9237 & 0.9277\\
		\midrule
		\multicolumn{7}{l}{$(\beta,\theta)=(1,\,1)$}\\
		30   & 0.1295 & 0.4592 & 0.4771 & 2.613 & 2.4092 & 2.9419\\
		50   & 0.0721 & 0.2880 & 0.2969 & 2.099 & 2.2152 & 2.4410\\
		100  & 0.0362 & 0.1838 & 0.1873 & 1.873 & 2.1035 & 2.1966\\
		300  & 0.0120 & 0.0989 & 0.0996 & 1.726 & 2.0333 & 2.0611\\
		500  & 0.0064 & 0.0758 & 0.0761 & 1.702 & 2.0181 & 2.0346\\
		1000 & 0.0035 & 0.0533 & 0.0534 & 1.688 & 2.0086 & 2.0167\\
		\midrule
		\multicolumn{7}{l}{$(\beta,\theta)=(2,\,1)$\ \ (discarded samples: at $n=30$ 0.44\%)}\\
		30   & 0.2133 & 0.6190 & 0.6547 & 3.586 & 5.8550 & 7.3656\\
		50   & 0.1222 & 0.3943 & 0.4128 & 2.919 & 5.2445 & 6.0005\\
		100  & 0.0539 & 0.2224 & 0.2289 & 2.289 & 4.8059 & 5.0415\\
		300  & 0.0190 & 0.1170 & 0.1186 & 2.054 & 4.6004 & 4.6665\\
		500  & 0.0111 & 0.0886 & 0.0893 & 1.998 & 4.5582 & 4.5965\\
		1000 & 0.0056 & 0.0619 & 0.0621 & 1.964 & 4.5286 & 4.5470\\
		\midrule
		\multicolumn{7}{l}{$(\beta,\theta)=(1,\,2)$\ \ (discarded samples: at $n=30$ 0.43\%)}\\
		30   & 0.2338 & 0.7213 & 0.7582 & 4.153 & 2.7236 & 3.6450\\
		50   & 0.1358 & 0.4747 & 0.4938 & 3.492 & 2.4147 & 2.9054\\
		100  & 0.0576 & 0.2562 & 0.2626 & 2.626 & 2.1664 & 2.3235\\
		300  & 0.0191 & 0.1330 & 0.1344 & 2.328 & 2.0528 & 2.0980\\
		500  & 0.0111 & 0.1007 & 0.1013 & 2.266 & 2.0312 & 2.0576\\
		1000 & 0.0052 & 0.0706 & 0.0707 & 2.237 & 2.0135 & 2.0263\\
		\midrule
		\multicolumn{7}{l}{$(\beta,\theta)=(4,\,2)$\ \ (discarded samples: 12.71\%, 3.20\% and 0.10\% at $n=30$, $50$, $100$)}\\
		30   & 0.2201 & 0.6984 & 0.7322 & 4.011 & 12.1606 & 14.0262\\
		50   & 0.2892 & 0.7594 & 0.8126 & 5.746 & 13.8674 & 16.8476\\
		100  & 0.1822 & 0.5808 & 0.6087 & 6.087 & 12.7476 & 15.1480\\
		300  & 0.0496 & 0.2192 & 0.2247 & 3.892 & 11.0261 & 11.4137\\
		500  & 0.0277 & 0.1577 & 0.1601 & 3.581 & 10.7510 & 10.9561\\
		1000 & 0.0152 & 0.1081 & 0.1091 & 3.452 & 10.5942 & 10.6903\\
		\bottomrule
	\end{tabular}
\end{table}

In the extreme design $(4,2)$ only $6.7\%$ of the lifetimes are completely
observed, and at $n=30$ the discarding of degenerate samples ($12.7\%$ of the
replications) induces an appreciable selection effect; even so, already at
$n=300$ the RMSE of $\alpha_n$ is below $0.23$, that is, the parameter is
recovered with useful accuracy from data amounting on average to $20$ complete
observations.

Table~\ref{tab:sigma} compares the limit law of Theorem~\ref{th:clt} with
simulation. For each design the asymptotic standard deviation
$\sigma=(\Var\psi_\alpha)^{1/2}$ was computed from \eqref{eq:IF} by numerical
integration (cross-checked against a direct Monte Carlo evaluation of
$\Var\psi_\alpha$ with $1.6\cdot10^{7}$ replications, agreeing to four decimal
places) and compared with $\sqrt n\cdot\mathrm{SD}(\alpha_n)$. The agreement is
close in every design, and the empirical correlation between
$\sqrt n(\alpha_n-\alpha)$ and the linearized sum tends to one. This confirms
that \eqref{eq:IF} is the correct influence function --- including for the heavy
designs with $\beta\ge1$, in which the normal approximation sets in slowly
(Remark~\ref{rem:beta-ge-1}). The theoretical values of $\sigma$ agree to four
decimal places with the closed formula \eqref{eq:sigmaclosed}.

\begin{table}[!htp]
	\centering
	\caption{Verification of Theorem~\ref{th:clt}: the asymptotic standard deviation
		$\sigma$ obtained from the influence function \eqref{eq:IF}, compared with the
		Monte Carlo $\sqrt n\cdot\mathrm{SD}(\alpha_n)$, together with the correlation
		between $\sqrt n(\alpha_n-\alpha)$ and the linearized sum
		$n^{-1/2}\sum_i\psi_\alpha(Z_i,\Delta_i)$. The simulation uses $M=100\,000$
		replications at $n=1000$ and $M=40\,000$ at $n=5000$. Standard errors of the
		$\sqrt n\cdot\mathrm{SD}$ cells, obtained from the between-block spread, are
		below $0.3\%$ and $0.4\%$ respectively}
	\label{tab:sigma}
	\small
	\begin{tabular}{cc c cc cc}
		\toprule
		& & & \multicolumn{2}{c}{$\sqrt n\cdot\mathrm{SD}(\alpha_n)$} & \multicolumn{2}{c}{correlation}\\
		\cmidrule(lr){4-5}\cmidrule(lr){6-7}
		$(\beta,\theta)$ & $\lambda$ & $\sigma$ (theoretical) & $n=1000$ & $n=5000$ & $n=1000$ & $n=5000$\\
		\midrule
		$(0.5,0.5)$ & 0.667 & 1.2674 & 1.2740 & 1.2668 & 0.9993 & 0.9998\\
		$(1,1)$         & 0.500 & 1.6716 & 1.6843 & 1.6764 & 0.9980 & 0.9996\\
		$(2,1)$         & 0.333 & 1.9335 & 1.9613 & 1.9438 & 0.9966 & 0.9993\\
		$(1,2)$         & 0.500 & 2.1896 & 2.2181 & 2.1963 & 0.9959 & 0.9992\\
		$(4,2)$         & 0.200 & 3.2709 & 3.4022 & 3.3119 & 0.9882 & 0.9978\\
		\bottomrule
	\end{tabular}
\end{table}

Figure~\ref{Fig2}(a) plots $\mathrm{RMSE}(\alpha_n)$ against $n$ on a
double-logarithmic scale. All five designs run parallel to the reference line of
slope $-1/2$, the extreme design $(4,2)$ joining them once $n\ge100$ (below
that, the discarding of degenerate samples distorts the comparison).
Figure~\ref{Fig3} examines the normal approximation itself through normal
quantile--quantile plots of the standardized $\alpha_n$. Theorem~\ref{th:clt}
covers both designs, but the speed of convergence differs sharply. In the light
design $(\beta,\theta)=(0.5;\,0.5)$ the approximation is already excellent at
$n=100$ (skewness $+0.45$) and practically exact at $n=1000$ (skewness
$+0.14$). In the extreme design $(4,2)$ the distribution is strongly
right-skewed at $n=100$ (skewness $+3.84$) and the skewness falls to $+0.58$ at
$n=1000$: the limit law is reached, but slowly. This is the practical content of
Remark~\ref{rem:beta-ge-1}, and a caution against normal-theory confidence
intervals under heavy left censoring in small samples.

\begin{figure}[!htp]
	\begin{center}
		\includegraphics[width=\linewidth]{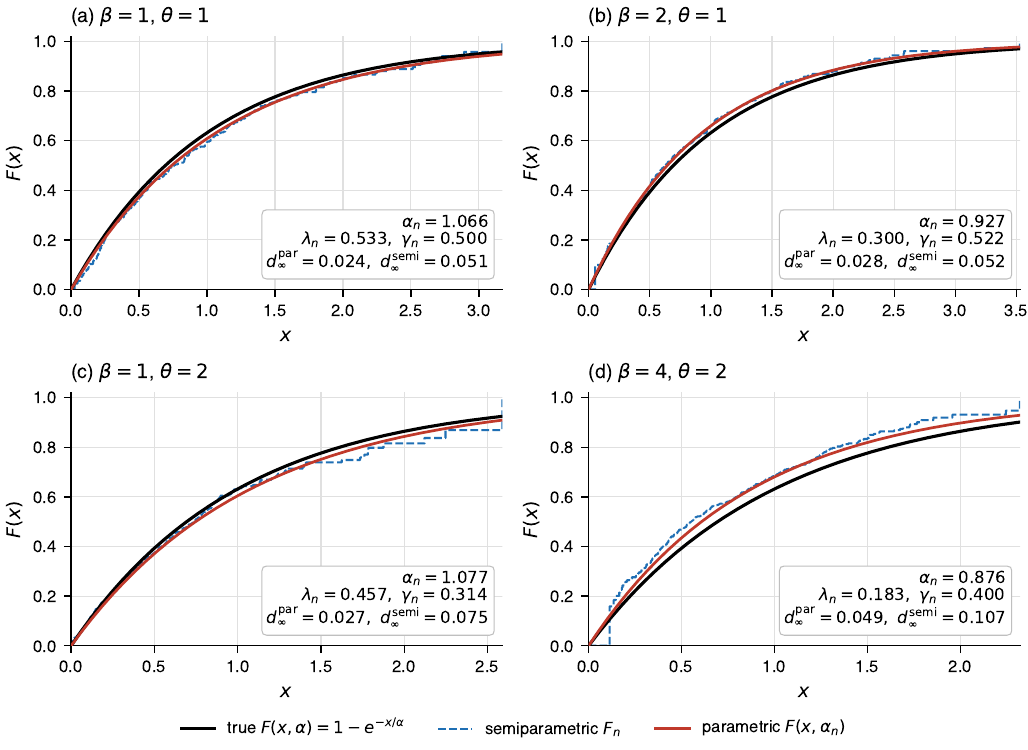}
		\caption{Estimation of the exponential df under two-sided informative
			censorship, $n=300$, $\alpha=1$. The true $F$ (solid black), the semiparametric
			estimator $F_n$ of \eqref{eq:Fn} (dashed blue) and the parametric estimator
			$F(x,\alpha_n)$ of \eqref{eq:alphan} (solid red). To avoid a favourable
			selection, each panel shows the realization attaining the \emph{median} of
			$|\alpha_n-\alpha|$ among $401$ independent replications, that is one of typical
			accuracy. The inset gives the resulting estimates and the two sup-distances
			$d_\infty=\sup_x|\widehat F(x)-F(x)|$}
		\label{Fig1}
	\end{center}
\end{figure}

\subsection{Accuracy of the estimated distribution function: parametric versus
	semiparametric}\label{ssec:df}

Table~\ref{tab:df} compares the plug-in estimator $F(\cdot,\alpha_n)$ with the
semiparametric estimator $F_n$ of \eqref{eq:Fn} in terms of the expected
sup-norm error $\Ex\sup_x|\widehat F(x)-F(x)|$ and the mean integrated squared
error $\mathrm{MISE}=\Ex\int(\widehat F-F)^2\,dF$; Figure~\ref{Fig2}(b) shows
the same comparison graphically. The parametric estimator is superior
throughout. For $n\ge300$ the sup-norm ratio runs from about $2.1$ in the
balanced designs to $3.7$ under heavy right censoring. In the severest design
$(4,2)$ it starts at $1.67$ for $n=50$ and increases with $n$ to $2.86$. The
parametric sup-norm errors agree with the limit law of Corollary~\ref{cor:df},
which predicts
$\Ex\sup_x|\Delta F|\approx e^{-1}\sigma\sqrt{2/\pi}\,n^{-1/2}$: at
$(\beta,\theta)=(0.5;0.5)$ and $n=1000$ this gives $0.01176$ against an observed
$0.01183$; at $(1,1)$ the two values are $0.01552$ and $0.01555$.

\begin{figure}[!htp]
	\begin{center}
		\includegraphics[width=\linewidth]{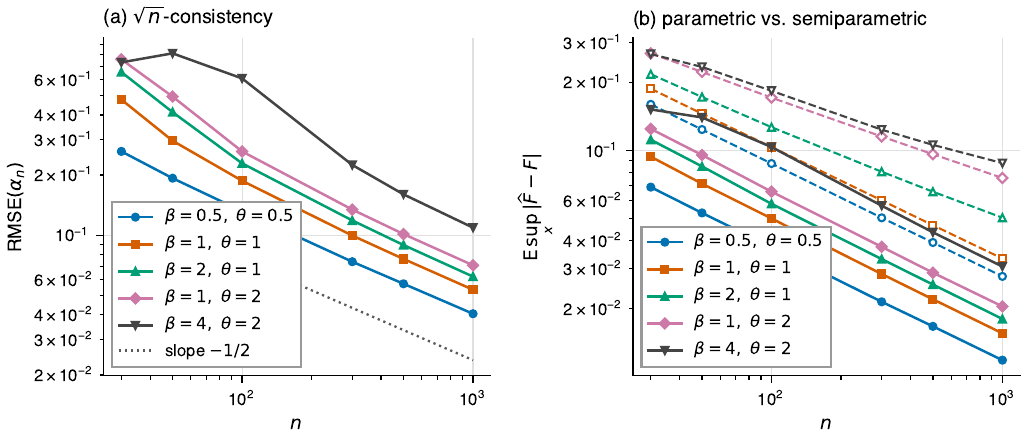}
		\caption{(a) $\mathrm{RMSE}(\alpha_n)$ against $n$ on a double-logarithmic scale
			for the five designs of Table~\ref{tab:designs}, with a reference line of slope
			$-1/2$. (b) For the same designs, the expected sup-norm error of the parametric
			estimator $F(\cdot,\alpha_n)$ (solid, filled markers) and of the semiparametric
			estimator $F_n$ (dashed, open markers)}
		\label{Fig2}
	\end{center}
\end{figure}

\begin{figure}[!htp]
	\begin{center}
		\includegraphics[width=\linewidth]{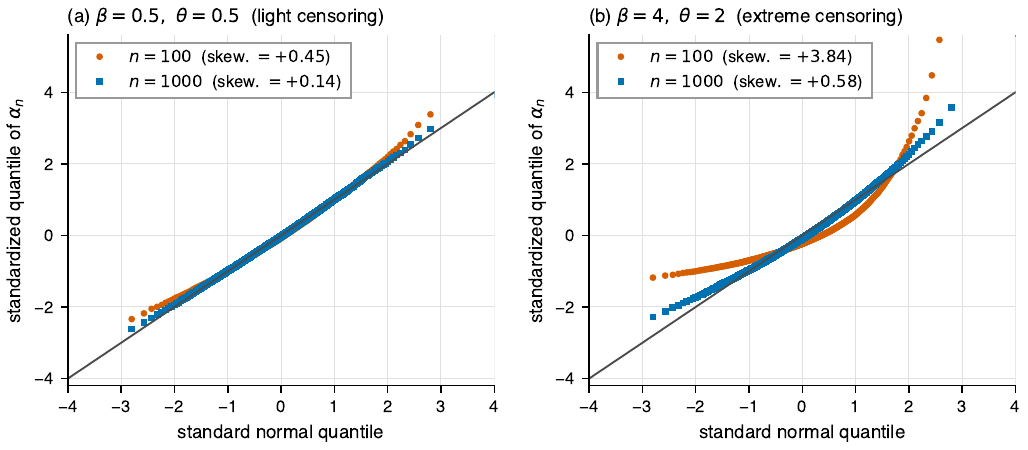}
		\caption{Normal quantile--quantile plots of $\alpha_n$ standardized to zero mean
			and unit variance ($20\,000$ replications per curve). Both designs are covered
			by Theorem~\ref{th:clt}; they differ only in the speed of convergence. Left, a
			light censoring design; right, the extreme design, in which the normal
			approximation is reached considerably more slowly}
		\label{Fig3}
	\end{center}
\end{figure}

\begin{table}[!htp]
	\centering
	\caption{Expected sup-norm error ($\times10^{2}$) and MISE ($\times10^{4}$) of
		the parametric estimator $F(\cdot,\alpha_n)$ and of the semiparametric estimator
		$F_n$, both computed on the same $M=50\,000$ samples. Ratios are
		semiparametric/parametric. Monte Carlo standard errors of the sup-norm cells do
		not exceed $0.051$}
	\label{tab:df}
	\small
	\setlength{\tabcolsep}{4.5pt}
	\begin{tabular}{cc rrr rrr}
		\toprule
		& & \multicolumn{3}{c}{$\Ex\sup_x|\widehat F-F|\times10^{2}$} & \multicolumn{3}{c}{$\mathrm{MISE}\times10^{4}$}\\
		\cmidrule(lr){3-5}\cmidrule(lr){6-8}
		$(\beta,\theta)$ & $n$ & param. & semipar. & ratio & param. & semipar. & ratio\\
		\midrule
		$(0.5,0.5)$ & 50 & 5.29 & 12.36 & 2.34 & 24.2 & 40.1 & 1.66\\
		& 100 & 3.74 & 8.75 & 2.34 & 12.0 & 19.9 & 1.65\\
		& 300 & 2.14 & 5.04 & 2.35 & 3.9 & 6.5 & 1.65\\
		& 1000 & 1.18 & 2.77 & 2.34 & 1.2 & 2.0 & 1.63\\
		\midrule
		$(1,1)$ & 50 & 7.12 & 14.52 & 2.04 & 44.8 & 61.1 & 1.36\\
		& 100 & 5.00 & 10.34 & 2.07 & 21.9 & 30.1 & 1.38\\
		& 300 & 2.84 & 6.01 & 2.11 & 7.0 & 9.6 & 1.39\\
		& 1000 & 1.55 & 3.33 & 2.14 & 2.1 & 2.9 & 1.38\\
		\midrule
		$(2,1)$ & 50 & 8.49 & 17.21 & 2.03 & 65.4 & 89.3 & 1.37\\
		& 100 & 5.80 & 12.66 & 2.18 & 29.8 & 43.3 & 1.45\\
		& 300 & 3.31 & 8.05 & 2.43 & 9.5 & 14.3 & 1.50\\
		& 1000 & 1.80 & 5.04 & 2.80 & 2.8 & 4.3 & 1.54\\
		\midrule
		$(1,2)$ & 50 & 9.54 & 22.20 & 2.33 & 82.0 & 109.6 & 1.34\\
		& 100 & 6.57 & 17.09 & 2.60 & 37.8 & 56.2 & 1.49\\
		& 300 & 3.75 & 11.51 & 3.07 & 12.1 & 19.6 & 1.62\\
		& 1000 & 2.04 & 7.55 & 3.69 & 3.6 & 6.2 & 1.71\\
		\midrule
		$(4,2)$ & 50 & 13.98 & 23.36 & 1.67 & 168.6 & 189.8 & 1.13\\
		& 100 & 10.35 & 18.32 & 1.77 & 99.0 & 113.5 & 1.15\\
		& 300 & 5.70 & 12.34 & 2.17 & 28.7 & 38.0 & 1.32\\
		& 1000 & 3.07 & 8.78 & 2.86 & 8.2 & 12.2 & 1.49\\
		\bottomrule
	\end{tabular}
\end{table}

\section{Robustness under misspecification}\label{sec:robust}

The efficiency documented above is bought at the price of two assumptions: the
exponentiality of $F$ and the informative structure \eqref{eq:model}. This
section quantifies the cost of violating each of them, and shows that for the
departures met in practice that cost is not large.

\subsection{Pseudo-true parameter analysis when the lifetime law is
	misspecified}\label{ssec:pseudo}

Let the censoring mechanism still obey \eqref{eq:model}, but let the true
lifetime df $F_0$ \emph{not} be exponential. Since Theorem~\ref{th:char} depends
on \eqref{eq:model} alone, the constants $\lambda,\gamma$ and their estimators
are unchanged, and the argument of Theorem~\ref{th:cons} shows that
\[
\alpha_n\ \xrightarrow{a.s.}\ \alpha^*=T\bigl(H_0,\lambda,\gamma\bigr)
=\frac{1}{\lambda\gamma}\int_{0}^{1}F_0^{-1}\Bigl(1-\bigl(1-u^{\lambda}\bigr)^{\gamma}\Bigr)\bigl(1-(1-\lambda)\,u^{-\lambda}\bigr)du,
\]
where $H_0=[1-(1-F_0)^{1+\theta}]^{1+\beta}$. In the quasi-maximum-likelihood
terminology of \citet{Wh82}, $\alpha^*$ is the pseudo-true value and
$F(\cdot,\alpha^*)$ is the best exponential approximation to $F_0$ selected by
the estimating equation. As measures of quality we use $|\alpha^*-\Ex X|$
together with the approximation distances
$d_\infty(\alpha^*)=\sup_x|F(x,\alpha^*)-F_0(x)|$ and
$\mathrm{MISE}^*=\int(F(x,\alpha^*)-F_0)^2dF_0$. Moreover, by the triangle
inequality and Corollary~\ref{cor:df},
\begin{equation}
	\sup_x\bigl|F(x,\alpha_n)-F_0(x)\bigr|\le d_\infty(\alpha^*)+\frac{e^{-1}}{\alpha^*}\,|\alpha_n-\alpha^*|\,(1+o_{\Prob}(1)),
\end{equation}
so that the total error splits into a deterministic (small) approximation term
and a stochastic term of order $n^{-1/2}$.

Table~\ref{tab:pseudo} evaluates these quantities by numerical integration for
the design $(\beta,\theta)=(1,1)$ and for Weibull and gamma alternatives
normalized to unit mean. Two features deserve notice. First, the pseudo-true
value is remarkably close to the mean lifetime: for shape departures of
$\pm10\%$, $|\alpha^*-1|<0.9\%$, and even at shapes $0.8$ and $1.2$ it is
$\le2.6\%$ --- the exact Fisher consistency of Theorem~\ref{lem:fisher}
guarantees $\alpha^*(1)=1$ \emph{exactly} in both families, but the order of the
response to a shape error depends on the family. For the Weibull normalized to
unit mean the first-order term vanishes and the effect is quadratic
($\alpha^*-1\approx-0.96\,(k-1)^2$), whereas for the gamma it does not vanish
and the effect is linear, though with a small slope
($\alpha^*-1\approx-0.076\,(k-1)$). In both cases a $10\%$ shape error moves
$\alpha^*$ by less than one per cent. Second, the irreducible approximation
distance $d_\infty(\alpha^*)$ is below $0.04$ for shape departures of $\pm10\%$
--- a level comparable with the \emph{stochastic} error of the semiparametric
estimator at $n\approx750$.

\begin{table}[!htp]
	\centering
	\caption{Pseudo-true parameter and best-exponential approximation distances for
		misspecified lifetime laws (unit mean, censoring design $\beta=\theta=1$)}
	\label{tab:pseudo}
	\small
	\begin{tabular}{l cccc}
		\toprule
		true $F_0$ & shape & $\alpha^*$ & $d_\infty(\alpha^*)$ & $\mathrm{MISE}^*\times10^4$\\
		\midrule
		Weibull & 0.80 & 0.9747 & 0.0794 & 26.4\\
		Weibull & 0.90 & 0.9945 & 0.0379 & 6.0\\
		Weibull & 0.95 & 0.9987 & 0.0185 & 1.4\\
		Weibull & 1.05 & 0.9989 & 0.0178 & 1.3\\
		Weibull & 1.10 & 0.9958 & 0.0350 & 5.2\\
		Weibull & 1.20 & 0.9850 & 0.0676 & 19.5\\
		Gamma   & 0.90 & 1.0063 & 0.0247 & 2.5\\
		Gamma   & 1.10 & 0.9915 & 0.0226 & 2.1\\
		\bottomrule
	\end{tabular}
\end{table}

\subsection{Finite-sample robustness and comparison with the semiparametric
	estimator}

Table~\ref{tab:robust} reports, for the most relevant alternatives (Weibull and
gamma with shapes $0.9$ and $1.1$), the empirical bias and RMSE of $\alpha_n$
\emph{relative to the pseudo-true value $\alpha^*$}, together with the expected
sup-norm errors of $F(\cdot,\alpha_n)$ and of the semiparametric estimator $F_n$
(which is correctly specified, assuming no parametric form). Both are measured
against the true $F_0$, with $M=20\,000$ replications and design $(1,1)$. The
convergence $\alpha_n\to\alpha^*$ predicted by quasi-likelihood theory is
clearly visible (the bias column vanishes, the RMSE column decays like
$n^{-1/2}$). More important for practice is that the \emph{misspecified
	parametric estimator beats the correctly specified semiparametric one} in
sup-norm up to sample sizes between $500$ and $1000$ for $\pm10\%$ Weibull
departures, and between $1000$ and $2000$ for gamma departures. Only beyond
those sample sizes does the irreducible approximation bias $d_\infty(\alpha^*)$
overtake the advantage in variance. Since both estimators are computed on the
same samples, the comparison is paired, and the last column of
Table~\ref{tab:robust} shows that the advantage before the crossing --- and the
deficit after it --- lies well outside Monte Carlo error: the paired
$z$-statistics range from $11$ to $303$ in absolute value. Figure~\ref{Fig4}
plots the two error curves together with the asymptotic lower bound
$d_\infty(\alpha^*)$, making the crossing explicit.

\begin{table}[!htp]
	\centering
	\caption{Robustness study, design $\beta=\theta=1$, $M=20\,000$ replications
		($10\,000$ at $n=2000$): bias and RMSE of $\alpha_n$ relative to the pseudo-true
		$\alpha^{*}$, and expected sup-norm errors relative to the true $F_0$
		($\times10^{2}$), the columns headed ``param.'' and ``semipar.'' referring to
		$F(\cdot,\alpha_n)$ and $F_n$ respectively. The last column is the \emph{paired}
		mean difference between
		the two estimators, computed replication by replication; a positive value means
		the parametric estimator is closer to $F_0$. Monte Carlo standard errors do not
		exceed $0.0020$ (bias), $0.0033$ (RMSE) and $0.026\times10^{-2}$ (paired
		difference); pairing makes the variance of the difference much smaller, since
		both estimators are computed on the same samples. Every cell of the last column
		lies between $11$ and $303$ standard errors from zero}
	\label{tab:robust}
	\footnotesize
	\setlength{\tabcolsep}{3.5pt}
	\begin{tabular}{l r rr rr r}
		\toprule
		& & \multicolumn{2}{c}{$\alpha_n$ vs.\ $\alpha^{*}$} & \multicolumn{3}{c}{$\Ex\sup|\widehat F-F_0|\times10^{2}$}\\
		\cmidrule(lr){3-4}\cmidrule(lr){5-7}
		true $F_0$ & $n$ & Bias & RMSE & param. & semipar. & paired\\
		\midrule
		Weibull(0.9) & 50 & $+0.0633$ & 0.2906 & 8.28 & 14.52 & $+6.24$\\
		& 100 & $+0.0316$ & 0.1851 & 6.44 & 10.30 & $+3.86$\\
		& 300 & $+0.0098$ & 0.1017 & 4.81 & 6.01 & $+1.20$\\
		& 500 & $+0.0066$ & 0.0770 & 4.39 & 4.67 & $+0.28$\\
		& 1000 & $+0.0028$ & 0.0535 & 4.03 & 3.31 & $-0.72$\\
		& 2000 & $+0.0020$ & 0.0387 & 3.90 & 2.38 & $-1.52$\\
		\midrule
		Weibull(1.1) & 50 & $+0.0800$ & 0.2927 & 8.26 & 14.48 & $+6.22$\\
		& 100 & $+0.0381$ & 0.1841 & 6.38 & 10.35 & $+3.97$\\
		& 300 & $+0.0124$ & 0.0989 & 4.61 & 6.02 & $+1.41$\\
		& 500 & $+0.0068$ & 0.0753 & 4.16 & 4.68 & $+0.52$\\
		& 1000 & $+0.0042$ & 0.0523 & 3.76 & 3.32 & $-0.44$\\
		& 2000 & $+0.0022$ & 0.0369 & 3.59 & 2.36 & $-1.22$\\
		\midrule
		Gamma(0.9) & 50 & $+0.0664$ & 0.2935 & 7.62 & 14.56 & $+6.94$\\
		& 100 & $+0.0315$ & 0.1852 & 5.61 & 10.31 & $+4.70$\\
		& 300 & $+0.0113$ & 0.1013 & 3.83 & 6.02 & $+2.18$\\
		& 500 & $+0.0056$ & 0.0774 & 3.32 & 4.69 & $+1.37$\\
		& 1000 & $+0.0035$ & 0.0537 & 2.89 & 3.31 & $+0.43$\\
		& 2000 & $+0.0020$ & 0.0383 & 2.66 & 2.37 & $-0.29$\\
		\midrule
		Gamma(1.1) & 50 & $+0.0740$ & 0.2910 & 7.66 & 14.54 & $+6.88$\\
		& 100 & $+0.0380$ & 0.1832 & 5.62 & 10.31 & $+4.70$\\
		& 300 & $+0.0128$ & 0.0985 & 3.71 & 6.01 & $+2.30$\\
		& 500 & $+0.0076$ & 0.0750 & 3.18 & 4.68 & $+1.51$\\
		& 1000 & $+0.0040$ & 0.0531 & 2.72 & 3.33 & $+0.61$\\
		& 2000 & $+0.0020$ & 0.0373 & 2.46 & 2.37 & $-0.08$\\
		\bottomrule
	\end{tabular}
\end{table}

\begin{figure}[!htp]
	\begin{center}
		\includegraphics[width=\linewidth]{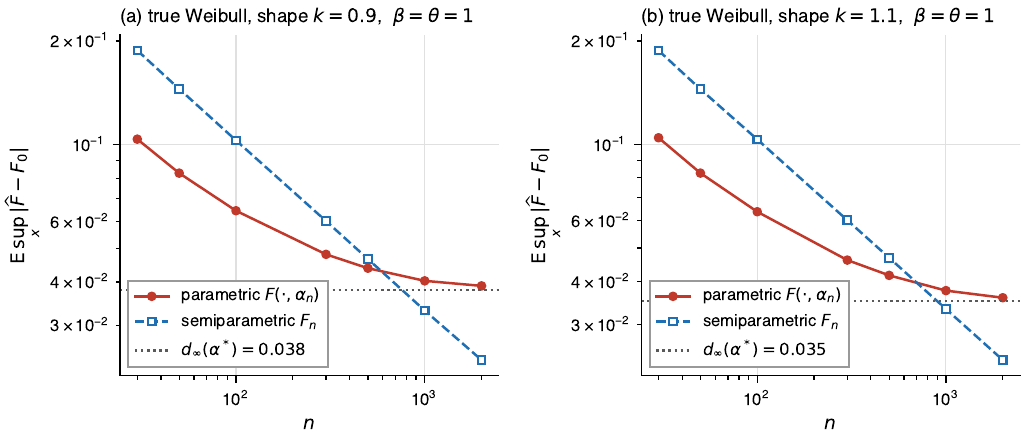}
		\caption{Expected sup-norm error against the true $F_0$ under a misspecified
			lifetime law, design $\beta=\theta=1$: the parametric estimator
			$F(\cdot,\alpha_n)$ (solid) and the semiparametric estimator $F_n$ (dashed),
			with the irreducible approximation distance $d_\infty(\alpha^*)$ shown as a
			horizontal line. The misspecified parametric estimator is the more accurate of
			the two until the two curves cross}
		\label{Fig4}
	\end{center}
\end{figure}

\subsection{Violation of the informativeness condition}\label{ssec:kviol}

The structural condition $K=N^{\beta}$ is of a different nature. It is precisely
what makes the censoring \emph{informative}, and by Theorem~\ref{th:char} it is
\emph{testable}, since the independence of $Z_i$ and $\Delta_i$ can be checked
by standard tests (compare \citealp{Ab94, Cs89}). One qualification is in order.
Theorem~\ref{th:char} makes independence equivalent to the \emph{whole} of
\eqref{eq:model}, not to $K=N^{\beta}$ alone. An independence test therefore
examines both halves at once, and a rejection does not indicate which half has
failed. This matters for $\alpha_n$, because by Theorem~\ref{th:gfree} only the
entry half affects its consistency; in that sense the diagnostic is
conservative, a point we return to in Section~\ref{ssec:gviol}.

To quantify the sensitivity of $\alpha_n$ we violated the informative
left-censoring law: with probability $1-\varepsilon$ the entry time $L_i$ is
drawn from the informative law $K=N$ ($\beta=1$), and with probability
$\varepsilon$ from a uniform law on $(0;0.797)$ calibrated so that the depth of
left censoring $p^{(0)}=1/2$ is unchanged ($X\sim\mathrm{Exp}(1)$, $\theta=1$).
Table~\ref{tab:contam} shows that the resulting asymptotic bias is almost linear
in the degree of contamination, $\alpha^{\dagger}-\alpha\approx-0.21\,\varepsilon$:
a contamination of $5$--$10\%$ produces an asymptotic bias of only $1$--$2\%$,
negligible against the sampling error at $n=300$ (RMSE $\approx0.10$). Even the
harshest violations considered --- $L$ entirely uniform, or Weibull with shape
$2$, at a left-censoring depth of $50\%$ --- give the limits $0.786$ and
$0.743$, that is errors of $21$--$26\%$: appreciable, but bounded and, for a
sufficiently large sample, \emph{detectable} in advance by the independence
diagnostic of Theorem~\ref{th:char}.

\begin{table}[!htp]
	\centering
	\caption{Violation of the informativeness condition ($X\sim\mathrm{Exp}(1)$,
		$\theta=1$, the depth of left censoring held at $p^{(0)}=1/2$): asymptotic
		(pseudo-limit) bias of $\alpha_n$ and its finite-sample behaviour at $n=300$
		($M=20\,000$ replications; Monte Carlo standard errors at most $0.0007$)}
	\label{tab:contam}
	\small
	\begin{tabular}{c ccc}
		\toprule
		contamination $\varepsilon$ & asymptotic bias & bias at $n=300$ & RMSE at $n=300$\\
		\midrule
		0    & $+0.0006$ & $+0.0118$ & 0.0995\\
		0.05 & $-0.0099$ & $-0.0003$ & 0.0976\\
		0.10 & $-0.0202$ & $-0.0086$ & 0.0981\\
		0.25 & $-0.0509$ & $-0.0397$ & 0.1031\\
		0.50 & $-0.1035$ & $-0.0939$ & 0.1312\\
		1.00 & $-0.2138$ & $-0.2034$ & 0.2193\\
		\bottomrule
	\end{tabular}
\end{table}

\emph{How reliable is the diagnostic?} The claim that the condition is
``testable'' has practical content only if the test has adequate power, so we
measured it. Table~\ref{tab:power} gives the rejection frequency at level
$0.05$ against the contamination alternative above. Both tests hold their size
correctly (the rows $\varepsilon=0$), but at small sample sizes the power is
low: at $n=30$ the coarsest violation ($\varepsilon=1$), which produces a
$-21\%$ bias, is detected in only a quarter of cases, and a $-10\%$ bias
($\varepsilon=0.5$) in fewer than one case in ten. From $n\ge300$ the power
becomes satisfactory for serious violations ($0.55$ at $\varepsilon=0.5$ and
$0.996$ at $\varepsilon=1$ for the Kruskal--Wallis test). Mild contamination
($\varepsilon=0.25$) may pass unnoticed, but the $-5\%$ bias it produces is
smaller than the sampling error at $n=300$ (RMSE $\approx0.10$) --- that is, the
power of the test is directed exactly at the violations that matter.

Two practical conclusions follow. (i) A non-rejection by the diagnostic is not
evidence that the model holds when $n\lesssim100$. For such samples the
condition $K=N^{\beta}$ must rest on substantive grounds --- usually on the fact
that the entry procedure is governed by the study design --- and not on the
diagnostic. (ii) When all three groups are non-empty the Kruskal--Wallis test is
appreciably more powerful than the correlation test (for instance $0.748$
against $0.506$ at $n=100$, $\varepsilon=1$), and we recommend it. When
$\beta=0$ (the 6-MP data of Section~\ref{sec:data}) the third group is empty and
the correlation test remains the natural choice.

The figures in Table~\ref{tab:power} should be read as a \emph{lower bound}.
Both tests are omnibus, examining the whole of \eqref{eq:model} at once. The
criterion developed in \citet{AM26k} splits the hypothesis into two separately
testable halves and provides a statistic aimed \emph{specifically} at the entry
half; against the same contamination alternative it is appreciably more
powerful --- a power of $0.61$ at $n=200$ and $\varepsilon=0.5$, against $0.40$
for Kruskal--Wallis on the same samples. It also indicates which half of
\eqref{eq:model} the rejection concerns, which by Theorem~\ref{th:gfree} is a
decisive distinction for $\alpha_n$.

\begin{table}[!htp]
	\centering
	\caption{Power of the independence diagnostic of Theorem~\ref{th:char}:
		rejection frequency at level $0.05$ against the contamination alternative of
		Table~\ref{tab:contam} ($M=4000$ replications, $999$ permutations for the
		permutation test). The rows $\varepsilon=0$ give the actual size of the test.
		Monte Carlo standard errors are at most $0.008$}
	\label{tab:power}
	\small
	\begin{tabular}{c ccccc}
		\toprule
		contamination $\varepsilon$ & $n=30$ & $n=50$ & $n=100$ & $n=300$ & $n=1000$\\
		\midrule
		\multicolumn{6}{l}{\emph{(a) correlation permutation test $|r(Z,\delta^{(1)})|$}}\\
		0    & 0.046 & 0.056 & 0.050 & 0.052 & 0.055\\
		0.25 & 0.064 & 0.075 & 0.080 & 0.126 & 0.263\\
		0.50 & 0.099 & 0.116 & 0.151 & 0.332 & 0.765\\
		1.00 & 0.234 & 0.332 & 0.506 & 0.902 & 1.000\\
		\midrule
		\multicolumn{6}{l}{\emph{(b) Kruskal--Wallis: $Z\mid\delta^{(m)}=1$, $m=0,1,2$}}\\
		0    & 0.043 & 0.046 & 0.047 & 0.050 & 0.048\\
		0.25 & 0.059 & 0.066 & 0.097 & 0.164 & 0.438\\
		0.50 & 0.096 & 0.129 & 0.211 & 0.548 & 0.976\\
		1.00 & 0.281 & 0.440 & 0.748 & 0.996 & 1.000\\
		\bottomrule
	\end{tabular}
\end{table}

\subsection{Violation of the right-censoring condition}\label{ssec:gviol}

Theorem~\ref{th:gfree} predicts that replacing $G$ by an arbitrary law leaves
$\alpha_n$ consistent. We checked this by simulation. Keeping
$X\sim\mathrm{Exp}(1)$ and $K=N^{\beta}$ ($\beta=1$) exactly, we drew $Y$ from
laws that are \emph{not} powers of $1-F$, each calibrated so that
$\Prob(X<Y)=1/2$ as in the design $\theta=1$. Estimating the limit of
$\alpha_n$ from a single sample of $3\cdot10^{6}$ observations gives $1.0009$
for uniform $G$, $1.0004$ for Weibull $G$ with shape $2$ and $1.0006$ for
lognormal $G$. A systematic $\varepsilon$-contamination of $G$ by a uniform law
gives limits between $0.9991$ and $1.0006$ for every $\varepsilon$ in $[0,1]$.
All of these agree with $\alpha=1$ to within Monte Carlo error --- in sharp
contrast with the asymptotic biases of $21$--$26\%$ produced in
Section~\ref{ssec:kviol} by violating the \emph{left} condition.

Here Theorems~\ref{th:char} and~\ref{th:gfree} may appear to contradict one
another, and the appearance should be dispelled. When the right half is
violated, \eqref{eq:model} does not hold in full, so by Theorem~\ref{th:char}
$Z$ and $\Delta$ \emph{must} be dependent; and yet Theorem~\ref{th:gfree} leaves
$\alpha_n$ consistent. Both facts are visible side by side on one sample. At
$n=4\cdot10^{6}$, for the three alternative laws $G$ above, the largest
between-group difference of $\Ex[Z\mid\Delta]$ is $0.056$--$0.211$, that is
$72$--$402$ times its own Monte Carlo standard error: the dependence \emph{is}
there, and it is not small in any statistical sense, only small in magnitude. On
those very samples $\alpha_n$ departs from the target by at most $0.0013$. The
reason there is no contradiction is visible in the proof of
Theorem~\ref{th:gfree}: it never invokes $Z\perp\Delta$, but integrates the
joint law of $(L,X,Y)$ directly, and what it requires of that law is a single
moment identity rather than the whole independence structure. The estimator asks
less of the model than the model asserts, and therefore survives when the model
fails.

One consequence deserves emphasis for practice. The diagnostic of
Theorem~\ref{th:char} tests the \emph{whole} of \eqref{eq:model}, and a wrong
$G$ does genuinely destroy the independence of $Z$ and $\Delta$. In the
experiment described above the permutation test rejects for the uniform
alternative ($p=0.005$ at $n=300$) but not for the Weibull and lognormal
alternatives ($p=0.77$ in each). A rejection therefore does not by itself
\emph{invalidate} $\alpha_n$: one must go on to ask \emph{which} half of
\eqref{eq:model} has failed, since only the entry half matters. In that sense
the diagnostic is conservative.

The price paid for a wrong $G$ is in the variance alone. At $n=300$ with
$M=20\,000$, the RMSE of $\alpha_n$ rises from $0.0991$ under the model to
$0.106$, $0.109$ and $0.120$ for the lognormal, uniform and Weibull
alternatives respectively --- in the worst case an increase of about $21\%$ in
RMSE, with no bias. Together with Section~\ref{ssec:kviol} this yields a clear
and asymmetric picture: \emph{the entry mechanism must be informative; the
	mechanism of exit from observation need not be.}

\section{Applications to real data}\label{sec:data}

\subsection{Maintenance therapy with 6-MP in acute leukaemia (the boundary case
	$\beta=0$)}\label{ssec:sixmp}

The remission-duration data of the 6-mercaptopurine (6-MP) arm of the clinical
trial of \citet{Fr63}, popularized by \citet{Ge65}, are a classical benchmark of
right-censored survival analysis \citep{KM03}: $n=21$ patients with remission
lengths (in weeks)
\[
\begin{array}{l}
	6,\;6,\;6,\;6^{+},\;7,\;9^{+},\;10,\;10^{+},\;11^{+},\;13,\;16,\;17^{+},\\
	19^{+},\;20^{+},\;22,\;23,\;25^{+},\;32^{+},\;32^{+},\;34^{+},\;35^{+} .
\end{array}
\]
($^{+}$ denotes censoring; $9$ relapses and $12$ censored). There is no left
censoring here, so the model applies with $\beta=0$, $\lambda_n=1$ and
$\gamma_n=9/21=0.429$, and by Remark~\ref{rem:KG} the estimator
\eqref{eq:alphan} reduces to the total time on test:
\[
\alpha_n=\frac{359}{9}=39.89\ \text{weeks},
\]
reproducing exactly the classical censored-exponential analysis of these data.
The implied median remission is $\alpha_n\log2=27.7$ weeks against a
Kaplan--Meier median of $23$ weeks. The fitted exponential survival curve agrees
with the Kaplan--Meier estimator to within $0.05$ at every event time but the
last (at $t=23$ the Kaplan--Meier risk set is nearly exhausted and the
discrepancy is $0.11$). The informativeness diagnostic of
Theorem~\ref{th:char} --- a permutation test based on the sample correlation
between $Z_i$ and $\delta_i$ ($r=-0.44$) --- gives $p=0.046$: for these data the
Koziol--Green hypothesis is borderline, a known property \citep{CF98}. For
$\alpha_n$, however, this is immaterial. Here $\beta=0$, so the entry condition
$K=N^{\beta}$ holds trivially ($K\equiv1$) and consistency is preserved
\emph{whatever} the right-censoring law: the borderline Koziol--Green $p$-value
concerns the semiparametric estimator \eqref{eq:Fn}, not \eqref{eq:alphan}.
Since $\beta=0$ is the boundary of the range $\beta>0$ stated in
Theorem~\ref{th:gfree}, we show this directly: by the constant hazard
$f=(1-F)/\alpha$, for arbitrary $G$,
\[
\Prob(X<Y)=\int_{0}^{\infty}\bigl(1-G\bigr)f\,dt
=\frac1\alpha\int_{0}^{\infty}\bigl(1-F\bigr)\bigl(1-G\bigr)dt
=\frac{\Ex\min(X,Y)}{\alpha},
\]
so that $T=\Ex\min(X,Y)/\Prob(X<Y)=\alpha$. This is the classical fact that,
under independent right censoring, the exponential maximum likelihood estimator
requires no condition on $G$ whatever; Theorem~\ref{th:gfree} generalizes it to
$\beta>0$. The exponential fit itself also remains satisfactory.

\subsection{Insulating-fluid breakdown data under artificial two-sided
	censorship}\label{ssec:fluid}

Nelson's accelerated test of insulating fluid \citep{Ne72} gives $n=19$ times
(in minutes) to dielectric breakdown at $34\,$kV:
\[
\begin{array}{l}
	0.19,\ 0.78,\ 0.96,\ 1.31,\ 2.78,\ 3.16,\ 4.15,\ 4.67,\ 4.85,\ 6.50,\\
	7.35,\ 8.01,\ 8.27,\ 12.06,\ 31.75,\ 32.52,\ 33.91,\ 36.71,\ 72.89 .
\end{array}
\]
Engineering theory suggests an exponential law of breakdown at fixed voltage,
and these data are a standard textbook illustration of exponential inference
\citep{La03}. The maximum likelihood estimator from the complete data is
$\widehat\alpha=\bar x=14.36$ minutes. A Weibull fit gives shape
$\widehat k=0.77$ (with $95\%$ \emph{profile-likelihood} interval
$[0.53;\ 1.06]$; at $n=19$ the likelihood is appreciably skewed in $k$, so a
Wald interval is unreliable here), and a likelihood ratio test of exponentiality
is not rejected ($p=0.116$), although the point estimate suggests the mild
departure regime studied in Section~\ref{sec:robust}. Because the
Kolmogorov--Smirnov statistic is computed against an \emph{estimated}
exponential, its null law is non-standard; a Lilliefors-type parametric
bootstrap gives $p=0.040$ --- rejection at the $5\%$ level, whereas the Weibull
likelihood ratio test does not reject. The disagreement between the two tests
shows that the exponential shape is borderline here, and it is the reason why
below the target of the estimator is taken to be the pseudo-true $\alpha^{*}$
rather than $\widehat\alpha$.

Since the complete sample is available, we can impose \emph{artificial}
two-sided informative censorship with known ground truth: holding the $19$
actual breakdown times fixed, we generate $Y_i$ and $L_i$ from \eqref{eq:model}
(taking $\widehat\alpha$ for $\alpha$), form the censored sample
$(Z_i,\Delta_i)$ and compute $\alpha_n$; this is repeated $5\cdot10^{4}$ times
for each design. Two benchmarks are relevant here and should not be confused.
The complete-data estimate $\widehat\alpha=14.36$ is the value one would report
without censoring; but since the breakdown times are not exactly exponential,
the quantity the estimator actually targets is the pseudo-true value
$\alpha^{*}$ of Section~\ref{ssec:pseudo} for the empirical df of the $19$
observations, which depends on the design and is given in
Table~\ref{tab:fluid}. These differ across the four designs by $-4.8\%$,
$+7.8\%$, $-23.6\%$ and $-7.1\%$ --- a direct manifestation of the mild
Weibull-type departure quantified in Section~\ref{sec:robust}, amplified when
the design emphasizes the right tail.

Measured against the appropriate target $\alpha^{*}$, the median of $\alpha_n$
attains an accuracy of $2.7\%$, $4.0\%$ and $6.6\%$ in the three moderate
designs and $10.0\%$ in the extreme design $(4,2)$, where $82\%$ of the
observations are left-censored and only $8\%$ of the breakdown times are
observed exactly, so that $92\%$ are incomplete. The mean of $\alpha_n$ lies
everywhere well above the median: at $n=19$ the sampling distribution is
strongly right-skewed --- exactly as Figure~\ref{Fig3}(b) documents for heavy
censoring at small $n$ --- so at this sample size the median is the appropriate
summary. The sampling variability is appreciable --- as it must be with $n=19$
and up to $92\%$ incompleteness --- but the location of the estimator sits
firmly on the value it is designed to estimate.

\begin{table}[!htp]
	\centering
	\caption{Artificial two-sided informative censorship imposed on the $19$
		insulating-fluid breakdown times (complete-data $\widehat\alpha=14.36$;
		$5\cdot10^{4}$ censoring replications per design). Here $\alpha^{*}$ is the
		pseudo-true target of Section~\ref{ssec:pseudo} for the empirical df of the $19$
		observations, and $\bar p^{(0)},\bar p^{(1)}$ are the means of the realized
		censoring pattern. Monte Carlo standard errors of the mean and median are at
		most $0.05$. Replications with no complete observation are discarded (their
		proportions are $0.6\%$, $7.0\%$, $1.8\%$ and $34.4\%$ respectively)}
	\label{tab:fluid}
	\small
	\begin{tabular}{cc c ccc c}
		\toprule
		$(\beta,\theta)$ & $\bar p^{(0)}$ / $\bar p^{(1)}$ & $\alpha^{*}$ & mean$(\alpha_n)$ & median$(\alpha_n)$ & SD$(\alpha_n)$ & $|$median$-\alpha^{*}|/\alpha^{*}$\\
		\midrule
		$(1,1)$ & 0.56 / 0.22 & 13.67 & 15.89 & 14.04 & 7.72 & 2.7\%\\
		$(2,1)$ & 0.72 / 0.13 & 15.47 & 18.75 & 16.09 & 9.64 & 4.0\%\\
		$(1,2)$ & 0.55 / 0.18 & 10.98 & 13.84 & 11.71 & 7.70 & 6.6\%\\
		$(4,2)$ & 0.82 / 0.08 & 13.34 & 13.67 & 12.01 & 7.10 & 10.0\%\\
		\bottomrule
	\end{tabular}
\end{table}

\subsection{Genuinely left-censored data: AIDS in intravenous drug
	users}\label{ssec:ivaids}

In both examples above left censoring was either absent ($\beta=0$) or imposed
artificially. In the following data set it is \emph{genuine}. Studied by
\citet{JG11} and distributed through the package of \citet{dblcens}, these data
concern $232$ intravenous drug users enrolled in a detoxification programme in
Badalona, Spain; $Z$ is the time from first injection to a diagnosis of AIDS.
Of these, $14$ are \emph{left}-censored (having died of AIDS without a prior
diagnosis), $82$ are completely observed and $136$ are right-censored, so that
$p^{(0)}=6.0\%$, $\widehat\beta=0.064$ and $\widehat\theta=1.66$.

\emph{First, the condition is tested.} The criterion of \citet{AM26k} splits
\eqref{eq:model} into two separately testable halves, and under permutation
calibration the outcome is clear-cut: the entry half ($K=N^{\beta}$) is not
rejected ($p\in[0.24;\,0.42]$ for all three statistics), whereas the
right-censoring half ($1-G=(1-F)^{\theta}$) is decisively rejected
($p\le0.0008$). The first conclusion must be read with care: since there are
only $14$ left-censored observations, the test of the entry half has low power
--- in this configuration ($n=232$, $p^{(0)}=6.0\%$) it rejects with probability
only $0.62$ even when the entry law is replaced \emph{entirely}, and with
probability $0.19$ under half contamination. A non-rejection therefore does not
confirm the entry model; it only indicates the absence of evidence against it.
The rejection, by contrast, is expected and interpretable: right censoring here
is administrative --- the end of the observation period --- and there is no
reason for it to be a power of the survival function; indeed the median of the
complete observations is $10.4$ years against $13.0$ years for the
right-censored ones, whereas the Koziol--Green condition requires $Z$ and
$\delta^{(1)}$ to be independent.

\emph{It is here that the value of a split diagnosis becomes apparent.} A
monolithic test would say only ``the model is rejected'' and the analysis would
stop there. By Theorem~\ref{th:gfree}, however, only the entry half is needed
for the estimator \eqref{eq:alphan}: whatever $G$ may be, $\alpha_n$ remains
exactly Fisher-consistent and strongly consistent. The semiparametric estimator
\eqref{eq:Fn}, by contrast, rests on the representation
$1-F=(1-H^{\lambda})^{\gamma}$ and requires \emph{both} halves --- so on these
data it is invalid. A rejected model therefore does not halt the analysis; it
indicates which estimator may be used.

The estimator gives $\alpha_n=27.5$ years, whereas the naive TTT estimator that
ignores left censoring gives $32.7$ years --- higher, exactly as
Proposition~\ref{prop:naive} predicts. The quantitative agreement is
approximate: at the depth $\widehat\beta=0.064$ the proposition predicts an
inflation by a factor of $1.11$ ($30.5$ years) against an observed factor of
$1.19$. The discrepancy is expected, since Proposition~\ref{prop:naive} requires
both halves of \eqref{eq:model} and an exponential $F$, neither of which holds
here. The ten-year AIDS-free probability comes out as $0.695$; the nonparametric
analysis of \citet{JG11} reports about $0.70$.

\emph{Exponentiality, on the other hand, is not supported.} The nonparametric
maximum likelihood estimator for doubly censored data (a self-consistency
algorithm) puts the median at $15.5$ years --- in agreement with the value of
\citet{JG11} --- but at a sup-norm distance of $0.126$ from the fitted
exponential df. Passing to the family $A(x)=x^{k}$ of
Proposition~\ref{prop:phclass} reduces this distance to $0.052$ at
$k\approx1.65$ (scale $21.0$ years), which points to an increasing hazard --- as
one would expect for HIV$\to$AIDS progression. These data therefore demonstrate
at once the value of Theorem~\ref{th:gfree} and the necessity of
Section~\ref{ssec:general}: there is no evidence against the entry half --- a
statement that must be read together with the power caveat above --- and in that
situation the estimator retains its force; but the exponential shape of the
lifetime law, the departure measured in Section~\ref{sec:robust}, is not
acceptable here, and $\alpha_n$ should be read as the pseudo-true parameter of
Section~\ref{ssec:pseudo}.

\section{Discussion}\label{sec:disc}

The results assembled above single out the estimator \eqref{eq:alphan} --- at
once practical and theoretically grounded --- as a tool for lifetime estimation
under simultaneous left and right censorship, and they make clear where its
advantages come from.

\textbf{Closed form and computational transparency.} Unlike nonparametric
maximum likelihood for doubly censored data, which requires iterative
self-consistency algorithms, and unlike parametric likelihoods under
non-informative two-sided censoring, which require numerical maximization,
$\alpha_n$ is an explicit $L$-statistic computed in a single pass over the
ordered sample. This simplicity is supplied precisely by the informative
structure \eqref{eq:model}, which condenses all the information about the
censoring mechanism into the two frequencies $p_n^{(0)},p_n^{(1)}$. The
convenience is almost free: as Section~\ref{ssec:eff} shows, the asymptotic
variance of $\alpha_n$ lies only $0.6\%$--$1.4\%$ above the information bound of
the model in the designs considered ($98.6\%$--$99.4\%$ efficiency), the loss
over a wider sweep does not exceed $3.8\%$, and it tends to zero in the limit of
vanishing left censoring. Full numerical maximization would add almost nothing.

\textbf{Efficiency.} Within the model, $\alpha_n$ is exactly Fisher-consistent
(Theorem~\ref{lem:fisher}), $\sqrt n$-consistent in every design we examined,
and asymptotically normal with explicit closed-form constants for all
$\beta,\theta>0$ (Theorem~\ref{th:clt}); and its plug-in df estimator is
superior to the semiparametric power-type estimator and, under informativeness,
to product-limit-type estimators \citep{Cs88, CF98, Pa99} --- better by a factor
of $2.1$--$3.7$ in expected sup-norm for $n\ge300$ (at least $1.7$ at smaller
sizes; Table~\ref{tab:df}). The asymptotic theory moreover gives an accurate
description already at moderate $n$: the influence function \eqref{eq:IF}
reproduces the simulated $\sqrt n\cdot\mathrm{SD}(\alpha_n)$ to within
$0.5$--$4\%$ at $n=1000$ (Table~\ref{tab:sigma}). The deviation is positive in
all five designs and grows with the depth of censoring --- a systematic
finite-sample excess that nonetheless vanishes with $n$: at $n=5000$ it has
disappeared into Monte Carlo error in four designs (from $-0.04\%$ to
$+0.53\%$, all within $1.4$ standard errors), remaining at $1.3\%$ ($3.5$
standard errors) only in the severest design $(4,2)$. Corollary~\ref{cor:df}
predicts the observed sup-norm errors at $n=1000$ to a relative accuracy of
$0.6\%$ in four designs and $1.0\%$ in the severest design $(4,2)$. Remarkably,
this accuracy also holds for the heavy designs with $\beta\ge1$, in which the
normal approximation sets in slowly.

\textbf{The scope of the method is exact.} The closed form is not an accident of
the exponential. Section~\ref{ssec:general} derives the likelihood equation for
an arbitrary family (Proposition~\ref{prop:general}) and shows that the weight
$1-(1-\lambda)H^{-\lambda}$ in \eqref{eq:alphan} comes from the model, not from
the family. The equation admits an explicit solution \emph{precisely} on the
proportional hazards class (Propositions~\ref{prop:phclass}
and~\ref{prop:converse}), and within that class every result reported here ---
Fisher consistency, asymptotic normality, \eqref{eq:sigmaclosed}, the
efficiency, Theorem~\ref{th:gfree} --- is preserved verbatim at every $n$,
because the distribution of the \emph{transformed} observation $A(Z)$ coincides
with that of $Z$ in the exponential case $A(x)=x$. This must be read both ways:
it is an extension on the one hand and a limitation on the other, since $A$ is
required to be known exactly and outside the class there is no explicit solution
(Remark~\ref{rem:family}).

\textbf{The two-sided model is not gratuitous.} Proposition~\ref{prop:naive}
shows that ignoring informative left censoring inflates the total-time-on-test
estimator by the factor $(1+\beta)[\psi(2+\beta)-\psi(1)]$ --- already a
threefold error at $\beta=1$, and independent of the depth of right censoring.
The simulations reproduce the predicted bias at $n=1000$ to a relative accuracy
of $0.3\%$--$1.7\%$, the deviation decreasing with $n$. The freedom from
$\theta$ is directly visible: in both designs with $\beta=1$ the simulated limit
is $3.01$. One-sided methodology is therefore not merely suboptimal but
inconsistent in a two-sided informative setting.

\textbf{Robustness profile.} The conditions of the model fail in sharply
different ways, and our analysis quantifies each. The most useful finding is the
asymmetry inside \eqref{eq:model}: by Theorem~\ref{th:gfree} the right-censoring
half $1-G=(1-F)^{\theta}$ is never used by $\alpha_n$, so an arbitrary mechanism
of exit from observation inflates the RMSE by at most $21\%$ and produces no
bias at all (Section~\ref{ssec:gviol}), whereas a violation of the entry half
$K=N^{\beta}$ does bias the estimator (Section~\ref{ssec:kviol}). Since the
moment of entry into observation is usually governed by the study design while
loss to follow-up is not, the condition that matters is precisely the one the
investigator can secure. Misspecification of the \emph{lifetime law} is also
mild: the pseudo-true parameter departs very little from the mean lifetime
(to second order for the Weibull, linearly but with a small slope for the
gamma), the irreducible approximation distance stays below $0.04$ for $\pm10\%$
Weibull/gamma shape departures, and the parametric estimator remains
\emph{superior to the correctly specified semiparametric one} up to sample sizes
of $500$--$2000$ (Table~\ref{tab:robust}, Figure~\ref{Fig4}) --- which covers
the sample sizes at which a parametric analysis is realistically contemplated.
Misspecification of the entry mechanism is more serious (coarse violations at a
censoring depth of $50\%$ give asymptotic biases of $21$--$26\%$), but it
responds linearly and mildly to contamination ($\approx-0.21\varepsilon$) and
can be checked before use through the characterization of
Theorem~\ref{th:char}, as illustrated on the 6-MP data. The limits of that
protection are equally clear: by Table~\ref{tab:power} the power of the
diagnostic depends appreciably on the size of the violation --- at $n=300$ a
coarse violation ($\varepsilon=1$) is detected almost surely (power $1.00$)
while a mild one ($\varepsilon=0.25$) is largely missed (power $0.16$), and at
$n\lesssim100$ the power is low against violations of any size. For small and
moderate samples the entry condition must therefore rest on the study design
rather than on the diagnostic. We regard this combination --- graceful
degradation together with a built-in diagnostic --- as the principal practical
safeguard of the model.

\textbf{Limitations and extensions.} Three directions remain open. First, the
\emph{rate} of the normal approximation under heavy left censoring:
Theorem~\ref{th:clt} gives the limit law for all $\beta>0$, but Figure~\ref{Fig3}
shows that convergence slows appreciably as $\lambda$ decreases (skewness
$+3.84$ at $n=100$ for $\lambda=0.2$). A Berry--Esseen-type bound or an
Edgeworth correction would allow normal-theory confidence intervals to be
adjusted in small samples --- such a bound is already available in the one-sided
case \citep{CL87} --- and the identity of Lemma~\ref{lem:phi} appears a natural
starting point for such an analysis as well. Second, although the scope of the
construction \eqref{eq:alphan} is settled in Section~\ref{ssec:general},
families outside it remain open: the gamma and the lognormal --- even with known
shape --- do not belong to the proportional hazards class, since their
cumulative hazard cannot be written as $B(\vartheta)A(x)$, so for them numerical
maximization is unavoidable; for the Weibull with unknown shape a profile step
remains, with the inner step explicit. Third, the covariate and competing-risks
versions of the model \citep{Ab98} call for a similar parametric treatment.

\section{Conclusion}\label{sec:concl}

This paper has developed and studied a closed-form pseudo-MLE of the exponential
scale parameter in a model of informative random censorship acting from both
sides. The estimator is exactly Fisher-consistent, and remains so whatever the
right-censoring law --- so that only the entry mechanism need be informative ---
strongly consistent at every censoring depth, and asymptotically normal at all
those depths with explicit constants in polygamma form. The naive estimator that
ignores left censoring was shown to carry an explicit asymptotic bias,
independent of $\theta$, which renders it useless. The scope of the method was
determined analytically: an explicit solution exists precisely on the
proportional hazards class (Propositions~\ref{prop:phclass}
and~\ref{prop:converse}), and within that class every result reported here is
preserved verbatim at each $n$. An extensive Monte Carlo programme --- five
censoring designs, sample sizes from $30$ to $5000$, with diagnostics for bias,
RMSE, rate and normality --- confirmed the theory quantitatively. The estimator
operates within $98.6\%$--$99.4\%$ of the information bound of the model in the
designs considered and no worse than $96.2\%$ over a wider sweep; it reproduced
the asymptotic standard deviation to within $0.5\%$--$4.0\%$ at $n=1000$ and
better than $1.3\%$ at $n=5000$; and it showed a uniform advantage over the
semiparametric power-type estimator in sup-norm, reaching a factor of $3.7$
under heavy right censoring. The robustness analysis established that the
estimator degrades very slowly under Weibull and gamma departures of the
lifetime law --- remaining superior to the correctly specified semiparametric
alternative up to $n\approx500$--$2000$ even with a $10\%$ shape error, that is
at the sizes for which a parametric analysis is realistically contemplated ---
that it responds only linearly and mildly to violations of the informativeness
condition, and that it comes equipped with a diagnostic that allows
\eqref{eq:model} to be tested on the data through Theorem~\ref{th:char}, albeit
one that examines both halves of the model together and has low power in small
samples. The applications to the 6-MP leukaemia trial and to Nelson's
insulating-fluid experiment illustrate, respectively, the exact reduction of the
procedure to the classical censored-exponential analysis in the absence of left
censoring, and the recovery of a known ground truth under severe artificial
two-sided censorship. The third application --- the AIDS data on intravenous
drug users, the one example in which left censoring is \emph{genuine} ---
displays the practical value of the theory directly: the diagnostic rejects the
right-censoring half, which by Theorem~\ref{th:gfree} does not affect
$\alpha_n$, while the exponential shape of the lifetime law proves untenable and
calls for the proportional hazards extension of Section~\ref{ssec:general}.
Taken together, these results single out the two-sided informative censorship
model with exponential lifetime as possessing a rare combination of
solvability, efficiency and testable conditions, and its parametric estimator as
a construction that unites those three properties in a way one-sided and
non-informative alternatives do not.

\bmhead{Acknowledgements}
	The authors are deeply grateful to their teacher A.~A.~Abdushukurov, who
	introduced them to the two-sided informative model of random censorship and
	whose guidance and discussions shaped this work.

\section*{Statements and Declarations}

\paragraph{Competing interests.}
The authors have no competing interests to declare that are relevant to the
content of this article.

\paragraph{Funding.}
The authors did not receive support from any organization for the submitted
work.

\paragraph{Data availability.}
The three data sets analysed in this study are already in the public domain: the
6-MP leukaemia remission data are reproduced in full in
Section~\ref{ssec:sixmp} and originate from \citet{Fr63} and \citet{Ge65}; the
insulating-fluid breakdown times are reproduced in full in
Section~\ref{ssec:fluid} and originate from \citet{Ne72}; and the AIDS data on
intravenous drug users are distributed with the {\tt dblcens} package
\citep{dblcens} and were analysed by \citet{JG11}. No new data were generated.

\paragraph{Code availability.}
The code that generates every table and figure of this paper --- the Monte Carlo
programs, the numerical evaluation of the information bound and of the
pseudo-true parameter, and the scripts that reproduce the three applications ---
is available from the corresponding author upon reasonable request. All random
seeds are fixed, so that the reported figures are reproducible exactly.

\paragraph{Author contributions.}
All authors contributed to the conception and design of the study. D.~R.~Mansurov developed the theory and wrote the first draft; S.~B.~Bozorov carried out the Monte Carlo study and the numerical verification; A.~B.~Oltiyev performed the
analyses of the real data sets. All authors read, revised and approved the final manuscript.

\paragraph{Ethics approval.}
Not applicable: the study involves no new research on humans or animals, and
uses only previously published, anonymized data.


\makeatletter\if@filesw\immediate\write\@auxout{\string\citation{snay}}\fi\makeatother
\bibliography{references}

\end{document}